\newtheorem{theorem}{Theorem}
\newtheorem{lemma}{Lemma}[section]
\newtheorem{proposition}{Proposition}[section]
\newtheorem{remark}{Remark}[section]
\newdefinition{definition}{Definition}[section]
\newproof{proof}{Proof}
\newdefinition{assumption}{Assumption}[section]
\numberwithin{equation}{section}
\begin{document}
\begin{frontmatter}


\title{Global viscosity solutions to Lorentzian eikonal equation on globally hyperbolic space-times}


\author[SyZ]{Siyao Zhu}
\ead{zsydtc0807@gmail.com}

\author[HgW]{Hongguang Wu}
\ead{whg@cczu.edu.com}

\author[XC]{Xiaojun Cui\corref{cor}}
\ead{xcui@nju.edu.cn}

\address[SyZ]{Department of Mathematics, Nanjing University, Nanjing 210093, P.R. China}

\address[HgW]{Department of Mathematics, Changzhou University, Changzhou 213164, P.R. China}

\address[XC]{Department of Mathematics, Nanjing University, Nanjing 210093, P.R. China}

\cortext[cor]{Corresponding author.}

\journal{}

\begin{abstract}
In this paper, we show that any globally hyperbolic space-time admits at least one globally defined locally semiconcave function, which is a viscosity solution to the Lorentzian eikonal equation. According to whether the time orientation is changed, we divide the set of viscosity solutions into some subclasses. We show if the time orientation is consistent, then a viscosity solution has a variational representation locally. As a result, such a viscosity solution is locally semiconcave and has some weak KAM properties, as the one in the Riemannian case. On the other hand, if the time orientation of a viscosity solution is non-consistent, it will exhibit some peculiar properties which make this kind of viscosity solutions are totally different from the ones in the Riemannian case.
\end{abstract}

\begin{keyword}
Global viscosity solution \sep
Lorentzian eikonal equation \sep
weak KAM theory
\end{keyword}
\end{frontmatter}

\section{Introduction}

  Since the landmark work of Penrose \cite{Penrose1964, Penrose1965a}, the study of Lorentzian causality theory has attracted many researchers in recent decades. In this theory, Lorentzian geodesic flow is a central research object. Eikonal equation, which can be regarded as a special Hamilton-Jacobi equation, plays an important role in the study of Lorentzian geodesic flow. Actually, if the eikonal equation admits a smooth solution, the integral curves to the gradient field are geodesics, e.g. \cite[Lemma B.1]{Beem 1996}. Unfortunately, smooth solutions for such partial differential equations are usually absent. At this moment, viscosity solutions introduced by Crandall and Lions \cite{Lions1983TAMS} show powerful advantages than other weak solutions (e.g., Viscosity solutions usually have better well-posedness in Dirichlet problem of partial differential equations).

  On the other hand, a field closely related to viscosity solutions to the Hamilton-Jacobi equation induced by a Tonelli Lagrangian is the so-called weak KAM theory. Weak KAM theory, introduced by Fathi \cite{FathiWKAM}, establishes a connection between Aubry-Mather theory for a Tonelli Lagrangian and viscosity solution theory of the associated Hamilton-Jacobi equation. Informally, backward action-minimizing curves, the main object in Aubry-Mather theory, are exactly calibrated curves of some viscosity solutions. In other words, viscosity solutions are nothing but the weak KAM solutions of backward type. This can give an explicit variational representation for global viscosity solutions to the Hamilton-Jacobi equation.  Such a variational representation can provide us a geometrical intuition for the gradient dynamics of the viscosity solutions.  For some recent works on weak KAM theory, Lorentzian geodesic flows and Lorentzian Aubry-Mather theory, we refer interested readers to \cite{CuiIJM2016, Beem 1996, CuiJMP2014, JinGD2018, ERDGTG2003, Flores Mams 2013, SuhrJFTA2019, SuhrMM2022} and the references therein.
  Inspired by these works, it is interesting to construct a space-time counterpart of weak KAM theory for viscosity solutions to the eikonal equation. However, classical weak KAM theory depends heavily on the positive-definiteness of the Lagrangian. On space-time, non-positive-definiteness of Lorentzian metric makes problem complicated and challenging. Furthermore, non-completeness is a common phenomenon on a space-time, it makes things worse.

  As far as we know, there are only a few works on this topic.  Cui and Jin \cite{CuiJMP2014} study space-time with the regular cosmologcial time function $\tau$ and show that $-\tau$ is a viscosity solution to Lorenzian eikonal equation. Jin and Cui \cite{JinGD2018} establish the existence of global viscosity solution to Lorentzian eikonal equation on the Abelian cover of a class A Lorentzian 2-torus. In \cite{SuhrCMP2018}, Bernard and Suhr propose a cone field theory for generalized space-time and establish the equivalence between global hyperbolicity and the existence of a steep temporal function, where the steep temproal function is a smooth (of course, viscosity) subsolution to Lorentzian eikonal equation in our setting. It is not surprising that viscosity solutions on a Riemannian manifold and space-time may have different properties. For example, in the Riemannian case, viscosity solutions are always locally semiconcave, but even on the 2-dimensional Minkowski space-time ($\mathbb{R}^2, dy^2-dx^2$), a viscosity solution may not be locally semiconcave. Actually, it is easy to check that $f(x,y)=|x|$ is a viscosity solution to the eikonal equation on the 2-dimensional Minkowski space-time. Obviously, $f$ is not locally semiconcave in any neighbourhood of $x=0$. In this paper, we will give a much more complete theory on viscosity solutions of eikonal equation on a globally hyperbolic space-time. More precisely, we classify the set of viscosity solutions to the Lorentzian eikonal equation according to whether the time orientation of the solution is consistent. We show that a viscosity solution has a (local) variational representation when the time orientation is consistent. Such a variational representation ensures that the viscosity solution has the similar properties as in the Riemannian case, for instance, local semiconcavity and some properties of weak KAM type. We also study the properties of viscosity solution when the time orientation is non-consistent. In this case, viscosity solutions exhibit many peculiar properties, which are totally different from the ones in the Riemannian case.

\section{Preliminaries and main results}
  We will use classical terminology as in \cite{Beem 1996, Flores Mams 2013, CuiIJM2016, CuiJMP2014, JinGD2018}. Let $(M, g)$ be a space-time (i.e., a connected, time-oriented, smooth Lorentzian manifold with  the Lorentzian metric $g$), where the signature for the Lorentzian metric is $(-,+, \cdots , +)$. A point $p\in M$ is usually called an event from the viewpoint of general relativity. For each $p\in M$, let $T_p M$ be the tangent space at $p$ and $TM$ be the tangent bundle of $M$. A tangent vector $V \in T_p M$ is called timelike, spacelike or lightlike if $g(V, V)<0$, $g(V, V)>0$ or $g(V, V)=0$ ($V\neq 0$), respectively. Also, a tangent vector $V\in T_p M$ is called causal if $V$ is either timelike or lightlike and is called non-spacelike if $g(V, V)\leq 0$. The set of null vectors consists of the zero vector and lightlike vectors. The time-orientation of $(M,g)$ ensures that $M$ admits a continuous, nowhere vanishing, timelike vector field $X$. $X$ is used to separate the causal vectors at each base point into two classes which are called past-directed and future-directed respectively. A causal tangent vector $V\in T_p M$ is said to be past-directed (respectively, future-directed) if $g(X(p), V)>0$ (respectively, $g(X(p), V)<0$). Then a continuous piecewise $C^1$ curve $\gamma: I\rightarrow M$ ($I\subset \mathbb{R}$ is an interval of the real line) is called causal, timelike, lightlike and past-directed or future-directed if the tangent vector $\dot{\gamma}(s)$ is causal, timelike, lightlike and past-directed or future-directed at every differentiable point $\gamma(s)$.

    For two events $p, q\in M$, if there exists a future-directed timelike (respectively, future-directed causal or constant) curve from $p$ to $q$, we say $p, q$ are chronologically related (respectively, causally related) and denote by $p\ll q$ (respectively, $p\leq q$). For a given $p\in M$, define chronological past $I^-(p)$, chronological future $I^+(p)$, causal past $J^-(p)$, causal future $J^+(p)$ as follows:
\begin{align*}
I^-(p)=\{q\in M: q\ll p\},\\
I^+(p)=\{q\in M: p\ll q\},\\
J^-(p)=\{q\in M: q\leq p\},\\
J^+(p)=\{q\in M: p\leq q\}.
\end{align*}
Analogously, for subsets $A\subseteq M$, we can define $I^-[A]$, $I^+[A]$, $J^-[A]$ and $J^+[A]$. For example, $I^-[A]:=\{q\in M| q\ll s ~~\text{for some}~~ s\in A\}$. We call a subset $A$ of $M$ is an achronal (respectively, acausal) set if no two points of $A$ are chronologically related (respectively, causally related ).

\begin{definition}[\protect{\cite{Beem 1996, BernalCQG2007}}]\label{Def10}
 A space-time $(M, g)$ is said to be causal if there are no causal loops. A space-time $(M, g)$ is globally hyperbolic if it is causal and the sets $J^+(p)\cap J^-(q)$ are compact for all $p, q\in M$.
\end{definition}

Throughout this paper, we always assume that $(M, g)$ is a globally hyperbolic space-time. We are going to use two metrics on $M$, one is the Lorentzian metric $g$, the other is an auxiliary complete Riemannian metric $h$. The length functionals associated to $g$ and $h$ are denoted by $L(\cdot)$ and $L_h(\cdot)$ respectively. Let $\Omega_{x,y}$ denote the path space of all future-directed causal curves and constant curves $\gamma:[0, 1]\rightarrow M$ with $\gamma(0)=x$ and $\gamma(1)=y$. For a causal curve $\gamma\in \Omega_{x,y}$, the length of $\gamma$ is given by
\begin{align*}
L(\gamma):=\int_0^1 \sqrt{-g(\dot{\gamma}(s), \dot{\gamma}(s))}ds.
\end{align*}

Correspondingly the distance functions associated to $g$ and $h$ are denoted by $d(\cdot, \cdot)$ and $d_h(\cdot, \cdot)$ respectively. More precisely, for any $x, y\in M$ with $x\leq y$, $d(x, y)$ is given by
\begin{align*}
d(x, y)=\sup\limits_{\gamma}\{L(\gamma):\gamma \in \Omega_{x, y}\}.
\end{align*}

The norms associated to $g$ and $h$ are denoted by $|\cdot|$ and $|\cdot|_h$ respectively. Recall that for a non-spacelike vector $V\in TM$,  $|V|=\sqrt{-g(V,V)}$. Let $\nabla$ (respectively, $\nabla_h$) be the gradient induced by the Lorentzian metric $g$ (respectively, Riemannian metric $h$). There is a timelike  Lorentzian eikonal equation associated to the Lorentzian metric $g$:
\begin{equation}\label{E1}
                 g(\nabla u, \nabla u)= -1.
\end{equation}
In this paper, we devote ourself to studying viscosity solutions of equation \eqref{E1}. Before stating our results, we need to introduce some more definitions for convenience. The next two definitions are about to generalized gradients and viscosity solutions to eikonal equation (\ref{E1}).

\begin{definition}[\protect{\cite[Definition 3.1.6]{CannarsaPNDE2004}}]\label{Def3}
Let $f: M\rightarrow \mathbb{R}$ be a continuous function on $(M, g)$. A vector $V \in T_pM$ is called to be a subgradient (resp., supergradient) of $f$ at $p\in M$, if there exists a neighborhood $O$ of $p$ and a $C^1$ function $\phi: O \rightarrow R$ such that $f(p)=\phi(p)$, $\phi(x) \leq f(x)$ (resp., $\phi(x) \geq f(x)$) for every $x \in O$ and $\nabla \phi(p)= V$.
\end{definition}

  We denote by $\nabla^- f(p)$ the set of subgradients of $f$ at $p$, $\nabla^+ f(p)$ the set of supergradients of $f$ at $p$. With the help of $\nabla^- f(p)$ and $\nabla^+ f(p)$, we can state that the definition of viscosity solution as follows:

\begin{definition}\label{Def4}
A continuous function $f$ is called a viscosity subsolution of equation (\ref{E1}) if for any $p\in M$,
\[
             g(V, V)\leq -1 ~~\mbox{for every}~~ V\in \nabla^+ f(p).
\]
Similarly, $f$ is called a viscosity supersolution of equation (\ref{E1}) if for any $p\in M$,
\[
             g(V, V)\geq -1 ~~\mbox{for every}~~ V\in \nabla^- f(p).
\]

A continuous function $f$ is a viscosity solution of equation (\ref{E1}) if it is a viscosity subsolution and a viscosity
supersolution simultaneously.
\end{definition}

Let $Lip_{loc}(M)$ be the set of locally Lipschitz (with respect to the Riemannian metric $h$) functions on $M$ and
\[
\mathcal{S}(M):=\{f\in Lip_{loc}(M)| f \mbox{ is global viscosity solution to equation (\ref{E1})}\} .
\]

  In this paper, $\mathcal{S}(M)$ is the main object that we want to study systematically. Concretely, we expect to get a classification of $\mathcal{S}(M)$ and give a variational representation of elements in $\mathcal{S}(M)$. For this purpose, we need the following definitions.

\begin{definition}[\protect{\cite[Definition 2.8]{CuiJMP2014}}]\label{Def6}
Let $\phi: M\rightarrow \mathbb{R}$ be a locally Lipschitz function defined on the space-time $(M, g)$, then a vector $V\in T_p M$ is called a limiting gradient if there exists a sequence $p_k \subset M\setminus\{p\}$ with $\lim\limits_{k\rightarrow\infty}p_k=p$ such that $\phi$ is differentiable at $p_k$ for each $k\in\mathbb{N}$, and $\lim\limits_{k\rightarrow\infty} \nabla \phi(p_k)=V$. Here, the first limit is taken in the sense of manifold topology on $M$; the second limit is taken in the sense of any fixed chart that contains $p$. Since the first limit is taken, we know that when $k$ is sufficiently large, $p_k$ goes into that chart. The second limit does not depend on the choice of chart. We denote $\nabla^*\phi(p)$ to be the set of all limiting gradients of $\phi$ at $p$.
\end{definition}

   It should be mentioned that for any $f\in \mathcal{S}(M)$, if there exists a sequence $\{x_i\}_i\subseteq M\backslash \{x\}$ such that $x_i\rightarrow x$ as $i\rightarrow\infty$ and $f$ is differentiable at each $x_i$, then for sufficiently large $i$, $|\nabla_h f(x_i)|_h$ is bounded, where the bound depends on the Lipschitz constant of $f$ near $x$. Let $\kappa$ be the map such that $\kappa (\nabla_h g(\cdot))=\nabla g(\cdot)$ for any smooth function $g$ on $M$. Obviously, $\kappa$ is a linear transformation on each fiber of $TM$ and $|\kappa|_h$ is locally bounded, where $|\kappa|_h=\sup\limits_{V\in TM, V\neq 0}\frac{|\kappa (V)|_h}{|V|_h}$ is the operator norm with respect to $h$. Thus, the set $\{\nabla f(x_i)\}_i$ is contained in a compact subset of $TM$. More precisely, $\lim\limits_{i\rightarrow\infty}\nabla f(x_i)\in \nabla^*f(x)$.


\begin{definition}\label{Def9}
For a Lipschitz continuous function $f$ on $(M, g)$ where $\nabla f$ is timelike as long as it exists,
\begin{itemize}
\item We say the time orientation of $f$ is future-directed at $x$ if any limiting gradient $V\in \nabla^* f(x)$ is  future-directed timelike, the time orientation of $f$ is past-directed at $x$ if any limiting gradient $V\in \nabla^* f(x)$ is past-directed timelike.
\item We say the time orientation of $f$ changes at $x$ if there exist two timelike vectors $V, W$ in $\nabla^* f(x)$ such that $V$ is past-directed and $W$ is future-directed.
\item We say the time orientation of $f$ is consistent if for any two points $x, y\in M$, the vectors in both $\nabla^*f(x)$ and $\nabla^*f(y)$ have the same time orientation. Otherwise, we say the time orientation of $f$ is non-consistent.
\end{itemize}
\end{definition}

\begin{definition}[Forward calibrated curve]\label{Def12}
For any $u\in\mathcal{S}(M)$, a future-directed timelike curve $\alpha:[0,t]\rightarrow M$ ($t>0$) is said to be a forward calibrated curve of $u$ if $u(\alpha(0))=u(\alpha(s))-s$ for any $s\in [0,t]$.
\end{definition}

\begin{definition}[Partial Cauchy Surface]\label{Def10}
Let $A\subseteq M$ be an achronal set, the $\text{edge}(A)$ consists of all points $p\in \bar{A}$, such that every neighborhood $U$ of $p$ contains a timelike curve form $I^-(p, U)$ to $I^+(p, U)$ which does not meet $A$. Where $I^-(p, U)$ (respectively, $I^+(p, U)$) denotes the chronological past (respectively, future) of $p$ with respect to the space-time $(U, g|_U)$. The set A is said to be edgeless if $\text{edge}(A)=\emptyset$. A subset $S$ of $(M, g)$ is said to be a partial Cauchy surface if $S$ is acausal and edgeless in $M$.
\end{definition}


Here and in the following, for any $u\in \mathcal{S}(M)$ and $s\in \mathbb{R}$, the level set of $u$ and the  image set of $u$ are denoted by $u_s$ and $\text{Image}(u)$ respectively.  In general, for $s\in \text{Image}(u)$ $u_s$ is not a Cauchy surface even if $M$ is globally hyperbolic. We left such an example in Section \ref{sec:level sets}.

Based on the concepts mentioned above, our main results of this paper are stated as follows.

\begin{theorem}\label{the10}
  If $(M, g)$ is a globally hyperbolic space-time, then $\mathcal{S}(M)\neq \emptyset$. In other words, the eikonal equation admits at least one globally defined locally Lipschitz viscosity solution.
\end{theorem}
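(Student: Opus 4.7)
The plan is to exhibit an explicit element of $\mathcal{S}(M)$ as a signed Lorentzian distance function from a Cauchy surface, in the spirit of the Riemannian distance-to-a-closed-set construction. By the Bernal--S\'anchez theorem, every globally hyperbolic $(M,g)$ admits a smooth spacelike Cauchy surface $S$; such an $S$ is achronal and meets every inextendible timelike curve exactly once, so that $M = J^+(S) \cup J^-(S)$ with $J^+(S) \cap J^-(S) = S$. With $S$ fixed, define
\[
u(x) := \begin{cases} \sup\{d(y,x) : y \in S\}, & x \in J^+(S), \\ -\sup\{d(x,y) : y \in S\}, & x \in J^-(S), \end{cases}
\]
the two branches agreeing at $u|_S \equiv 0$.

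The first step is to show that $u$ is well-defined, finite, and locally Lipschitz with respect to $h$. Finiteness is a compactness argument: for each $x \in J^+(S)$ the set $J^-(x)\cap S$ is compact (by global hyperbolicity combined with the Cauchy property of $S$), and $d$ is continuous on globally hyperbolic $(M,g)$, so the supremum defining $u(x)$ is attained at some $y^\ast(x) \in S$. For local Lipschitz continuity, I would work in small causally convex neighborhoods where $d$ is comparable to $d_h$, and combine the reverse triangle inequality with a near-maximizer argument to bound the Riemannian oscillation of $u$.

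For the viscosity conditions, take first $V \in \nabla^+u(x)$ realized by $\phi \geq u$ with $\phi(x)=u(x)$, and any future-directed timelike $U \in T_xM$. Concatenating a maximizer from $S$ to $x$ with a future-directed timelike curve $\sigma$ from $x$ with $\dot\sigma(0)=U$ and applying the reverse triangle inequality yields $u(\sigma(\epsilon))\geq u(x)+L(\sigma|_{[0,\epsilon]})$, which together with $\phi\geq u$ gives in the limit $g(V,U)\geq \sqrt{-g(U,U)}$ for every future-directed timelike $U$; a Lorentzian reverse Cauchy--Schwarz computation then forces $V$ to be past-directed timelike with $g(V,V)\leq -1$, the subsolution condition. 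For the supersolution condition, the Avez--Seifert theorem supplies a maximizing unit-speed future-directed timelike geodesic $\gamma:[0,u(x)]\to M$ from $S$ to $x$ along which $u(\gamma(s))=s$; for any $V \in \nabla^-u(x)$, a test function $\phi\leq u$ with $\phi(x)=u(x)$ yields $g(V,\dot\gamma(u(x)))\geq 1$.

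The step I expect to be most delicate is upgrading the single-direction bound $g(V,\dot\gamma(u(x)))\geq 1$ to the full supersolution inequality $g(V,V)\geq -1$. I would close this by combining the family of inequalities produced by all calibrating maximizers (when more than one exists) with the observation that at points of Fr\'echet differentiability of $u$ one has $g(\nabla u,\nabla u)=-1$ by construction, and then propagating the equality to general $V\in\nabla^-u(x)$ via continuity and a Clarke-gradient-type characterization using the locally Lipschitz regularity established above. A secondary technicality is the base case $x \in S$, where the two branches of $u$ meet at $0$; there the viscosity inequalities should be verified using the smoothness and spacelike nature of $S$ together with its unit future-directed timelike normal. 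Symmetric arguments using past-directed maximizers handle $x \in J^-(S)$.
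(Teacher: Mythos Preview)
There is a genuine gap, and it lies exactly where you flagged the difficulty: the supersolution condition. Your proposed function $u=d(S,\cdot)$ on $J^+(S)$ is \emph{semiconvex}, not semiconcave. Indeed, for each $y\in S$ one has $d(y,\cdot)\le d(S,\cdot)$ with equality at the maximizer, so $d(y^\ast,\cdot)$ is a smooth \emph{lower} support with Hessian bounded below; this yields semiconvexity of $d(S,\cdot)$. Consequently $\nabla^-u(x)=\mathrm{co}\,\nabla^*u(x)$. At a point $x$ in the future cut locus of $S$ there are at least two distinct limiting gradients $W_1\neq W_2$, both past-directed unit timelike (equal to $-\dot\gamma_i$ for the two maximizing normals). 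For the midpoint $V=\tfrac12(W_1+W_2)\in\nabla^-u(x)$ the reverse Cauchy--Schwarz inequality gives $g(W_1,W_2)<-1$, hence
\[
g(V,V)=\tfrac14\bigl(g(W_1,W_1)+2g(W_1,W_2)+g(W_2,W_2)\bigr)<-1,
\]
so the supersolution inequality $g(V,V)\ge -1$ fails. Your proposed Clarke-gradient closure actually proves the \emph{opposite} inequality: since the set $\{W:W\text{ past-directed timelike},\ g(W,W)\le -1\}$ is convex, every $V\in\mathrm{co}\,\nabla^*u(x)$ satisfies $g(V,V)\le -1$, which is the subsolution direction you already had. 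In fact this is consistent with Theorem~\ref{the7} of the paper: any element of $\mathcal S(M)$ with consistent (past-directed) time orientation must be locally semiconcave, so a merely semiconvex candidate cannot work.

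The paper avoids this by a different construction. Instead of the signed distance to a single Cauchy surface, it takes a steep temporal function $\tau$, sets $u^+_{\tau_s}(p)=d(x_0,\tau_s)-d(p,\tau_s)$ on $I^-[\tau_s]$, and passes to a subsequential limit $u^+=\lim_{s_n\to\infty}u^+_{\tau_{s_n}}$. The point is that $-d(\,\cdot\,,\tau_s)$ (distance \emph{to} a future level set, with a minus sign) is semiconcave, and this persists uniformly in $s$ thanks to support functions $u^+_{p,s}$ built from intermediate points on maximizing geodesics together with Hessian comparison; the limit is then globally defined and locally semiconcave. Semiconcavity is precisely what makes the supersolution step trivial: whenever $\nabla^-u^+(p)\neq\emptyset$ the function is differentiable at $p$ and $g(\nabla u^+(p),\nabla u^+(p))=-1$. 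Your branch $u=-d(\,\cdot\,,S)$ on $J^-(S)$ is of this good (semiconcave) type, but the branch $d(S,\cdot)$ on $J^+(S)$ is not; to salvage your approach you would have to replace it by a limit of $-d(\,\cdot\,,S_s)$ for Cauchy surfaces $S_s$ receding to the future, which is exactly the paper's construction.
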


In the following we will propose some properties of $u\in\mathcal{S}(M)$ when  the time orientation of $u$ is consistent. When the time orientation of $u$ is always past-directed, we have

\begin{theorem}\label{the7}
 For any $u\in \mathcal{S}(M)$ for which the time orientation of $u$ at any point is past-directed, we have 
\begin{itemize}
  \item For each $s\in \text{Image}(u)$, $u_s$ is a partial Cauchy surface.
  \item There exists a $t_0>0$, which is dependent on $x$, such that for any $0<t<t_0$, 
\begin{equation}\label{25}
           u(x)=\inf\limits_{x\leq y, d(x,y)=t}u(y)-t.
\end{equation}
  \item The function $u$ is locally semiconcave on $(M, g)$.
\end{itemize}
\end{theorem}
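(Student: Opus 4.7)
The plan is to derive all three conclusions from a single basic monotonicity statement: whenever $x\leq y$, one has $u(y)-u(x)\geq d(x,y)$, with strict inequality whenever $x\neq y$. To establish this, at points where $u$ is smooth apply the reverse Cauchy--Schwarz inequality to the future-directed unit timelike vector $-\nabla u$ (this uses past-directedness of $\nabla u$ and $|\nabla u|=1$) and the future-directed causal tangent $\dot\gamma$ of any curve $\gamma\in\Omega_{x,y}$; this yields $g(\nabla u,\dot\gamma)\geq|\dot\gamma|$ almost everywhere, with strict inequality whenever $\dot\gamma$ is not parallel to $\nabla u$. Integrating and taking the supremum over $\gamma$ gives the basic inequality. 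The general locally Lipschitz case is reduced to this by Rademacher's theorem together with the structure of $\nabla^*u$ discussed right after Definition \ref{Def6}.

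The first two bullets of the theorem then follow quickly from this monotonicity. For acausality of $u_s$: any distinct $x,y\in u_s$ with $x\leq y$ forces $0=u(y)-u(x)\geq d(x,y)>0$ when $y\in I^+(x)$, and the strict inequality along any non-constant null curve handles the remaining case $y\in J^+(x)\setminus I^+(x)$. For edgelessness: if $p\in u_s$ and $U$ is any neighborhood of $p$, every future-directed timelike curve in $U$ from $I^-(p,U)$ to $I^+(p,U)$ has endpoint $u$-values bracketing $s$ by strict monotonicity, so by continuity it must cross $u_s$, preventing $p$ from being an edge point. The easy direction $u(x)\leq u(y)-t$ of \eqref{25} for any admissible $y$ is likewise immediate from the monotonicity.

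The heart of the theorem is the reverse direction of \eqref{25}, which amounts to constructing a forward calibrated curve $\alpha:[0,t_0]\to M$ with $\alpha(0)=x$, $u(\alpha(s))=u(x)+s$, and $\alpha$ future-directed unit-speed timelike: its endpoint $y=\alpha(t)$ then satisfies $d(x,y)\geq L(\alpha)=t$ while $d(x,y)\leq u(y)-u(x)=t$ by monotonicity, giving $u(y)-t=u(x)$ and $d(x,y)=t$. At a smooth point of $u$ the natural candidate is the integral curve of $-\nabla u$, which is automatically unit-speed future-directed timelike with $\frac{d}{ds}u(\alpha(s))=g(\nabla u,-\nabla u)=1$. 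At a non-smooth point $x$, I would pick a limiting gradient $V\in\nabla^*u(x)$ (past-directed timelike, unit), choose differentiability points $x_k\to x$ with $\nabla u(x_k)\to V$, run the associated integral curves $\alpha_k$ on a common time interval $[0,t_0]$, and extract a subsequential limit $\alpha$ using global hyperbolicity (to confine $\bigcup_k\alpha_k([0,t_0])$ in a common compact causal diamond) together with the uniform speed bound $|\dot\alpha_k|=1$; the calibration property passes to the limit by continuity of $u$. This construction of forward calibrated curves is the principal technical obstacle.

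For the third bullet, local semiconcavity follows from the variational representation \eqref{25}. Fix $x_0\in M$, a relatively compact convex normal neighborhood $U$ of $x_0$, and $t$ small enough that \eqref{25} applies uniformly on $U$ with the admissible set $\{y:\exists\,x\in U,\,x\leq y,\,d(x,y)=t\}$ contained in a single compact $K$ on which the inverse exponential map $\exp_y^{-1}$ is smoothly defined on $U$. On $U\times K$ the Lorentz distance $(x,y)\mapsto d(x,y)$ is smooth with uniformly bounded derivatives in $x$ up to second order, so the family $\{x\mapsto u(y)-d(x,y)\}_{y\in K}$ has uniform $C^{1,1}$-in-$x$ bounds. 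Since $u$ is locally the infimum of this family subject to $d(x,y)=t$, the standard criterion that an infimum of functions with uniform upper bounds on the second derivative is semiconcave (cf.\ \cite{CannarsaPNDE2004}) yields local semiconcavity of $u$.
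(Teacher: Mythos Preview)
Your monotonicity lemma and its use for the first bullet match the paper's Lemmas~\ref{lem11}, \ref{lem16}, \ref{lem17}. The genuine gap is in your construction of calibrated curves for the reverse inequality in \eqref{25}. You want to run integral curves of $-\nabla u$ from differentiability points $x_k$ and pass to a limit, but differentiability of $u$ at a single point $x_k$ does not give a vector field in a neighborhood of $x_k$: $\nabla u$ is defined only almost everywhere and is in general discontinuous, so there is no ODE to solve and the curves $\alpha_k$ simply do not exist. The paper reverses the logic. Using that $u_s$ is a partial Cauchy surface (your first bullet), one finds $s$ with $x\in D^-(u_s)\setminus u_s$; since $D(u_s)$ is open, global hyperbolicity makes $J^+(x)\cap D^-(u_s)$ compact (Lemma~\ref{lem25}). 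The infimum in \eqref{E26} is then attained by a maximal unit-speed geodesic inside this compact set (Lemma~\ref{lem18}), and a reparametrization argument shows $\tilde u(t,x)$ is constant in $t$, hence equals $u(x)$ as $t\to 0$ (Lemma~\ref{lem22}). The calibrated curve is obtained as the minimizer, not as an ODE flow; only afterwards (Lemma~\ref{lem21}) does the paper prove the limiting statement you attempt, and that proof relies on the calibrated curves already in hand.

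A smaller issue: your semiconcavity argument writes $u$ as an infimum ``subject to $d(x,y)=t$'', but the index set depends on $x$, so this is not an infimum of a fixed family of functions of $x$ and the standard criterion does not apply directly. The fix is to use the calibrated curve: with $y=\gamma_x(t)$ fixed, the map $z\mapsto u(y)-d(z,y)$ is a smooth upper support function for $u$ at $x$ (since $u(z)\leq u(y)-d(z,y)$ for all $z\leq y$ by your monotonicity, with equality at $z=x$); a uniform Hessian bound on $d(\cdot,y)$ for $y$ ranging in a compact set then yields semiconcavity via Lemma~\ref{lem9}. This is essentially the paper's route through Proposition~\ref{prop4} and the support-function method of Proposition~\ref{the1}.
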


 Actually, under the same conditions of Theorem \ref{the7}, viscosity solutions possess some  properties of weak KAM type like the ones in the Riemannian case. To state these results, recall that a future-directed, future inextendible timelike geodesic $\gamma:[0, T)\rightarrow M$ is said to be a future-directed timelike ray if $d(\gamma(0), \gamma(t))=L(\gamma)|_{[0,t]}$ for all $0\leq t \leq T$. About timelike rays on space-time, interested readers can refer to \cite[Definition 8.8]{Beem 1996}.

\begin{theorem}\label{the2}
 For any $u\in \mathcal{S}(M)$ for which the time orientation of $u$ at any point is past-directed, we have
\begin{itemize}
  \item  $u(x)$ is differentiable at $x\in M$ if and only if there exists a unique future-directed timelike ray $\gamma_x: [0, T)\rightarrow M$ that satisfies
\begin{align}\label{E15}
          \gamma_x(0)=x,~~ u(\gamma_x(t))=u(x)-t~~\mbox{for every } t\in [0 ,T)
\end{align}
and
\begin{align}\label{E16}
         \dot{\gamma_x}(0)=-\nabla u(x)
\end{align}
where $T$ is the maximal existence time of $\gamma_x$.
  \item For any $x\in M$, $u$ is differentiable at any $\gamma_x(t)$ for $t\in (0, T)$ and $\dot{\gamma_x}(t)=-\nabla u(\gamma_x(t))$.
\end{itemize}
\end{theorem}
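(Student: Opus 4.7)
I would combine the local variational representation \eqref{25} from Theorem \ref{the7}, the local semiconcavity of $u$, and the reverse Cauchy--Schwarz inequality for timelike vectors. As a preliminary step, for any $x$ and small $t>0$, \eqref{25} produces a point $y$ realizing the infimum; global hyperbolicity then yields a unit-speed future-directed timelike maximizing geodesic $\alpha:[0,t]\to M$ from $x$ to $y$ of length $t$, and the viscosity sub-/supersolution inequalities applied at each intermediate point force $u\circ\alpha$ to be affine with unit slope, so $\alpha$ is forward calibrated. Iterating at the endpoint and extending maximally produces a forward calibrated future-directed timelike ray $\gamma$ on some $[0,T)$ from every $x$.

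For the direction (differentiability $\Rightarrow$ unique ray), differentiability at $x$ together with the eikonal identity supplies the past-directed unit timelike vector $\nabla u(x)$. Along any forward calibrated ray $\gamma$ from $x$, differentiation of \eqref{E15} at $s=0^+$ gives $|g(\nabla u(x),\dot\gamma(0))|=1$; since $-\nabla u(x)$ and $\dot\gamma(0)$ are both future-directed unit timelike, the equality case of the reverse Cauchy--Schwarz inequality forces $\dot\gamma(0)=-\nabla u(x)$. Uniqueness of $\gamma$ then follows from uniqueness of geodesics with prescribed initial data.

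Conversely, suppose the forward calibrated ray from $x$ is unique but $u$ is not differentiable at $x$. By local semiconcavity (Theorem \ref{the7}), $\nabla^* u(x)$ must contain two distinct vectors $V_1\neq V_2$. Choose sequences $x_k^{(i)}\to x$ of differentiability points with $\nabla u(x_k^{(i)})\to V_i$ and let $\gamma_k^{(i)}$ be the unique calibrated ray from $x_k^{(i)}$ produced by the previous direction, with $\dot\gamma_k^{(i)}(0)=-\nabla u(x_k^{(i)})$. Continuous dependence of timelike geodesics on initial data, combined with continuity of $u$ in the calibration identity, yields in the limit two distinct forward calibrated future-directed timelike rays from $x$ with initial velocities $-V_i$, contradicting uniqueness.

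The second bullet follows by propagation. For $t_0\in(0,T)$ and $y=\gamma_x(t_0)$, the tail $s\mapsto\gamma_x(t_0+s)$ is a forward calibrated ray from $y$; if a second such ray from $y$ with a different initial velocity existed, its concatenation with $\gamma_x|_{[0,t_0]}$ would be a forward calibrated future-directed timelike curve from $x$ that inherits the calibration identity, and hence realizes the Lorentzian distance between its endpoints, yet possesses a timelike corner at $y$. The Lorentzian reverse triangle inequality forbids such a broken distance-realizer: in a convex normal neighborhood of $y$ one can construct a strictly longer smooth timelike curve. Hence the ray from $y$ is unique, and the first bullet supplies differentiability of $u$ at $y$ with $\dot\gamma_x(t_0)=-\nabla u(y)$. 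I expect the principal technical obstacle to be this no-corner step, which requires a sharp quantitative form of the reverse triangle inequality in a globally hyperbolic space-time; a secondary difficulty in the converse of the first bullet is obtaining a uniform lower bound on the existence interval of the approximating geodesics $\gamma_k^{(i)}$, for which global hyperbolicity supplies the necessary causal compactness.
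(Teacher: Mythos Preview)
Your proposal is correct and follows essentially the same route as the paper: existence of calibrated rays via the variational representation \eqref{25} and iterated extension (the paper's Lemmas \ref{lem18}, \ref{lem22} and subsection \ref{subs5.2}), the equality case of the reverse Cauchy--Schwarz inequality for the forward direction, the limiting-gradient construction of a calibrated ray for each $V\in\nabla^*u(x)$ (the paper's Lemma \ref{lem21}, which is exactly your sequence argument) for the converse, and the no-corner/strict reverse triangle inequality argument for interior differentiability (the paper's Proposition \ref{prop3}). The two technical points you flag are handled exactly as you anticipate, via the strict inequality $d(x,\check\gamma(s_1))>s_1$ for a broken timelike curve in a globally hyperbolic space-time and the lower semicontinuity of the maximal ODE existence time with respect to initial data.
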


As an application of Theorem \ref{the7}, \ref{the2}, we can draw the following conclusion

\begin{theorem}\label{prop1}
Let $\phi$, $\varphi\in \mathcal{S}(M)$ and the time orientation of each of them is consistent, then we have
\begin{itemize}
  \item If the time orientation of $\phi$ and $\varphi$ are identical, then $\min\{\phi , \varphi\}\in\mathcal{S}(M)$.
  \item If  the time orientation  of $\phi$ and $\varphi$ are different, then $\max\{\phi , \varphi\}\in \mathcal{S}(M)$ .
\end{itemize}
\end{theorem}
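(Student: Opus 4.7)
The plan is to handle the two cases separately, using the time-orientation hypotheses only where they are actually needed. In both cases $u$ is immediately locally Lipschitz on $(M,g)$, and at any point $x$ where $\phi(x) \neq \varphi(x)$ continuity forces $u$ to agree with one of $\phi,\varphi$ in a neighborhood of $x$; there the viscosity sub- and supersolution conditions (and the timelike character of $\nabla u$) are inherited trivially. Hence the whole argument reduces to verifying these conditions at a crease $p$ with $\phi(p) = \varphi(p) = u(p)$.

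Case 1 ($u = \min\{\phi,\varphi\}$, both past-directed). The supersolution half is classical: any $C^1$ test function $\psi \le u$ with $\psi(p) = u(p)$ satisfies $\psi \le u \le \phi$ and $\psi \le u \le \varphi$ with equality at $p$, so $\nabla\psi(p)$ lies simultaneously in $\nabla^-\phi(p)$ and $\nabla^-\varphi(p)$ and the inequality $g(V,V)\ge -1$ is inherited from either factor. For the subsolution I would first combine the variational representation \eqref{25} applied to $\phi$ and to $\varphi$ to derive the same representation for $u$ locally: for all sufficiently small $t>0$ and every $y$ with $p \le y$ and $d(p,y) = t$, one has $u(y) \ge u(p) + t$, with equality attained at the minimizer for whichever of $\phi, \varphi$ realizes $u(p)$. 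Given $V \in \nabla^+ u(p)$ realized by $\psi \ge u$, plugging $y = \exp_p(tW)$ for an arbitrary future-directed timelike unit vector $W \in T_pM$ and dividing by $t$ yields $g(V,W) \ge 1$ in the limit $t \to 0^+$. The reverse Cauchy--Schwarz inequality then forces $V$ to be past-timelike with $|V| \ge 1$, i.e.\ $g(V,V) \le -1$, giving the subsolution inequality.

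Case 2 ($u = \max\{\phi,\varphi\}$, $\phi$ past-directed, $\varphi$ future-directed). Now the subsolution half is the easy mirror of Case 1: any $C^1$ function $\psi \ge u$ with $\psi(p) = u(p)$ touches both $\phi$ and $\varphi$ from above at $p$, so $\nabla\psi(p) \in \nabla^+\phi(p) \cap \nabla^+\varphi(p)$ and the inequality $g(V,V) \le -1$ is inherited. The substantive step is supersolution at the crease, and here the opposite orientations force $u$ to grow at rate exactly $1$ in two distinct timelike directions. By Theorem~\ref{the7} applied to $\phi$ there is a future-directed unit timelike $W_0 \in T_pM$ (the initial tangent of a forward calibrated curve of $\phi$) along which $\phi$ grows at rate $1$, while along the same direction $\varphi$ decreases at rate at least $1$ by the time-reversed analogue of Theorem~\ref{the7} applied to the future-directed $\varphi$; consequently $u(\exp_p(sW_0)) = u(p)+s$ for small $s>0$. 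A symmetric argument exhibits a past-directed unit timelike $W_0' \in T_pM$ with $u(\exp_p(sW_0')) = u(p)+s$. For $V \in \nabla^- u(p)$ realized by $\psi \le u$, evaluating along these two rays gives $g(V,W_0) \le 1$ and $g(V,W_0') \le 1$. If $V$ is timelike, exactly one of $W_0, W_0'$ has the opposite orientation to $V$, and reverse Cauchy--Schwarz applied with that one yields $|V| \le 1$, i.e.\ $g(V,V) \ge -1$; the spacelike, null, and zero cases are automatic.

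The step I expect to be the main obstacle is producing the two calibrated directions $W_0, W_0'$ in Case 2 at an arbitrary crease point $p$, without any differentiability assumption on $\phi$ or $\varphi$ there. The route I would take is to use global hyperbolicity: for small $t$ the set $\{y : p \le y,\; d(p,y) = t\}$ is compact, so the infimum in Theorem~\ref{the7} is attained at some $y^*$, and a maximal timelike geodesic from $p$ to $y^*$ is a forward calibrated curve of $\phi$ (equality at intermediate times follows by applying \eqref{25} at each intermediate point), whose unit initial tangent is the desired $W_0$. The symmetric compactness argument in the past cone of $p$, applied to the future-directed analogue of Theorem~\ref{the7} for $\varphi$, produces $W_0'$. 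Once both directions are in hand, Case 2 closes by the reverse Cauchy--Schwarz step described above.
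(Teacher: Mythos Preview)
Your argument is correct and the overall shape matches the paper's, but the two substantive steps are handled by genuinely different means. In Case~1 the paper never touches test functions at the crease: it invokes Theorem~\ref{the7} to get local semiconcavity of $\phi$ and $\varphi$, cites that a minimum of semiconcave functions is semiconcave, and then appeals to Remark~\ref{viscosity and smc} (locally semiconcave plus almost-everywhere classical solution $\Leftrightarrow$ viscosity solution) together with a density argument on $\{\phi\neq\varphi\}\cup\mathrm{int}\{\phi=\varphi\}$. Your route via the dominance inequality $u(y)\ge u(p)+d(p,y)$ and reverse Cauchy--Schwarz is more elementary and bypasses the semiconcavity machinery entirely. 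In Case~2 your subsolution step---any $\psi$ touching $\max\{\phi,\varphi\}$ from above at a crease point also touches each of $\phi,\varphi$ from above, so $\nabla\psi(p)\in\nabla^+\phi(p)$ already forces $g(V,V)\le-1$---is strictly simpler than the paper's, which instead proves the stronger fact $\nabla^+\max\{\phi,\varphi\}(p)=\emptyset$ by first showing any super-touching function must have vanishing gradient (since $\max\{\phi,\varphi\}$ has a local minimum along every causal curve through $p$) and then deriving a contradiction via a calibrated curve and Taylor expansion. For the supersolution half of Case~2 the two arguments are essentially the same: both rest on the existence of forward calibrated curves (established in subsection~\ref{subs5.2}) and the reverse Cauchy--Schwarz inequality; you use two calibrated directions explicitly, the paper uses one and a without-loss-of-generality on the orientation of $V$. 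What the paper's approach buys is the extra structural information ($\min\{\phi,\varphi\}$ semiconcave, $\nabla^+=\emptyset$ at the crease in Case~2); what yours buys is independence from Theorem~\ref{the7} and Remark~\ref{viscosity and smc}.
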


The results of the above theorems are based on the fact that the time orientation of $u$ is consistent. One may wonder what happens when the time orientation of $u$ is non-consistent. Because of distinctiveness of causal structure and non-positive-definiteness of the Lorentzian metric, the properties of viscosity solutions to the equation (\ref{E1}) are significantly different from the ones in the Riemannian case. We will propose an example in Section \ref{sec:6} to show that in this case the viscosity solutions lose many properties such as locally semiconcavity. For any $u\in \mathcal{S}(M)$, we introduce  the changing set $C_u$ by
\[
C_u:=\{x\in M|~ \text{there are} ~V, W \in \nabla^* u(x)~\text{such that}~ V ~\text{is a past-directed causal vector},~ W~\text{is a future-directed causal vector}\}.
\]


The following theorem characterizes the properties of $C_u$.
\begin{theorem}\label{the5}
 For any $u\in \mathcal{S}(M)$, we have
\begin{itemize}
  \item $C_u$ is a closed set without interior point. If $C_u\neq \emptyset$, then $M\setminus C_u$ contains at least two connected components.
  \item Let $\mathcal{M}$ be the  Lebesgue measure on $M$, then $\mathcal{M}(C_u)=0$.
  \item For any $x\in M\setminus C_u$, there exists a neighborhood $U_x$ of $x$ such that for the space-time $(U_x, g|_{U_x})$, Theorems \ref{the7}, \ref{the2} hold true for $u$.
  \item If $C_u$ is an acausal set, then $C_u$ is a partial Cauchy surface.
\end{itemize} 
 
\end{theorem}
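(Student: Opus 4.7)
The plan is to establish the four items of Theorem~\ref{the5} in sequence: a topological decomposition of $M$ yielding closedness of $C_u$ and Part 3; a measure-theoretic bound forcing both measure zero and empty interior; the two-component claim; and finally the edgeless property. Throughout, let $F_{\mathrm{vec}}(x)$ and $P_{\mathrm{vec}}(x)$ denote the disjoint closed half-cones of future- and past-directed unit timelike vectors in $T_x M$.

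\textbf{Step 1 (Orientation decomposition and Part 3).} Any $V\in\nabla^{*}u(x)$ is a limit of gradients $\nabla u(y_{j})$ satisfying $g(\nabla u(y_{j}),\nabla u(y_{j}))=-1$, hence $g(V,V)=-1$, so $V\in F_{\mathrm{vec}}(x)\sqcup P_{\mathrm{vec}}(x)$. Using local boundedness of $\nabla u$ (observation after Definition~\ref{Def6}) and a diagonal argument, $\nabla^{*}u$ is sequentially closed: $x_{n}\to x$, $V_{n}\in\nabla^{*}u(x_{n})$, $V_{n}\to V$ imply $V\in\nabla^{*}u(x)$. Combined with closedness of $F_{\mathrm{vec}},P_{\mathrm{vec}}$, the sets
\[
F=\{x\in M:\nabla^{*}u(x)\subseteq F_{\mathrm{vec}}(x)\},\qquad P=\{x\in M:\nabla^{*}u(x)\subseteq P_{\mathrm{vec}}(x)\}
\]
are open, $M=F\sqcup P\sqcup C_{u}$, and $C_{u}$ is closed. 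For $x\in M\setminus C_{u}$, pick an open globally hyperbolic neighborhood $U_{x}\subseteq M\setminus C_{u}$ (such neighborhoods form a basis at every point in any space-time); by connectedness, $U_{x}\subseteq F$ or $U_{x}\subseteq P$. Then $u|_{U_{x}}\in\mathcal{S}(U_{x})$ has consistent time orientation on $(U_{x},g|_{U_{x}})$, and Theorems~\ref{the7} and~\ref{the2} apply directly if $U_{x}\subseteq P$, and after reversing the time orientation of $g|_{U_{x}}$ (which preserves both the equation and the viscosity notion) if $U_{x}\subseteq F$. This proves Part 3.

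\textbf{Step 2 (Measure zero, empty interior, two components).} By Step 1, $u$ is locally semiconcave on the open set $M\setminus C_{u}$. Let $A^{\pm}$ denote the differentiability points of $u$ with future/past-directed gradient; Rademacher gives $\mathcal{M}(M\setminus(A^{+}\cup A^{-}))=0$, and the definition of $F,P$ forces $A^{+}\cap P=\emptyset=A^{-}\cap F$ (the existing gradient is itself a limiting gradient and must belong to the relevant cone). To obtain $\mathcal{M}(C_{u})=0$ I argue that at a Lebesgue density point of $A^{+}\cap C_{u}$ the local semiconcavity together with the variational formula of Theorem~\ref{the7} (available in the $F$-component adjacent to $x$ via Step 1) rule out past-directed limiting gradients, contradicting membership in $C_{u}$; the symmetric argument for $A^{-}\cap C_{u}$ yields $\mathcal{M}(A^{\pm}\cap C_{u})=0$, hence $\mathcal{M}(C_{u})=0$. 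Empty interior of $C_{u}$ follows because any non-empty open set has positive Lebesgue measure. For the two-component claim, pick $p\in C_{u}$: the past- and future-directed limiting gradients at $p$ arise from sequences in $A^{-}$ and $A^{+}$, which (modulo the measure-zero set $C_{u}$) must contain sub-sequences in $P$ and $F$ respectively. Thus both $F$ and $P$ are non-empty open sets, so $M\setminus C_{u}=F\sqcup P$ has at least two connected components.

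\textbf{Step 3 (Acausal $C_u$ is a partial Cauchy surface).} A partial Cauchy surface is an acausal edgeless set, so assuming $C_{u}$ is acausal, I show $\mathrm{edge}(C_{u})=\emptyset$. Suppose for contradiction $p\in\mathrm{edge}(C_{u})\subseteq C_{u}$, and fix a small globally hyperbolic neighborhood $U$ of $p$. By Step 2, both $F$ and $P$ accumulate at $p$; by acausality, any timelike curve in $M$ meets $C_{u}$ at most once, so any timelike segment from $p$ lies in $F\cup P$ off of $p$ and, by connectedness, entirely in $F$ or entirely in $P$. Shrinking $U$ and using openness of $F,P,I^{\pm}(p,U)$ together with a short timelike translation along a future vector field, select $q_{-}\in F\cap I^{-}(p,U)$ and $q_{+}\in P\cap I^{+}(p,U)$. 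The edge hypothesis applied to a neighborhood $U'\subseteq U$ with $q_{\pm}\in I^{\mp}(p,U')$ then supplies a future-directed timelike curve in $U'$ from $q_{-}$ to $q_{+}$ missing $C_{u}$; by connectedness this curve lies in $F$ or in $P$, contradicting $q_{-}\in F$ and $q_{+}\in P$.

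The main obstacles I expect are (i) pinning down $\mathcal{M}(C_{u})=0$ in Step 2 --- this genuinely combines local semiconcavity from Step 1, the variational characterization of Theorem~\ref{the7}, the disconnectedness of $F_{\mathrm{vec}}\sqcup P_{\mathrm{vec}}$, and Lebesgue density, in order to exclude the pathology of a positive-measure set of orientation-changing differentiability points; and (ii) the endpoint-selection in Step 3, where producing $q_{\pm}$ simultaneously in the opposite orientation components and in the past/future cones of $p$ requires careful use of openness together with a controlled timelike translation preserving the orientation component.
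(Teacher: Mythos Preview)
Your Step 1 is correct and is essentially the paper's Lemma~\ref{lem26}; the use of causally convex (hence globally hyperbolic) neighborhoods for Part 3 is exactly what the paper does, and your two-component argument in Step 2 is also fine. There are, however, genuine gaps in the remaining parts.

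\textbf{Step 2, measure zero.} Your argument invokes semiconcavity and the variational formula of Theorem~\ref{the7} ``in the $F$-component adjacent to $x$'' at a density point $x\in A^{+}\cap C_{u}$, but $x\in C_{u}$ lies outside $F\cup P$, so neither semiconcavity nor the variational representation from Step 1 is available \emph{at} $x$; Lebesgue density of $A^{+}$ does not transport these properties across $\partial F$, and no mechanism is offered. The paper's route is much shorter: it observes (via the Claim inside the proof of Lemma~\ref{lem11}) that at any differentiability point of $u$ all nearby gradients, and hence all limiting gradients, share one time orientation; thus $C_{u}$ is contained in the non-differentiability set, and Rademacher gives $\mathcal{M}(C_{u})=0$ and empty interior immediately.

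\textbf{Step 3, edgelessness.} The selection $q_{-}\in F\cap I^{-}(p,U)$, $q_{+}\in P\cap I^{+}(p,U)$ is not a technicality to be filled in; it typically fails. Since $C_{u}$ is acausal and $p\in C_{u}$, each of $J^{\pm}(p)\setminus\{p\}$ is contained in $M\setminus C_{u}$ and, being path-connected, lies entirely in one of $F,P$; the edge-avoiding timelike curve then forces these two orientations to agree, so all of $I^{+}(p,U)\cup I^{-}(p,U)$ sits in a single component, say $P$, and $F\cap I^{\pm}(p,U)=\emptyset$. No ``short timelike translation preserving the orientation component'' can produce your $q_{-}$. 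The paper runs the contradiction in the opposite direction: having shown $I^{+}(p)\cup I^{-}(p)\subseteq P$ near $p$, it fixes $z_{1}\in I^{-}(p,U)$, $z_{2}\in I^{+}(p,U)$ and a neighborhood $U_{1}\subseteq J^{+}(z_{1})\cap J^{-}(z_{2})$ of $p$; since $p\in C_{u}$ there is a point $y\in U_{1}\cap F$, and a timelike curve from $z_{1}$ through $y$ to $z_{2}$ (which meets $C_{u}$ at most once, and not at $y$) forces the orientation at $y$ to agree with that of $I^{-}(p)$ or $I^{+}(p)$, i.e.\ $y\in P$ --- a contradiction. So the correct argument does not connect points of opposite components by the edge curve; it shows the causal cones of $p$ share a single orientation and then traps a point of the other component between them.
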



The remainder of this paper is organized as follows. In Section \ref{sec:2}, we construct a function $u^+$ and prove $u^+\in \mathcal{S}(M)$, which completes the proof of theorem \ref{the10}. In Section \ref{sec:level sets}, we study the properties of level sets of $u$ when the time orientation of $u$ is consistent. In Section \ref{sec:uuu}, we prove $u$ has a variational representation when the time orientation of $u$ is consistent and as a consequence, $u$ is locally semiconcave. Based on the results in these two sections, Theorem \ref{the7} is proved. In Section \ref{sec:3}, we establish the weak KAM properties of  $u$ when  the time orientation of $u$ is consistent. Theorems \ref{the2}, \ref{prop1} are proved in this section. We prove Theorem \ref{the5} in Section \ref{sec:6}.

\section{Construction of global viscosity solutions to equation (\ref{E1})}\label{sec:2}
In this section, we firstly construct two globally defined functions $u^+$ and $u^-$ by a steep time function $\tau$ introduced in \cite{SuhrCMP2018}. Secondly, we will prove the well-posedness
and the semiconcavity of $u^+$ by using the method of support function. Finally, we will show that $u^+$ is a viscosity solution to equation (\ref{E1}), which completes the proof of Theorem \ref{the10}.

\subsection{The construction of $u^{\pm}$}

Recall that $(M, g)$ is globally hyperbolic, $d(\cdot, \cdot)$ is continuous with respect to both two variables and satisfies the reverse triangle inequality:
\[
       d(p, q)+d(q, r)\leq d(p, r)  ~~~  \text{for}~~ p,q,r\in M ~~\text{and}~~ p\leq q\leq r.
\]

Let $\gamma:[a, b)\rightarrow M$ be a past-directed (respectively, future-directed) causal curve. The point $p\in M$ is said to be a past (respectively, future) endpoint of $\gamma$ corresponding to $t=b$ if
\[
\lim\limits_{t\rightarrow b^-}\gamma(t)=p.
\]

A causal curve is said to be past (respectively, future) inextendible if it has no past (respectively, future) endpoint. A causal curve $\gamma:(a, b)\rightarrow M$ is said to be inextendible if it is both past and future inextendible.

\begin{definition}[Cauchy Surface]\label{Def8}
 A Cauchy surface $S$ is a subset of $M$ for which every inextendible causal curve intersects exactly once.
\end{definition}

\begin{definition}[Cauchy development]\label{Def13}
Given a closed subset $S$ of $(M, g)$, the past Cauchy development or domain of dependence of $S$, $D^-(S)$, is defined as the set of all points $q$ such that every future inextendible causal curve from $q$ intersects $S$. The future domain of dependence of $S$, $D^+(S)$, is defined in a similar way by exchanging the roles of the future and the past. The Cauchy development or the domain of dependence is the set $D(S)=D^-(S)\cup D^+(S)$.
\end{definition}

By \cite[Theorem 3]{SuhrCMP2018}, there exists a steep temporal function $\tau : M \rightarrow R$ such that $\tau$ is strictly increasing along any future-directed causal curve and
\begin{equation}\label{E24}
|d \tau (V)|\geq \max\{\sqrt{-g(V, V)} , |V|_h\}
\end{equation}
for all timelike vector $V \in TM$. \cite[Corollary 1.8]{SuhrCMP2018} implies that there exists a diffeomorphism $M\cong R\times N$ such that
\[
       \tau: M\cong R\times N\rightarrow R,~~p\cong (s, x)\rightarrow s.
\]

  For $s\in \mathbb{R}$, let $\tau_s$ be the level set of $\tau$, i.e., $\tau_s:=\{p\in M| \tau(p)=s\}$.
\begin{remark} 	
For an inextendible causal curve $\gamma: (a , b)\rightarrow M$, by the completeness of $h$, we have $\lim\limits_{t\rightarrow a^+}\tau (\gamma(t))=-\infty$ and $\lim\limits_{t\rightarrow b^-}\tau (\gamma(t))=\infty$.
\end{remark}

   For $s\in \mathbb{R}$, each $\tau_s$ is a Cauchy surface. For any $p\in I^-[\tau_s]$, let $d(p, \tau_s):=\sup\limits_{z\in \tau_s} d(p,z)$. Similarly, let $d(\tau_s, p):=\sup\limits_{z\in \tau_s} d(z,p)~\text{for any}~ p\in I^+[\tau_s]$.
\begin{remark} 	
For any $s\in \mathbb{R}$ and any $p \in I^-[\tau_s]$, $d(p, \tau_s)$ is finite. Indeed, by (\ref{E24}), for any future-directed causal curve $\gamma$ connected $p$ to $\tau_s$, $\sqrt{-g(\dot{\gamma}, \dot{\gamma})} \leq  d \tau (\dot{\gamma})$. Integrating on both sides,  we have $L(\gamma)\leq s-\tau(p)$. Taking supremum, we get $d(p, \tau_s)\leq s- \tau(p)$.
\end{remark}

\begin{definition}\label{Def1}
For fixed $s>0$, $x_0 \in \tau_0$ and for any $p\in I^-[\tau_s]$, define:
\begin{itemize}
\item $u^+_{\tau_s}(p):= d(x_0, \tau_s)-d(p,\tau_s)=\sup\limits_{z\in \tau_s} d(x_0,z)-\sup\limits_{z\in \tau_s} d(p,z)$,
\item $u^+(p):=\lim\limits_{s\rightarrow +\infty}u^+_{\tau_s}(p)$.
\end{itemize}
Similarly, for fixed $s<0$, $x_1 \in  \tau_0$ and for any $p\in I^+[\tau_s]$, define:
\begin{itemize}
\item$ u^-_{\tau_s}(p):= d(\tau_s,x_1)-d(\tau_s,p)= \sup\limits_{z\in \tau_s} d(z,x_1)-\sup\limits_{z\in \tau_s} d(z,p)$,
\item $u^-(p):=\lim\limits_{s\rightarrow -\infty}u^-_{\tau_s}(p)$.
\end{itemize}
\end{definition}
 By the definition of $u^+_{\tau_s}(p)$, the continuity of $u^+_{\tau_s}(p)$ can be derived from the continuity of $d(p, \tau_s)$. Obviously, $d(p,\tau_s)$ is a lower semicontinuous function on $I^-[\tau_s]$ since it is the supremum of a family of continuous functions. To get the continuity of $d(p,\tau_s)$, we only need to prove the upper semicontinuity. For this purpose, we introduce the following two lemmas. Throughout this paper, for a subset $A\subseteq M$, we use $\text{int} (A)$ to denote the set of all the interior points of $A$.

\begin{lemma}[\protect{\cite[Lemma 14.40]{Oneill1983}}]\label{lem25}
 Let $\Sigma$ be an achronal set. If $x\in \textit {int}  D(\Sigma)\setminus I^+(\Sigma)$, then $J^+(x)\cap D^-(\Sigma)$ is compact.
\end{lemma}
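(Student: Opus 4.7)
The plan is to establish compactness of the set $K := J^+(x) \cap D^-(\Sigma)$ by sequential compactness, combined with the Limit Curve Lemma. Given a sequence $\{y_n\} \subset K$, I would pick for each $n$ a future-directed causal curve $\gamma_n$ from $x$ to $y_n$, parametrized by $h$-arclength as $\gamma_n : [0, L_n] \to M$. Since causal tangent vectors have locally bounded $h$-length and $h$ is complete, the Limit Curve Lemma supplies, after passing to a subsequence, one of two alternatives: (a) the $L_n$ stay bounded and $\gamma_n$ converges uniformly to a causal curve $\gamma : [0, L] \to M$ with $\gamma(0) = x$ and endpoint $y = \lim y_n$; or (b) $L_n \to \infty$ and $\gamma_n$ converges, uniformly on compact intervals, to a future-inextendible causal curve $\gamma : [0, \infty) \to M$ starting at $x$.

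In case (a), I would verify $y \in K$. The inclusion $y \in J^+(x)$ is immediate from the limit curve, while $y \in D^-(\Sigma)$ follows from the closedness of $D^-(\Sigma)$: its complement, the set of points admitting a future-inextendible causal curve that misses $\Sigma$, is open by a standard perturbation argument using the openness of $I^\pm$.

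Case (b) is the substantive step, and is where the hypothesis $x \in \mathrm{int}(D(\Sigma)) \setminus I^+(\Sigma)$ is essential. First I would deduce $x \in D^-(\Sigma)$: for an interior point of $D(\Sigma)$, being outside $I^+(\Sigma)$ excludes membership in $D^+(\Sigma)\setminus(D^-(\Sigma)\cup\Sigma)$, since any interior point of $D^+(\Sigma)$ lying strictly to the future of $\Sigma$ must already lie in $I^+(\Sigma)$. Granted $x \in D^-(\Sigma)$, the future-inextendible curve $\gamma$ must meet $\Sigma$ at some first parameter $T$ at a point $p$. For large $n$, uniform convergence of $\gamma_n$ to $\gamma$ on $[0, T+1]$ forces $\gamma_n$ to cross $\Sigma$ near $p$ as well; beyond this crossing $\gamma_n$ enters $J^+(\Sigma) \setminus \Sigma$, so $y_n \in J^+(\Sigma)\setminus\Sigma$ for large $n$. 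However, any future-inextendible extension from such a $y_n$ cannot return to $\Sigma$ by achronality of $\Sigma$, contradicting $y_n \in D^-(\Sigma)$ unless $y_n$ lies close to $p$; but this forces $\{y_n\}$ to have a convergent subsequence, contradicting the defining feature of case (b).

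The hardest part is the structural dissection in case (b). Distinguishing $D^-(\Sigma)$ from $D^+(\Sigma)$ for an interior point of $D(\Sigma)$, and the ``no-return'' consequence invoked at the end, both reduce ultimately to the fact that a future-directed causal curve cannot cross an achronal set twice. Making these reductions rigorous requires careful manipulation of the Cauchy horizons $H^{\pm}(\Sigma)$ and of the precise relation $\mathrm{int}(D(\Sigma)) = D(\Sigma) \setminus (H^+(\Sigma) \cup H^-(\Sigma))$, which I expect to be the most technical portion of the argument.
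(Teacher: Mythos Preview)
The paper does not prove this lemma; it is quoted directly from O'Neill's textbook (Lemma 14.40) and used as a black box, so there is no in-paper argument to compare your proposal against. Your limit-curve strategy is the standard route to such results and is close in spirit to O'Neill's own treatment, which hinges on the fact that $\mathrm{int}\,D(\Sigma)$ is strongly causal and then runs a compactness argument internal to that region.

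That said, your case~(a) contains a genuine gap. The assertion that $D^-(\Sigma)$ is closed, justified by a ``standard perturbation argument using the openness of $I^\pm$,'' is false in general even for closed achronal $\Sigma$. Take $M=\mathbb{R}^{1,1}\setminus\{(0,1)\}$ with $\Sigma=\{t=2\}$: for every $\epsilon>0$ the point $(1+\epsilon,0)$ lies in $D^-(\Sigma)$ (the deleted point is outside its future cone), yet the limit $(1,0)$ does not, since the left-going null ray from $(1,0)$ is future-inextendible in $M$ and never reaches $\Sigma$. Your perturbation heuristic fails precisely when the escaping curve is a null geodesic: there is no open set of initial points from which one can follow ``the same'' null ray. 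Since the lemma as stated does not assume global hyperbolicity of $M$ (and does not even assume $\Sigma$ closed), you cannot lean on closedness of $D^-(\Sigma)$; you must instead exploit the hypothesis $x\in\mathrm{int}\,D(\Sigma)$ directly, e.g.\ by showing the limit point $y$ still lies in $\mathrm{int}\,D(\Sigma)\cap J^-(\Sigma)$, or by working entirely inside the intrinsically globally hyperbolic region $\mathrm{int}\,D(\Sigma)$.

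Your case~(b) is essentially correct, but the closing sentences are muddled. Once you have $\gamma_n(T')\in I^+(\Sigma)$ for some fixed $T'>T$ and all large $n$, it follows that $y_n=\gamma_n(L_n)\in I^+(\Sigma)$; combined with $y_n\in D^-(\Sigma)$ this produces $p,r\in\Sigma$ with $p\ll y_n\leq r$, hence $p\ll r$, contradicting achronality outright. The hedge ``unless $y_n$ lies close to $p$'' and the appeal to ``the defining feature of case~(b)'' are unnecessary.
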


\begin{remark}\label{remark1}
 For any $x\in D^-(\Sigma)$, we set $I^+_{\Sigma}(x)=\Sigma\cap I^+(x)$, $J^+_{\Sigma}(x)=\Sigma\cap J^+(x)$. Then $J^+_{\Sigma}(x)=\overline{I^+_{\Sigma}(x)}$ is compact if $D(\Sigma)$ is an open subset of $M$ by Lemma \ref{lem25}. Moreover, if $\Sigma$ is a Cauchy surface, there exists a future-directed maximal timelike geodesic segment $\gamma_{x, \Sigma}:[0, d(x, \Sigma)]\rightarrow M$ connected $x$ to $\Sigma$ such that $\gamma_{x, \Sigma}(0)=x$ and $d(x, \Sigma)= L(\gamma_{x, \Sigma})$.
\end{remark}

\begin{lemma}[\protect{\cite[Lemma 3.2]{EschenburgJDG1988}}]\label{lem10}
Let $U$ be a causal convex neighborhood of $p$. If $U$ is small enough, there is a constant $C>0$, $T>0$ with the following property: If $p\in U$ and $s>2T$ then every maximal unit speed geodesic segment $\alpha$ from $p$ to $\tau_s$ satisfies $\|\alpha'(0)\|_h\leq C$.
\end{lemma}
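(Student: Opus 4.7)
My plan is to argue by contradiction. Suppose no such pair $(C,T)$ exists. Diagonalising, I obtain sequences $p_n\in U$, $s_n\to+\infty$, and maximising future-directed unit Lorentzian-speed geodesics $\alpha_n:[0,L_n]\to M$ from $p_n$ to $\tau_{s_n}$ with $K_n:=\|\alpha_n'(0)\|_h\to\infty$. After shrinking $U$ to be relatively compact and extracting subsequences, $p_n\to p_0\in\overline U$ and the $h$-unit rescalings $W_n:=\alpha_n'(0)/K_n$ converge to some $W_0\in T_{p_0}M$ with $\|W_0\|_h=1$; dividing the identity $g(\alpha_n'(0),\alpha_n'(0))=-1$ by $K_n^2$ forces $g(W_0,W_0)=0$, so $W_0$ is a future-directed null vector.

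Next, I would establish a uniform lower bound $L_n\geq c(s_n-\tau(p_n))$ via a fixed timelike competitor: the integral curve through $p_n$ of a smooth future-directed timelike vector field $X$ with $-g(X,X)\equiv 1$ on a precompact neighborhood of $p_0$, extended outside that neighborhood by any fixed future-directed timelike continuation and estimated using the steep bound (\ref{E24}). Combined with the integral inequality $H_n:=\int_0^{L_n}\|\alpha_n'(t)\|_h\,dt\leq s_n-\tau(p_n)$, obtained by integrating (\ref{E24}) along $\alpha_n$ itself, this yields a uniform bound $H_n/L_n\leq 1/c$ on the average Riemannian speed of $\alpha_n$ over $[0,L_n]$.

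Finally, I would derive the contradiction by combining this average bound with the limit-curve theorem (in the style of \cite[Proposition 3.31]{Beem 1996}) applied to the $h$-arclength reparametrisations $\tilde\alpha_n$. A subsequence of $\tilde\alpha_n$ converges locally uniformly to a future inextendible causal curve $\beta$ starting at $p_0$ with initial tangent proportional to $W_0$, hence $\beta$ is a future-directed null geodesic along which $d(p_0,\cdot)\equiv 0$. On the other hand the maximising property $d(p_n,\alpha_n(L_n))=L_n\geq c(s_n-\tau(p_n))\to\infty$, together with the continuity of $d$ in globally hyperbolic space-times, should force the existence of a point $q\in\beta$ with $d(p_0,q)>0$, a direct contradiction. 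The main obstacle is this last matching step: the Lorentzian parameter $L_n$ of $\alpha_n$ goes to $\infty$ while the $h$-arclength $H_n$ is only controlled by $s_n-\tau(p_n)$, so one must carefully track which portion of $\alpha_n$ survives to a point of $\beta$ in the limit, and then rule out $d(p_0,q)=0$ by a local competitor argument near $q$. Getting this quantitative comparison between the blowing-up Riemannian scale $K_n$, the Lorentzian length $L_n$, and the $h$-arclength $H_n$ tight enough to produce an honest contradiction is where the technical weight of the proof lies.
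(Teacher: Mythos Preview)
The paper does not supply a proof of this lemma at all; it is quoted directly from Eschenburg \cite[Lemma~3.2]{EschenburgJDG1988} and used as a black box. So there is no in-paper argument to compare your attempt against, and the relevant benchmark is Eschenburg's original reasoning.

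Your contradiction setup --- extracting $p_n\to p_0$, rescaling $\alpha_n'(0)/K_n\to W_0$, and identifying $W_0$ as future null --- is fine and standard. The real problem is your step toward the lower bound $L_n\geq c(s_n-\tau(p_n))$. You propose to get it from a unit-timelike competitor $X$ ``estimated using the steep bound (\ref{E24}).'' But (\ref{E24}) reads $d\tau(V)\geq\sqrt{-g(V,V)}$; along your competitor with $-g(X,X)=1$ this gives $d\tau(X)\geq 1$, so the integral curve reaches $\tau_{s_n}$ after Lorentzian time \emph{at most} $s_n-\tau(p_n)$. That is an upper bound on the competitor's length, the opposite of what you need. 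A lower bound of the form $L_n\geq c(s_n-\tau(p_n))$ would require $d\tau\leq 1/c$ along a curve escaping to infinity, and steepness gives no such control; indeed $d(p,\tau_s)$ need not grow linearly (or even unboundedly) in $s$. With step 2 gone, your ratio bound $H_n/L_n\leq 1/c$ and the intended contradiction in step 6 both collapse. Note also that even granting $L_n\to\infty$, your step 6 does not obviously yield a point $q\in\beta$ with $d(p_0,q)>0$: for each fixed $h$-parameter $\sigma$ one gets $d(p_n,\tilde\alpha_n(\sigma))\to d(p_0,\beta(\sigma))$, and this limit can be zero along the entire null geodesic $\beta$ if $\beta$ never leaves $\partial J^+(p_0)$; the large values of $d$ along $\alpha_n$ live at $h$-parameters tending to infinity and need not be captured by the limit curve.

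Eschenburg's argument avoids sending $s\to\infty$ altogether and is purely local. The steep bound in the form $d\tau(\alpha')\geq|\alpha'|_h$ forces the crossing $\alpha\cap\tau_T$ to lie in a fixed compact set $K_T$ for any intermediate level $T$; one then fixes $q$ with $\overline U\subset I^-(q)$ and compares the two decompositions $d(p,\tau_s)\geq d(p,q)+d(q,\tau_s)$ and $d(p,\tau_s)=t_1+d(\alpha(t_1),\tau_s)$ to extract a uniform lower bound on the Lorentzian crossing time $t_1$. This reduces the question to maximizing segments of length bounded below between two fixed compacta, where the $h$-bound on $\alpha'(0)$ follows by compactness.
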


Lemma \ref{lem10} shows that the set of initial tangent vectors of these maximal geodesic segments stays in a compact subset of tangent bundle $TU$.

Now suppose $d(p,\tau_s)$ is not upper semicontinuous at $p$ in $U$. Then there exists a sequence $p_n \in U$ with $p_n\rightarrow p$ but $\lim\limits_{n\rightarrow\infty}d(p_n, \tau_s)>d(p, \tau_s)$. Thanks to Remark \ref{remark1}, let $\alpha_n$ be a maximal timelike geodesic segment from $p_n$ to $\tau_s$ and reparametrize it by $h$-arc length. By the Limit Curve Lemma \cite[Lemma 14.2]{Beem 1996} and Lemma \ref{lem10}, there exists a subsequence ${\alpha_{n_k}}$ which converges to a timelike curve $\alpha$ from $p$ to $\tau_s$, so we have
\[
d(p_n, \tau_s)=L(\alpha_{n_k})\rightarrow L(\alpha)\leq d(p, \tau_s).
\]
This contradiction implies the continuity of $u^+_{\tau_s}(p)$ on $U$.

\subsection{A priori estimate of $u^+$}

  In this subsection we will show that both $u^+$ and $u^-$ are well-defined. Actually, we only need to prove that $u^+(p)$ is well-defined and $u^-(p)$ can be treated analogously. By the Arzela-Ascoli theorem and $u^+_{\tau_s}(x_0)=0$, it is sufficient to show that $u^+_{\tau_s}$ are locally equi-Lipschitz.
\begin{lemma}\label{lem1}
For $p, q\in I^-[\tau_s]$ with $p\leq q$, the function $u^+_{\tau_s}$ satisfies
\[
    u^+_{\tau_s}(q)- u^+_{\tau_s}(p)\geq d(p, q).
\]
\end{lemma}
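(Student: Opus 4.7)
The plan is to unfold the definition of $u^+_{\tau_s}$ so that the inequality becomes a statement about the Lorentzian distance to the Cauchy surface $\tau_s$. Since $u^+_{\tau_s}(q)-u^+_{\tau_s}(p)=d(p,\tau_s)-d(q,\tau_s)$ (the $d(x_0,\tau_s)$ terms cancel), the claim is equivalent to the reverse triangle inequality in its ``distance to a set'' form:
\[
      d(p,\tau_s) \;\geq\; d(p,q)+d(q,\tau_s).
\]

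To prove this, I will apply the pointwise reverse triangle inequality recalled at the start of Section \ref{sec:2}, namely $d(p,z)\geq d(p,q)+d(q,z)$ whenever $p\leq q\leq z$, and then pass to the supremum over $z\in\tau_s$. Concretely, I would restrict the supremum defining $d(q,\tau_s)=\sup_{z\in\tau_s}d(q,z)$ to those $z\in\tau_s$ satisfying $q\leq z$ (points not in $J^+(q)$ contribute $d(q,z)=0$ and can be discarded); for any such $z$ the hypothesis $p\leq q$ gives $p\leq q\leq z$, and then the reverse triangle inequality yields $d(p,z)\geq d(p,q)+d(q,z)$. Taking the supremum over $z\in\tau_s\cap J^+(q)$ on the right, and using that the supremum on the left is bounded below by the supremum over the smaller set $\tau_s\cap J^+(q)\subseteq\tau_s$, produces the desired inequality.

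The only point requiring a little care is ensuring that the restricted supremum really equals $d(q,\tau_s)$ and is finite and attained in an appropriate sense. The hypothesis $q\in I^-[\tau_s]$ guarantees that $\tau_s\cap J^+(q)\neq\emptyset$, while the finiteness of $d(q,\tau_s)$ was already established in the remark following equation \eqref{E24} via the steepness condition $|d\tau(V)|\geq \sqrt{-g(V,V)}$. No use of global hyperbolicity beyond this is needed. I do not foresee a genuine obstacle here; the lemma is essentially a bookkeeping translation of the reverse triangle inequality, and the main purpose of stating it explicitly is that it will be the key monotonicity ingredient used in the next step to deduce that the family $\{u^+_{\tau_s}\}$ is locally equi-Lipschitz (combined with a symmetric upper bound) and hence that $u^+$ is well defined via Arzel\`a--Ascoli.
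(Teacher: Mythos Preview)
Your proposal is correct and follows essentially the same approach as the paper: unfold the definition of $u^+_{\tau_s}$, apply the pointwise reverse triangle inequality $d(p,q)+d(q,z)\leq d(p,z)$ for $p\leq q\leq z\in\tau_s$, and pass to the supremum over $z$. The paper's write-up keeps the $d(x_0,\tau_s)$ term throughout rather than cancelling it first, but the content is identical.
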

\begin{proof}
By the reverse triangle inequality of Lorentzian distance function $d$, for any $q\leq z\in \tau_s$, we have
\[
d(p, q)+d(q, z)\leq d(p,z).
\]
Then by the definition of $u^+_{\tau_s}$, it is easy to see that
\begin{align*}
 &u^+_{\tau_s}(q)-d(p,q)\\
=&d(x_0, \tau_s)-d(q, \tau_s)-d(p,q)\\
=&                 d(x_0, \tau_s)-\sup\limits_{z\in \tau_s} d(q,z)-d(p,q)\\
\geq & d(x_0, \tau_s)-\sup\limits_{z\in \tau_s} d(p,z)\\
=&u^+_{\tau_s}(p).
\end{align*}\qed
\end{proof}

\begin{proposition}\label{the1}
$u^+_{\tau_s}(x)$ are locally equi-Lipschitz functions, and there exists a subsequence $s_n$ and a Lipschitz function $u^+(x)$ such that
\[
\lim\limits_{s_n \rightarrow +\infty} u^+_{\tau_{s_n}}(x)=u^+(x) ~~\text{for any} ~~x\in M.
\]
Moreover, $u^+(x)$ is a locally semiconcave function on $M$. In particular, when $s_n \rightarrow \infty$, $I^-[\tau_{s_n}] \rightarrow M$.
\end{proposition}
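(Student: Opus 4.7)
The plan rests on the support-function method. Fix an arbitrary point $p_0\in M$ and take a causally convex neighbourhood $U$ of $p_0$ to which Lemma \ref{lem10} applies, producing constants $C,T>0$ such that for every $p\in U$ and every $s>2T$ each unit-speed maximal timelike geodesic $\alpha_p^s$ from $p$ to $\tau_s$ (whose existence, together with a terminal event $z_p^s\in\tau_s$ realising the supremum $d(p,\tau_s)=d(p,z_p^s)$, is granted by Remark \ref{remark1}) satisfies $\|\dot\alpha_p^s(0)\|_h\le C$. Thus the initial velocities remain in a compact subset of $TU$, and this uniform control is what I will feed into all subsequent estimates.

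For each such pair $(p,s)$ I would introduce the support function $\psi^{p,s}(q):=d(x_0,\tau_s)-d(q,z_p^s)$, defined on a small neighbourhood $V_p\subseteq U$ of $p$ on which $q\in I^-(z_p^s)$ (using openness of $I^-$ in the globally hyperbolic setting). Because $\alpha_p^s$ is maximal, no cut point can occur before $z_p^s$, so on $V_p$ the Lorentzian exponential based at $z_p^s$ is a local diffeomorphism and $q\mapsto d(q,z_p^s)$ is smooth; combined with the bound $\|\dot\alpha_p^s(0)\|_h\le C$ this yields $h$-gradient and $h$-Hessian bounds depending only on the geometry of $U$ and on the compact set containing $\dot\alpha_p^s(0)$, hence uniformly in $p\in U$ and $s>2T$. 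By construction $\psi^{p,s}(p)=u^+_{\tau_s}(p)$, while $u^+_{\tau_s}(q)=d(x_0,\tau_s)-\sup_{z\in\tau_s}d(q,z)\le \psi^{p,s}(q)$ for $q\in V_p$. Hence $u^+_{\tau_s}$ is locally the infimum of a family of functions that are uniformly $C'$-semiconcave with a common constant $C'$, giving both a local equi-Lipschitz estimate and uniform local semiconcavity of $u^+_{\tau_s}$ independent of $s$.

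With $u^+_{\tau_s}(x_0)=0$ and local equi-Lipschitzness in hand, the family $\{u^+_{\tau_s}\}$ is equi-bounded on each compact subset of $M$. Arzelà--Ascoli together with a standard diagonal argument along a compact exhaustion produces a subsequence $s_n\to+\infty$ for which $u^+_{\tau_{s_n}}$ converges uniformly on compact sets to a locally Lipschitz limit $u^+$, and since the semiconcavity inequality in local coordinates is closed under uniform limits, $u^+$ inherits the uniform local semiconcavity. For the final claim $I^-[\tau_{s_n}]\to M$, fix $p\in M$: the Cauchy-surface property of $\tau_{s_n}$ together with the remark after (\ref{E24}) (which forces $\tau(\gamma(t))\to+\infty$ along any future-inextendible timelike curve $\gamma$ from $p$) guarantees that such a $\gamma$ meets $\tau_{s_n}$ as soon as $s_n>\tau(p)$, so $p\in I^-[\tau_{s_n}]$ for all large $n$.

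The principal obstacle is the uniform $C^2$-control of the support functions $\psi^{p,s}$. It hinges on Lemma \ref{lem10} keeping the initial velocities $\dot\alpha_p^s(0)$ in a fixed compact set of $TU$, and on the maximality of $\alpha_p^s$ preventing any focal or conjugate event before $z_p^s$; I will need to verify that the Hessian estimate for $q\mapsto d(q,z_p^s)$ at $q=p$ can be extracted uniformly for all admissible $(p,s)$ simultaneously, rather than one pair at a time, which is the subtle point the rest of the argument depends on.
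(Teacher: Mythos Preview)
Your equi-Lipschitz argument and the Arzel\`a--Ascoli passage are fine and match the paper's approach. The gap is in the uniform semiconcavity step. You take the support function $\psi^{p,s}(q)=d(x_0,\tau_s)-d(q,z_p^s)$ with $z_p^s\in\tau_s$ the \emph{terminal} endpoint of the maximal geodesic, and then assert that the bound $\|\dot\alpha_p^s(0)\|_h\le C$ together with the geometry of $U$ gives a uniform $h$-Hessian bound on $q\mapsto d(q,z_p^s)$. That inference is not justified: the Hessian of a Lorentzian distance function at $p$ is governed by the Jacobi equation along the \emph{entire} segment from $p$ to $z_p^s$, so the comparison estimate involves the timelike sectional curvatures along $\alpha_p^s$ and its length, not merely the local geometry of $U$ and the initial velocity. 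As $s\to\infty$ the segment $\alpha_p^s$ becomes arbitrarily long and may traverse regions of $M$ with uncontrolled curvature, so no uniform Hessian bound follows. You correctly flag this as ``the subtle point'', but the proposal does not resolve it.

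The paper's device for closing this gap is to insert an \emph{intermediate} point $p_{1,s}$ on $\alpha_p^s$ and use instead the support function $u^+_{p,s}(x)=d(x_0,\tau_s)-d(x,p_{1,s})-d(p_{1,s},z_p^s)$ (Definition~\ref{Def2}). A Limit Curve argument (Lemma~\ref{lem24}) shows that along a suitable subsequence $s_n$ the points $p_{1,s_n}$ can be chosen in a fixed compact subset of $M$, with a common neighbourhood $O$ of $p$ contained in $I^-(p_{1,s_n})$. Now the segment from $x\in O$ to $p_{1,s_n}$ lies in a fixed compact region, so both its length and the relevant sectional curvatures are uniformly bounded; standard comparison (as in \cite[Theorem~5.10]{JinGD2018}) then gives $D^2 d(x,p_{1,s_n})\ge -c(O)I$ with $c(O)$ independent of $s_n$, and Lemma~\ref{lem9} yields uniform local semiconcavity. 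Replacing $z_p^s$ by $p_{1,s}$ is the missing ingredient in your argument.
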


In order to prove Proposition \ref{the1}, we need to introduce the concept of support function.

\begin{definition}\label{Def2}
 For each $s>0$, $p\in I^-[\tau_s]$, there exists $q_s\in \tau_s$ and a future-directed timelike geodesic $\gamma$ such that $\gamma(0)=p$, $\gamma(d(p, q_s))=q_s$ and $d(p, q_s)= d(p,\tau_s)=L(\gamma)$. For any fixed $p_{1,s}\in \gamma$, we choose a suit neighborhood $O$ of $p$ such that $O\subseteq I^-(p_{1,s})$. Define
\begin{align*}
u^+_{p, s}: O \rightarrow [-\infty, +\infty]
\end{align*}
with
\begin{align*}
u^+_{p, s}(x)= d(x_0,\tau_s)-d(x,p_{1,s})-d(p_{1,s},q)~~\text{for any}~~x\in O.
\end{align*}
\end{definition}

Readers may wonder about the dependence of  $u^+_{p, s}$ on $p_{1, s}$ here, but by the following lemma, $p_{1, s}$ can be chosen in a compact subset of $M$. Therefore, the choice of $p_{1, s}$ does not prevent the equi-Lipschtiz property of $u^+_{p, s}$.

\begin{lemma}\label{lem24}
There exists a subsequence $s_n$ such that for each $s_n$ the $p_{1,s_n}$ in Definition \ref{Def2} can be chosen in a compact subset of $M$ and the neighborhood $O$ in Definition \ref{Def2} does not depend on $s_n$ for all $s_n$ large enough.
\end{lemma}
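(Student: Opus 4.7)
The plan is to use Lemma~\ref{lem10} to control the initial tangent vectors of the maximal geodesics realizing $d(p,\tau_s)$, extract a convergent subsequence of these initial vectors, and then truncate each geodesic at a fixed small parameter to produce the points $p_{1,s_n}$.

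First I would fix the causal convex neighborhood $U$ of $p$ and the constants $C,T$ provided by Lemma~\ref{lem10}. For every $s>2T$ let $\gamma_s:[0,d(p,\tau_s)]\rightarrow M$ be a unit-speed maximal timelike geodesic from $p$ to $\tau_s$; then $\|\dot\gamma_s(0)\|_h\leq C$, so the initial vectors $\{\dot\gamma_s(0)\}_{s>2T}$ lie in a compact subset of $T_pM$ consisting of future-directed timelike vectors of unit Lorentzian norm. Along any fixed future-directed timelike curve starting at $p$, the $h$-length up to $\tau_s$ grows without bound as $s\to\infty$, so $d(p,\tau_s)\to\infty$; in particular $d(p,\tau_{s})\geq 2t_0$ for some fixed $t_0>0$ once $s$ is large.

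Next I would extract a subsequence $s_n\to\infty$ such that $\dot\gamma_{s_n}(0)\to V$ for some future-directed unit timelike $V\in T_pM$. Because the geodesic exponential depends continuously on the initial data and $t_0<d(p,\tau_{s_n})$ for large $n$, each $\gamma_{s_n}$ is defined on $[0,t_0]$, and setting $p_{1,s_n}:=\gamma_{s_n}(t_0)$ and $p_1^\ast:=\exp_p(t_0V)$ we obtain $p_{1,s_n}\to p_1^\ast$. Thus $\{p_{1,s_n}:n\in\mathbb N\}\cup\{p_1^\ast\}$ is a compact subset of $M$ containing all the $p_{1,s_n}$, and each $p_{1,s_n}$ lies in $I^+(p)$ since $\gamma_{s_n}$ is timelike.

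Finally I would produce the common neighborhood $O$. Since $p\ll p_1^\ast$, choose an open neighborhood $O$ of $p$ whose closure $\bar O$ is compact and contained in $I^-(p_1^\ast)$. The chronological relation $\{(x,y):x\ll y\}$ is open in $M\times M$, and I claim that $\bar O\subseteq I^-(p_{1,s_n})$ for all sufficiently large $n$. If not, there would exist $n_k\to\infty$ and $x_{n_k}\in\bar O$ with $x_{n_k}\not\ll p_{1,s_{n_k}}$; passing to a subsequence by compactness of $\bar O$, we get $x_{n_k}\to x_\infty\in\bar O\subseteq I^-(p_1^\ast)$, while the closedness of the complement of the chronological relation forces $x_\infty\not\ll p_1^\ast$, a contradiction. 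Hence for all $n$ large enough, $O\subseteq I^-(p_{1,s_n})$, and this $O$ is the desired common neighborhood.

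The main obstacle I expect is the uniformity in the third step: choosing $O$ independently of $n$. The compactness of $\bar O$ together with the openness (equivalently, the complement's closedness) of the chronological relation is exactly what converts the pointwise statement $p_1^\ast\in I^+(x)$ for each $x\in \bar O$ into a simultaneous statement for all $p_{1,s_n}$ with $n$ large, so the argument hinges on being willing to shrink $O$ once at the beginning rather than along the sequence.
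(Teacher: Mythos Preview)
Your overall strategy mirrors the paper's: extract a convergent subsequence of the maximal geodesics $\gamma_s$, truncate each at a fixed parameter to obtain $p_{1,s_n}$, and then use openness of the chronological relation (equivalently, inner continuity of $I^-$, which is what the paper invokes) to produce a single $O$ that works for all large $n$. The chief difference is the parametrization: the paper reparametrizes the $\gamma_s$ by $h$-arc length and applies the Limit Curve Lemma, whereas you keep the Lorentzian unit-speed parametrization and rely on Lemma~\ref{lem10} together with continuous dependence of geodesics on initial data.

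Two points need attention. First, the sentence ``the $h$-length up to $\tau_s$ grows without bound as $s\to\infty$, so $d(p,\tau_s)\to\infty$'' is a non sequitur: $h$-length does not control Lorentzian distance, and in a future-incomplete space-time $d(p,\tau_s)$ need not diverge. Fortunately you only use the much weaker fact that $d(p,\tau_s)$ is eventually bounded below by a positive constant, which is immediate since $d(p,\tau_{s_0})>0$ for any $s_0>\tau(p)$ and $d(p,\tau_s)$ is nondecreasing in $s$.

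Second, and more substantively, you write $p_1^\ast:=\exp_p(t_0V)$ without checking that the geodesic with initial velocity $V$ exists up to parameter $t_0$. This does \emph{not} follow from each $\gamma_{s_n}$ being defined on $[0,t_0]$: the maximal existence time of a geodesic is only lower semicontinuous in the initial vector, so a sequence of long-lived geodesics can converge to a short-lived one (for instance, on $\mathbb{R}^2\setminus\{(1,0)\}$ with initial directions $(1,1/n)\to(1,0)$). The paper's use of $h$-arc length and the Limit Curve Lemma sidesteps this issue entirely, since the limit curve there is automatically defined on all of $[0,\infty)$. Within your framework the clean repair is to reverse the order: first extract $V$, then choose any $t_0\in(0,T^\ast(V))$. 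Lower semicontinuity of $T^\ast$ then guarantees $\gamma_{s_n}$ is defined on $[0,t_0]$ for all large $n$, and continuous dependence yields $\gamma_{s_n}(t_0)\to\exp_p(t_0V)$ as you want. With this adjustment your argument is correct and essentially equivalent to the paper's.
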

\begin{proof}
For $s$ large enough and any $x\in I^-[\tau_s]$, we use $\gamma_{x, s}$ to denote a future-directed maximal geodesic segment between $x$ to $\tau_s$. For each $s$, we extend $\gamma_{x, s}$ to be a future-directed inextendible timelike curve and reparameterize it by $h$-arc length. These reparametrzed curves are denoted by $\tilde{\gamma}_{x, s}$.  By the fact that $\tilde{\gamma}_{x, s}(0)=\gamma_{x, s}(0)=x$ and Limit Curve Lemma \cite[Lemma 14.2]{Beem 1996}, there exists a subsequence $s_n$ and a future-directed inextendible causal curve $\tilde{\gamma}_x: [0, \infty)\rightarrow M$ such that $\tilde{\gamma}_{x, s_n}$ converges to $\tilde{\gamma}_x$ uniformly on any compact subset of $[0, \infty)$.  We choose a constant $T>0$ and a neighborhood $O$ of $x$ such that $\bar{O}\subseteq I^-(\tilde{\gamma}_x(T))$. Since $I^-$ is inner continuous \cite[p. 59]{Beem 1996}, there exists a neighborhood $U$ of $I^-(\tilde{\gamma}_x(T))$ such that $\bar{O} \subseteq I^-(q)$ for any $q\in U$.  We define $p_{1,s_n}:= \tilde{\gamma}_{x, s_n}(T)$. Since $\tilde{\gamma}_{x, s_n}(T)$ converges to $\tilde{\gamma}_x(T)$, for sufficiently large $s_n$, the sequence $p_{1,s_n}$ always stays in a compact neighborhood of $\tilde{\gamma}_x(T)$ and the neighborhood $O$ is a subset of  $I^-(\tilde{\gamma}_{x, s_n}(T))$. \qed
\end{proof}

  Due to the reverse triangle inequality, it is easy to see that for any $s>0$, $u^+_{p, s}$ given by Definition \ref{Def2} is a continuous upper support function for $u^+_{\tau_s}(x)$ at $p$, i.e., $u^+_{p, s}(x)\geq u^+_{\tau_s}(x)$ for all $x$ near $p$ with equality when $x=p$. With the help of $u^+_{p, s}(x)$, we can prove that $u^+_{\tau_s}(x)$ are locally equi-Lipschitz.

Let $d_s(x)= d(x, \tau_s)$, it is sufficient to show that the functions $u^+_{\tau_s}$ are Lipschitz continuous with a uniform Lipschitz constant for $x\in O$ and $s$ large enough. Since
\[
u^+_{\tau_s}(x)- u^+_{\tau_s}(y)=d_s(y)-d_s(x),
\]
we only need to show that $d_s$ are equi-Lipschitz coninuous functions. To prove this result, the following lemma is useful.
\begin{lemma}[\protect{\cite[Lemma 14.20]{Beem 1996}}]\label{lem8}
Let U be an open convex domain in $\mathbb{R}^n$ and $f:U\rightarrow \mathbb{R}$ a continuous function. Suppose that for each $p$ in $U$ there is a smooth lower support function $f_p$ defined in a neighborhood of $p$ such that $\|d(f_p)p\|_h<L$. Then $f$ is Lipschitz continuous with Lipschitz constant $L$, i.e., for all $x, y\in U$ we have
\begin{equation}\label{E9}
           |f(x)-f(y)|\leq L|x-y|_h.
\end{equation}
\end{lemma}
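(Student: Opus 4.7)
The plan is to reduce the global Lipschitz bound to a one-dimensional statement along the segment connecting the two points using convexity of $U$, then to patch the local control coming from the smooth lower support functions into a global estimate via compactness of $[0,1]$.

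Concretely, fix $x,y\in U$ and set $\gamma(t)=x+t(y-x)$ for $t\in[0,1]$, $\phi(t)=f(\gamma(t))$; by convexity $\gamma([0,1])\subseteq U$, so the goal is to prove $|\phi(1)-\phi(0)|\leq L|y-x|_h$. Fix $t_0\in [0,1]$, let $p=\gamma(t_0)$, and let $f_p$ be the smooth lower support function guaranteed by hypothesis. Since $f_p\leq f$ on a neighborhood of $p$ with $f_p(p)=f(p)$, for $t$ sufficiently close to $t_0$,
\begin{equation*}
\phi(t)-\phi(t_0)=f(\gamma(t))-f(p)\geq f_p(\gamma(t))-f_p(p).
\end{equation*}
The hypothesis $\|df_p(p)\|_h<L$ combined with the continuity of $df_p$ yields $\|df_p(q)\|_h\leq L$ on a small ball around $p$; applying the mean value theorem to the smooth function $f_p$ along the segment $[\gamma(t_0),\gamma(t)]$ gives
\begin{equation*}
f_p(\gamma(t))-f_p(p)\geq -L\,|\gamma(t)-\gamma(t_0)|_h=-L|t-t_0|\cdot|y-x|_h.
\end{equation*}

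Reversing the roles of $t$ and $t_0$ — applying the same estimate with the lower support function $f_{\gamma(t)}$ at $\gamma(t)$ — gives the matching bound $\phi(t_0)-\phi(t)\geq -L|t-t_0|\cdot|y-x|_h$. Combining the two yields $|\phi(t)-\phi(t_0)|\leq L|t-t_0|\cdot|y-x|_h$ on a neighborhood of each $t_0\in [0,1]$. Covering the compact interval $[0,1]$ by finitely many such neighborhoods and chaining the local estimates across the partition produces $|\phi(1)-\phi(0)|\leq L|y-x|_h$, which is precisely \eqref{E9}.

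The main obstacle is conceptual rather than technical: a lower support function $f_p$ only directly controls how fast $f$ can decrease away from $p$, not how fast it can increase, so a single application cannot yield a two-sided Lipschitz bound. The resolution is to invoke the hypothesis at \emph{both} endpoints of each infinitesimal step, upgrading the one-sided control into two-sided control. A minor secondary point is that the pointwise bound $\|df_p(p)\|_h<L$ is given only at the base point $p$; the strict inequality is what allows continuity of $df_p$ to propagate the bound $L$ onto a whole neighborhood, so that the mean value theorem can be used cleanly.
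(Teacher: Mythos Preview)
The paper does not prove this lemma; it is quoted verbatim from \cite[Lemma 14.20]{Beem 1996} and used as a black box. So there is no paper-proof to compare against, only the question of whether your argument stands on its own.

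Your strategy is the standard one and is correct in outline: restrict to the segment, use the lower support function at each point to control how fast $f$ can drop, and assemble the local bounds into a global one by compactness. The identification of the ``main obstacle'' --- that a lower support function only bounds decrease, not increase --- and its resolution via support functions at both endpoints is exactly right.

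There is, however, a small technical slip in the sentence ``Combining the two yields $|\phi(t)-\phi(t_0)|\leq L|t-t_0|\cdot|y-x|_h$ on a neighborhood of each $t_0$.'' The reverse bound $\phi(t_0)-\phi(t)\geq -L|t-t_0|\,|y-x|_h$ comes from the support function at $\gamma(t)$, and the radius on which that support function is usable depends on $t$, not on $t_0$. Nothing in the hypotheses prevents this radius from shrinking to zero as $t\to t_0$, so you have not actually produced a single neighborhood of $t_0$ on which the two-sided estimate holds. The same non-uniformity then obstructs the naive finite-cover chaining with the sharp constant $L$.

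The clean repair is to postpone the symmetrization to the very end. From the support function at $\gamma(t_0)$ alone you get $D_+\phi(t_0)\geq -L|y-x|_h$ at every $t_0$ (indeed strictly, since $\|df_p(p)\|_h<L$); the standard monotonicity lemma for continuous functions with nonnegative lower Dini derivative then gives $\phi(1)-\phi(0)\geq -L|y-x|_h$, i.e.\ $f(x)-f(y)\leq L|x-y|_h$. Now swap $x$ and $y$ --- the hypothesis is symmetric --- to obtain the other inequality. This is what your ``reversing the roles'' really buys, just executed globally rather than locally.
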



 Note $-u^+_{p, s}(x)$ defined in Definition \ref{Def2} is a lower support function to $-u^+_{\tau_s}$. To apply  Lemma \ref{lem8}, we need to establish an estimate for the Lipschitz constant of  lower support functions $-u^+_{p, s}(x)$. By the method in \cite[Lemma 3.3]{EschenburgJDG1988}, $u^+_{p, s}(x)$ are equi-Lipschitz on $U$. Let $g_{i,j}$ be the $i, j$-th component of metric $g$ in the local chart $U$, then the Lipschitz constant $L$ of $-u^+_{p, s}(x)$ can be written as $L=GC$, where $G=\sup\{g_{i,j}(x):x\in U, 1\leq i, j\leq n\}$ and $C$ is as in Lemma \ref{lem10}. Then by the Arzela-Ascoli theorem and $u^+_{\tau_s}(x_0)=0$ (for example, see \cite[Lemma 4.4]{Flores Mams 2013}), there exists a subsequence $s_n$ and a locally Lipschitz function $u^+(x)$ such that
\[
\lim\limits_{s_n \rightarrow +\infty} u^+_{\tau_{s_n}}(x)=u^+(x) ~~\text{for any}~~ x\in M.
\]

So far, we have obtained the local Lipschitzness of $u^+$. In the following we will show that $u^+$ is a locally semiconcave function. Local semiconvexity (semiconcavity) is defined as follows.
\begin{definition}[\protect{\cite[Definition 2.6]{CuiJMP2014}}]\label{Def5}
Let $O$ be an open subset of $M$. A function $\psi: O\rightarrow \mathbb{R}$ is said to be semiconcave if there exists a $c>0$ such that for any constant-speed geodesic (with respect to $h$) path $\gamma(t)$, $t\in[0, 1]$, $\gamma(t)\in O$,
\begin{equation}\label{E8}
         \psi(\gamma(t))\geq (1-t)\psi(\gamma(0))+t\psi(\gamma(1))+c\frac{t(1-t)}{2}d^2_h(\psi(\gamma(0)),\psi(\gamma(1))).
\end{equation}

A function $\psi: M\rightarrow \mathbb{R}$ is said to be locally semiconcave if for each $p\in M$ there is a neighborhood $O$ of $p$ in $M$ such that (\ref{E8}) holds true as soon as $\gamma(t) \in O$ ($t\in [0, 1]$); or equivalently if (\ref{E8}) holds for some fixed positive number $c$ as long as $\gamma$ stays in a compact subset $K$ of $O$.

Similar definitions for semiconvexity and local semiconvexity are obtained in an obvious way by reversing the sign of the inequality in (\ref{E8}).
\end{definition}

To show $u^+(x)$ is locally semiconcave, we need the following lemma. Although the lemma is stated in $\mathbb{R}^n$, it is still valid for our case, since the property we want to study is local.

\begin{lemma}[\protect{\cite[Lemma 3.2]{AnderssonCQG1998}}]\label{lem9}
Let $U$ be an open convex domain in $\mathbb{R}^n$ and $f:U\rightarrow \mathbb{R}$ a continuous function. Assume for some constant $c$ and for all $p\in U$ that $f$ has a smooth upper support function $f_p$ at $p$, i.e. $f_p(x)\geq f(x)$ for all $x$ near $p$ with equality holding when $x=p$, such that $D^2f_p(x)\leq c I$ near $p$. Then $f-\frac{c}{2}\|x\|^2_E$ is concave in $U$, thus $f$ is semiconcave and twice differentiable almost everywhere in $U$. In this lemma, $\|\cdot\|_E$ denotes the Euclidean norm on $R^n$.
\end{lemma}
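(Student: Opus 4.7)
The plan is to reduce the statement to showing that $g(x) := f(x) - \tfrac{c}{2}\|x\|_E^2$ is concave on $U$; semiconcavity of $f$ and twice differentiability almost everywhere will then follow immediately, the latter from Alexandrov's theorem for concave functions. As a first step I would observe that $g$ inherits smooth upper support functions: setting $g_p(x) := f_p(x) - \tfrac{c}{2}\|x\|_E^2$ gives $g_p \geq g$ near $p$ with equality at $p$, and $D^2 g_p(x) = D^2 f_p(x) - cI \leq 0$ near $p$. So the task reduces to the following claim: a continuous function $g$ on an open convex subset of $\mathbb{R}^n$ with smooth concave upper support functions at every point is concave.

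By convexity of $U$ it suffices to prove concavity along every line segment in $U$, which in turn reduces the claim to its one-dimensional version: a continuous $h:[0,1]\to\mathbb{R}$ with smooth upper support functions $h_p$ satisfying $h_p''(t)\leq 0$ near each interior $p$ is concave. (Along a line of direction $v$ through $p$, the second derivative of $g_p$ is $v^\top D^2 g_p\, v \leq 0$, so the hypothesis is preserved by restriction.)

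The main obstacle will be this 1D step. A naive attempt: if $h$ fails to be concave, then some chord $\ell$ lies strictly above $h$ on a subinterval, so $h-\ell$ attains its minimum at an interior point $t^*$ with $h(t^*)-\ell(t^*)<0$; since $h_{t^*}-\ell \geq h-\ell$ with equality at $t^*$, the smooth function $h_{t^*}-\ell$ also has a local minimum at $t^*$, forcing $h_{t^*}''(t^*)\geq 0$. This is \emph{consistent} with $h_{t^*}''(t^*)\leq 0$ and yields no contradiction. The fix is a perturbation: replace $h$ by $h_\delta(t) := h(t) - \tfrac{\delta}{2}t^2$, whose upper support functions $h_{p,\delta} := h_p - \tfrac{\delta}{2}t^2$ are \emph{strictly} concave with $h_{p,\delta}''\leq -\delta$. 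The same minimum argument now gives $h_{t^*,\delta}''(t^*)\geq 0$ versus $h_{t^*,\delta}''(t^*)\leq -\delta<0$, a genuine contradiction. Hence each $h_\delta$ is concave, and letting $\delta\to 0^+$ (with $h_\delta\to h$ uniformly on $[0,1]$) preserves concavity, so $h$ is concave.

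With $g$ thus shown to be concave on $U$, I would conclude using standard convex analysis: $f = g + \tfrac{c}{2}\|x\|_E^2$ is then semiconcave, and Alexandrov's theorem yields a second-order Taylor expansion of $g$ (hence of $f$) at almost every point of $U$, completing the argument.
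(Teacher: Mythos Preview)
The paper does not supply its own proof of this lemma; it is quoted directly from \cite{AnderssonCQG1998} and used as a black box in the proof of Proposition~\ref{the1}. Your argument is correct and is essentially the standard one: reduce to concavity of $g=f-\tfrac{c}{2}\|x\|_E^2$ by absorbing the Hessian bound, restrict to line segments to make the problem one-dimensional, and then use the $\delta$-perturbation to upgrade ``$h_p''\le 0$'' to ``$h_{p,\delta}''\le -\delta$'' so that the interior-minimum comparison with the chord yields a strict contradiction. The final appeal to Alexandrov's theorem for the almost-everywhere twice differentiability is exactly what is needed. There is nothing to compare against in the present paper, but your proof matches the approach in the cited source.
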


\textit{Proof of Proposition \ref{the1}}. Without loss of generality, we can assume that $\bar{O}$ in Definition \ref{Def2} is a compact subset of $M$. For any $x\in O$, let $\gamma_{x, s}$ be a maximal geodesic between $x$ to $p_{1,s}$. Then by Lemma \ref{lem9} and the definition of $u^+_{p, s}(x)$, we only need to use comparison theory to give an estimation of the Hessian of the distance function $d(x, p_{1_s})$$(x\in O)$ in terms of upper and lower bounds of the timelike sectional curvatures of  2-planes containing $\dot{\gamma}_{x, s}(0)$ and the length of $\gamma_{x, s}$, where the Hessian of $d(x, p_{1,s})$ is defined in terms of the Levi-Civita connection with respect to $g$. Since $\gamma$ (see Definition \ref{Def2}) is a maximal segment between $x$ to $q_s$, the interior of $\gamma$ is free of cut points. By \cite[Proposition 3.2]{ERDGTG2003}, \cite[Propositions 9.7, 9.29]{Beem 1996}, we can choose $O$ such that the distance function $d(x, p_{1,s})$ is a smooth function with respect to $x\in O$.  When $p_{1,s}$ is close to $x$, there is a compact subset $K_1\subseteq TM$ that contains all the tangent vectors $\dot{\gamma}_{x, s}|_{ x\in O}$. This, together with Lemma \ref{lem24}, implies that both the timelike sectional curvatures of  2-planes containing $\dot{\gamma}_{x, s}(0)$ and the length of $\gamma_{x, s}$ are bounded from above. By the method in \cite[Theorem 5.10]{JinGD2018}, there exists a $c(O)>0$ depending only on $O$ such that $D^2d(x,p_{1,s})\geq -c(O)I$ for $x\in O$. Then, we can finally conclude that $u^+(x)$ is locally semiconcave since the constant $c(O)$ is independent on $s_n$. \qed

\subsection{ $u^+$ is global viscosity solution to equation \eqref{E1}}

In this subsection, we will prove the function $u^+$ defined in Definition \ref{Def1} is a viscosity solution of Lorentzian eikonal equation (\ref{E1}). The proof is slightly modified from the one in \cite[Proposition 2.1]{CuiJMP2014}.

\begin{lemma}\label{lem2}
 If $u^+$ is differentiable at $p$, then $g(\nabla u^+(p), \nabla u^+(p))=-1$ and $\nabla u^+(p)$ is a past-directed timelike vector.
\end{lemma}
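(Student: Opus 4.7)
The plan is to construct a smooth upper support function for $u^+$ at $p$ out of the support functions $u^+_{p, s_n}$ already exhibited in Definition~\ref{Def2} and Lemma~\ref{lem24}, then match its gradient with $\nabla u^+(p)$ via the differentiability assumption, and finally compute that gradient explicitly using the first variation formula for the Lorentzian distance.

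First I would apply Lemma~\ref{lem24} to extract a subsequence $s_n \to \infty$ along which the auxiliary points $p_{1, s_n}$ stay in a fixed compact subset of $M$; passing to a further subsequence, I may assume $p_{1, s_n} \to p_1 \in I^+(p)$. Shrinking the common neighborhood $O$ of $p$ from Lemma~\ref{lem24} if necessary, I may further arrange $O \subseteq I^-(p_1)$ and that $d(\cdot, p_1)$ is smooth on $\overline{O}$; this uses the standard fact that the interior of a maximal timelike geodesic segment contains no cut points (\cite[Propositions 9.7, 9.29]{Beem 1996}), exactly the selection already used in the proof of Proposition~\ref{the1}. Define
\[
u^+_p(x) := u^+(p) + d(p, p_1) - d(x, p_1), \qquad x \in O,
\]
which is smooth on $O$. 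To verify that $u^+_p$ is an upper support for $u^+$ at $p$, I would rewrite the original support functions by using the identity $u^+_{p, s_n}(p) = u^+_{\tau_{s_n}}(p)$ as
\[
u^+_{p, s_n}(x) = u^+_{\tau_{s_n}}(p) + d(p, p_{1, s_n}) - d(x, p_{1, s_n}).
\]
Each $u^+_{p, s_n}$ is an upper support for $u^+_{\tau_{s_n}}$ on $O$ by the reverse triangle inequality, so passing to the limit $n \to \infty$—using the local uniform convergence $u^+_{\tau_{s_n}} \to u^+$ from Proposition~\ref{the1} together with continuity of $d$—yields $u^+_p(x) \geq u^+(x)$ on $O$ with equality at $x = p$.

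Since $u^+$ is differentiable at $p$ and $u^+_p$ is smooth with $u^+_p \geq u^+$ and $u^+_p(p) = u^+(p)$, the function $u^+_p - u^+$ attains a local minimum of zero at $p$, which forces $\nabla u^+(p) = \nabla u^+_p(p)$. To compute the latter, let $\gamma : [0, d(p, p_1)] \to M$ be the unit-speed future-directed timelike maximizing geodesic from $p$ to $p_1$. The first variation formula for the Lorentzian length functional, applied to variations that fix $p_1$ and move only the initial endpoint, gives $\nabla_x d(x, p_1)\big|_{x = p} = \dot\gamma(0)$, a future-directed unit timelike vector. Hence
\[
\nabla u^+(p) = -\dot\gamma(0),
\]
which is past-directed, timelike, and satisfies $g(\nabla u^+(p), \nabla u^+(p)) = g(\dot\gamma(0), \dot\gamma(0)) = -1$.

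The main obstacle will be the bookkeeping in the first step: verifying that a single neighborhood $O$ can be chosen on which all the approximating support functions $u^+_{p, s_n}$ are simultaneously defined and smooth, and on which $u^+_{\tau_{s_n}} \to u^+$ converges uniformly. This hinges on Lemma~\ref{lem24} (to trap the $p_{1, s_n}$ in a compact set), on the regularity of the Lorentzian distance off the cut locus, and on the equi-Lipschitz estimates from Proposition~\ref{the1}; all the ingredients are already present in the paper, but they must be combined carefully so that the upper support inequality survives in the limit.
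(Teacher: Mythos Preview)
Your proof is correct and takes a genuinely different route from the paper's. The paper argues directly from the variational inequality: for any future-directed causal curve $\gamma$ with $\gamma(0)=p$, the reverse triangle inequality gives $u^+(\gamma(t))-u^+(p)\ge L(\gamma|_{[0,t]})$, and dividing by $t$ yields $g(\nabla u^+(p),W)\ge |W|$ for every future-directed causal $W$. From this single inequality the paper extracts both the past-directed timelike character of $\nabla u^+(p)$ (via an orthogonality argument and an auxiliary Lemma~\ref{lem4} ruling out the lightlike case) and the unit norm (plugging in $W=-\nabla u^+(p)$ for $|\nabla u^+(p)|\ge 1$, and a maximal-geodesic velocity together with reverse Cauchy--Schwarz for $|\nabla u^+(p)|\le 1$). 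Your approach instead identifies $\nabla u^+(p)$ concretely as $-\dot\gamma(0)$ by matching it with the gradient of a smooth upper barrier, which is more geometric and delivers all three conclusions in one stroke once the barrier is in hand; the trade-off is that you lean on the support-function machinery (Lemma~\ref{lem24}, the convergence in Proposition~\ref{the1}), whereas the paper's argument is self-contained from the definition of $u^+$ and does not invoke semiconcavity at all.

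One point to tighten: your appeal to ``the interior of a maximal timelike geodesic segment contains no cut points'' justifies smoothness of $d(\cdot,p_{1,s_n})$ near $p$ for each $n$ (since $p_{1,s_n}$ lies in the interior of the maximizer from $p$ to $\tau_{s_n}$), but it does not automatically survive the passage to the limit point $p_1$, which is an endpoint rather than an interior point of the segment from $p$. The cleanest fix is to observe that the parameter $T$ in Lemma~\ref{lem24} may be taken small, so that $p_1=\tilde\gamma_x(T)$ lands in a convex normal neighborhood of $p$ where $d(\cdot,p_1)$ is certainly smooth; alternatively, replace $p_1$ by any interior point $q$ of the maximal segment from $p$ to $p_1$ and note that $u^+(p)+d(p,q)-d(x,q)\ge u^+(p)+d(p,p_1)-d(x,p_1)\ge u^+(x)$ by one more use of the reverse triangle inequality, so the upper-support property is preserved.
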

\begin{proof}
  Assume that $u^+$ is differentiable at $p$. By the definition of $u^+$, there exists a subsequence $s_n$ of $s\rightarrow \infty$ such that
\[
u^+=\lim\limits_{s_n\rightarrow \infty}u^+_{\tau_{s_n}}.
\]
 Choosing any smooth future-directed causal curve $\gamma: [0, T) \rightarrow M (T>0)$ with $\gamma(0)=p$, we have
\begin{align*}
u^+(\gamma(t))-u^+(p)&=u^+(\gamma(t))-u^+(\gamma(0))\\
&=\lim\limits_{s_n\rightarrow \infty} u^+_{\tau_{s_n}}(\gamma(t))-\lim\limits_{s_n\rightarrow \infty} u^+_{\tau_{s_n}}(\gamma(0))\\
&=\lim\limits_{s_n\rightarrow \infty}(d(\gamma(0), \tau_{s_n})-d(\gamma(t), \tau_{s_n}))\\
&\geq d(\gamma(0), \gamma(t))\\
&\geq \int_0^t \sqrt{-g(\dot{\gamma}(l), \dot{\gamma}(l))} dl
\end{align*}
for every $t\in [0, T)$. Dividing by $t$ on both sides, we get
\begin{equation}\label{E2}
                 \frac{u^+(\gamma(t))-u^+(p)}{t}\geq \frac{1}{t}\int_0^t \sqrt{-g(\dot{\gamma}(l), \dot{\gamma}(l))} dl.
\end{equation}
Letting $t\rightarrow 0_+$, the differentiability of $u^+$ at $p$, together with inequality (\ref{E2}), impiles
\[
d u^+(p)(\dot{\gamma}(0))\geq \sqrt{-g(\dot{\gamma}(0), \dot{\gamma}(0))}.
\]
In other words
\begin{equation}\label{E3}
                 g(\nabla u^+(p), \dot{\gamma}(0))\geq \sqrt{-g(\dot{\gamma}(0), \dot{\gamma}(0))}
\end{equation}
for any future-directed causal vector $\dot{\gamma}(0)$. To continue the proof of Lemma \ref{lem2}, we need the following two claims.

\textit{Claim 1}. $\nabla u^+(p)$ is a past-directed timelike vector.

First of all, by (\ref{E3}) we have $\nabla u^+(p)\neq 0$.  Suppose $\nabla u^+(p)$ is spacelike, then there exists a smooth future-directed timelike curve $\bar{\gamma}: [0, \epsilon) \rightarrow M$ such that $\bar{\gamma}(0)=p$ and $g(\dot{\bar{\gamma}}(0), \nabla u^+(p))=0$, this means that $\nabla u^+(p)$ and $\dot{\bar{\gamma}}(0)$ are orthogonal with respect to the Lorentzian metric $g$. On the other hand, form (\ref{E3}), we get $g(\nabla u^+(p), \dot{\bar{\gamma}}(0))>0$, this contradicts the orthogonal hypothesis. Thus, $\nabla u^+(p)$ is a causal vector. Furthermore, $g(\nabla u^+(p), V)\geq 0$ for any future-directed causal vector $V\in T_p M$ implies $\nabla u^+(p)$ is past-directed. To show $\nabla u^+(p)$ is indeed a timelike vector, we need the following lemma

\begin{lemma}\label{lem4}
  Suppose $V \in T_p M$ is a nonzero past-directed non-spacelike vector and $g(V, W)\geq \sqrt{-g(W, W)}$ for any future-directed causal vector $W$, then $V$ is not lightlike.
\end{lemma}
The proof of Lemma \ref{lem4} is omitted and we refer to \cite[Lemma 2.5]{CuiJMP2014}.

\textit{Claim 2}. $|\nabla u^+(p)|=1$.

  First we choose a future-directed smooth causal curve $\delta: [0, \epsilon)\rightarrow M$ with $\delta(0)=p$, $\dot{\delta}(0)=-\nabla u^+(p)$. Then by inequality (\ref{E3}),
\begin{align*}
|\nabla u^+(p)|^2=-g(\nabla u^+(p), \nabla u^+(p))=g(\nabla u^+(p), \dot{\delta}(0))\geq\sqrt{-g(\dot{\delta}(0), \dot{\delta}(0))}=|\nabla u^+(p)|.
\end{align*}
Hence, either $|\nabla u^+(p)|\geq 1$ or $|\nabla u^+(p)|=0$. By \textit{Claim 1}, we obtain
\begin{equation}\label{E6}
  |\nabla u^+(p)|\geq 1            .
\end{equation}
By Remark \ref{remark1}, there exists a maximal geodesic segment between $p$ and $\tau_s$ denoted by $\gamma_{p,s}$, i.e., $\gamma_{p,s}$ is a future-directed timelike (unit-speed) geodesic $\gamma_{p,s}:[0, d(p, \tau_s)]\rightarrow M$ with $\gamma_{p,s}(0)=p$, such that $d(p, \tau_s)= L(\gamma_{p,s})$ and for $0\leq t_1 <t_2 \leq d(p, \tau_s)$,
\begin{align*}
d(\gamma_{p,s}(t_1), \gamma_{p,s}(t_2))=t_2 -t_1.
\end{align*}
Then we get
\begin{align*}
\frac{u^+(\gamma_{p,s}(t))-u^+(\gamma_{p,s}(0))}{t-0}=\frac{\lim\limits_{s_n\rightarrow\infty}\big[d(\gamma_{p,s_n}(0), \tau_{s_n})-d(\gamma_{p,s_n}(t), \tau_{s_n})\big]}{t-0}=1
\end{align*}
for every $t\in (0, d(p, \tau_s))$. Letting $t\rightarrow 0^+$, by the reverse Cauchy-Schwarz inequality for causal vectors (\cite[2.2.1]{SachsWuBAMS1977}) and the fact that $\gamma_{p,s}$ is a timelike (unit-speed) geodesic,  we have
\begin{equation}\label{E7}
  1=d u^+(p)(\dot{\gamma}_{p,s}(0))=g(\nabla u^+(p), \dot{\gamma}_{p,s}(0))\geq |\nabla u^+(p)| |\dot{\gamma}_{p,s}(0)| = |\nabla u^+(p)|       .
\end{equation}
 By inequalities (\ref{E6}) and (\ref{E7}), the proof of \textit{Claim 2} is complete.

So far, the proof of Lemma \ref{lem2} is finished. \qed
\end{proof}

   For a set $B$ in a vector space, the convex hull of $B$, $co B$, is the smallest convex set containing $B$. 

\begin{lemma}\label{lem6}
  If $\psi$ is a locally semiconvex (resp., semiconcave) function on manifold $M$, then it is locally Lipschitz (under any reasonable metric), and $\nabla^- \psi(p)$ (resp., $\nabla^+ \psi(p)$) is nonempty for any $p\in M$. In this case, $\nabla^- \psi(p) (resp., \nabla^+ \psi(p))=co\nabla^*\psi(p) \subset T_pM$.
\end{lemma}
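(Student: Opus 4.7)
The plan is to reduce everything to the classical Euclidean theory of concave functions, exploiting the fact that all the notions involved (local Lipschitzness, super/subgradient, limiting gradient, local semiconcavity) are purely local and chart-independent. Fix $p\in M$ and choose a coordinate chart $(U,\varphi)$ with $\varphi(p)=0$ and $\varphi(U)\subset\mathbb{R}^n$ convex. I would focus on the semiconcave case (the semiconvex one follows by sign reversal). By Definition \ref{Def5}, shrinking $U$ if needed, there is a $c>0$ such that $\tilde\psi(x):=\psi\circ\varphi^{-1}(x)-\tfrac{c}{2}\|x\|_E^2$ is concave on $\varphi(U)$; the smooth quadratic correction does not affect local Lipschitzness, convex hulls of limiting gradients, or membership in the superdifferential (up to a translation by $cx$), so it suffices to handle the concave function $\tilde\psi$ on a convex open subset of $\mathbb{R}^n$.

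For local Lipschitzness, I would use the classical fact that a concave function on a convex open set in $\mathbb{R}^n$ is locally Lipschitz, via monotonicity of difference quotients and a bound on the oscillation over a cube contained in a slightly larger cube. Pulling back through $\varphi$ and comparing with the auxiliary Riemannian metric $h$ gives local Lipschitzness of $\psi$ near $p$ with respect to $h$. Once local Lipschitzness is established, Rademacher's theorem implies $\tilde\psi$ is differentiable almost everywhere near $0$; picking any sequence $x_k\to 0$ of differentiability points, the local Lipschitz bound keeps $\{D\tilde\psi(x_k)\}$ in a compact set, so some subsequence converges and produces an element of $\nabla^*\psi(p)$. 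This shows $\nabla^*\psi(p)\neq\emptyset$, and a fortiori $\nabla^+\psi(p)\neq\emptyset$ once the equality with $\mathrm{co}\,\nabla^*\psi(p)$ is in hand.

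For the identity $\nabla^+\psi(p)=\mathrm{co}\,\nabla^*\psi(p)$ I would invoke the classical characterization of the Fr\'echet superdifferential of a concave (equivalently, semiconcave) function, exactly as developed in the Cannarsa--Sinestrari reference the paper already cites for Definition \ref{Def3}. The inclusion $\mathrm{co}\,\nabla^*\psi(p)\subseteq\nabla^+\psi(p)$ is the easy direction: any $V\in\nabla^*\psi(p)$ is a supergradient, because passing to the limit in the concavity inequality $\tilde\psi(y)\leq\tilde\psi(x_k)+\langle D\tilde\psi(x_k),y-x_k\rangle$ provides a $C^1$ upper support function at $p$ with prescribed differential; and $\nabla^+\psi(p)$ is itself convex, since averaging $C^1$ upper support functions produces another $C^1$ upper support function (using concavity of $\tilde\psi$ to check that the average still dominates $\tilde\psi$). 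The reverse inclusion is the substantive one. Finally, the musical isomorphism $T_pM\to T_p^{\ast}M$, $V\mapsto g(V,\cdot)$, induced by the Lorentzian metric is a linear bijection that intertwines the ``Lorentzian gradient'' $\nabla\phi$ used in Definition \ref{Def3} with the covector $d\phi$ appearing in the Euclidean picture; since this isomorphism preserves limits and convex hulls, the Euclidean identity transfers verbatim to $T_pM$.

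The main obstacle is the nontrivial inclusion $\nabla^+\psi(p)\subseteq\mathrm{co}\,\nabla^*\psi(p)$. For concave (hence semiconcave) functions this is a standard density-of-reachable-gradients statement: any supergradient can be written as a convex combination of limits of gradients at nearby differentiability points. The argument is not hard but is not a one-liner either; it combines a separation/Hahn--Banach argument (to show $\nabla^+\psi(p)$ is contained in any closed convex set that separates from the gradient cone of support functions) with Carath\'eodory's theorem to realize any extremal supergradient as a limit of differentials. I would simply cite Proposition 3.3.4 / Theorem 3.3.6 of Cannarsa--Sinestrari, which is the exact statement needed and fits the existing referencing conventions of the paper.
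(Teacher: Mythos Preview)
Your proposal is correct and aligns with the paper's own treatment: the paper does not give an independent proof of this lemma but simply refers to \cite[Theorem 3.3.6]{CannarsaPNDE2004}, which is exactly the Cannarsa--Sinestrari result you invoke after your chart reduction. Your additional discussion of why the statement transfers through coordinate charts and the Lorentzian musical isomorphism is a helpful elaboration, but the core approach is the same.
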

 For a proof of Lemma \ref{lem6}, we refer to \cite[Theorem 3.3.6]{CannarsaPNDE2004}, where the limiting gradient is called reachable gradient.

\begin{lemma}\label{lem7}
  Let $(M, g)$ be a globally hyperbolic space-time, then the function $u^+$ defined in Definition \ref{Def1} is a viscosity solution to Lorentzian eikonal equation (\ref{E1}) .
\end{lemma}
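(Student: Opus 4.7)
The plan is to combine three ingredients already in place: the local semiconcavity of $u^+$ from Proposition \ref{the1}, the pointwise description of $\nabla u^+$ at differentiability points from Lemma \ref{lem2}, and the structural formula $\nabla^{\pm}u^+(p) = \mathrm{co}\,\nabla^{*} u^+(p)$ from Lemma \ref{lem6} (applied with the appropriate sign). Since $u^+$ is semiconcave, points of differentiability form a dense set, and by Lemma \ref{lem2} every limiting gradient $W\in\nabla^{*}u^+(p)$ is a limit $W=\lim_{n\to\infty}\nabla u^+(p_n)$ of past-directed timelike vectors with $g(\nabla u^+(p_n),\nabla u^+(p_n))=-1$; continuity of $g$ and of the time-orienting vector field $X$ then give $g(W,W)=-1$ and, since $W$ is in particular causal with $g(W,W)<0$, $W$ is past-directed timelike.

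For the supersolution inequality, I would invoke the standard consequence of semiconcavity that $\nabla^- u^+(p)\neq\emptyset$ forces $u^+$ to be differentiable at $p$ with $\nabla^- u^+(p)=\{\nabla u^+(p)\}$ (this is \cite[Chap.~3]{CannarsaPNDE2004}). Lemma \ref{lem2} then yields $g(V,V)=-1\geq -1$ for every $V\in \nabla^- u^+(p)$, which is the required inequality.

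For the subsolution inequality, write an arbitrary $V\in \nabla^+ u^+(p)$ as a finite convex combination $V=\sum_{i=1}^k \lambda_i W_i$ with $\lambda_i\geq 0$, $\sum_i \lambda_i=1$ and $W_i\in \nabla^* u^+(p)$, using Lemma \ref{lem6}. By the paragraph above, each $W_i$ is past-directed timelike with $|W_i|=1$. Since all the $W_i$ share the same (past) time orientation, the reverse Cauchy-Schwarz inequality for Lorentzian causal vectors (\cite[2.2.1]{SachsWuBAMS1977}) gives $g(W_i,W_j)\leq -|W_i||W_j|=-1$ for every $i,j$, so
\[
g(V,V)=\sum_{i,j}\lambda_i\lambda_j\, g(W_i,W_j)\leq -\sum_{i,j}\lambda_i\lambda_j = -1,
\]
which is exactly the subsolution inequality.

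I expect the main obstacle to be a careful bookkeeping for the subsolution step: I need that every limiting gradient is simultaneously past-directed (so that the reverse Cauchy-Schwarz inequality, which fails for causal vectors of opposite time orientation, is actually available for all the $W_i$ that appear in a single convex combination). This is what Lemma \ref{lem2} delivers uniformly across differentiability points, so the argument goes through; the supersolution side is essentially automatic once semiconcavity is invoked.
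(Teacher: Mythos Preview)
Your proof is correct and follows essentially the same approach as the paper: both split into sub- and supersolution parts, use semiconcavity to reduce the supersolution inequality to Lemma~\ref{lem2} at differentiability points, and use Lemma~\ref{lem6} to handle the subsolution inequality via the structure of $\nabla^{*}u^+(p)$. The only cosmetic difference is that where you expand $V=\sum_i\lambda_iW_i$ and apply the reverse Cauchy--Schwarz inequality termwise, the paper simply invokes the convexity of the set $\{W\in T_pM:\ W\text{ past-directed timelike},\ g(W,W)\le -1\}$; your computation is precisely the standard proof of that convexity, so the two arguments coincide.
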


\begin{proof}
By the definition of viscosity solution, we need to show $u^+$ is subsolution and supersolution of equation (\ref{E1}). Firstly, for any $V\in \nabla^+ u^+(p)$,
by Lemma \ref{lem6},
\begin{align*}
V\in \nabla^+ u^+(p)=co \nabla^*u^+(p).
\end{align*}
By Lemma \ref{lem2} and the convexity of the set $\{W\in T_p M | ~W ~\text{is past-directed timelike vector and}~ g(W, W)\leq -1 \}$, we can finally conclude that
\[
g(V,V)\leq -1.
\]
It means that $u^+$ is a subsolution of the equation (\ref{E1}).

On the other hand, since $u^+$ is locally semiconcave, $u^+$ is differentiable at $p\in M$ whenever $\nabla^- u^+(p)\neq \emptyset$. Thus, Lemma \ref{lem2} implies that $u^+$ is a supersolution.

Therefore $u^+$ is indeed a viscosity solution of the equation (\ref{E1}) and this completes the proof of Theorem \ref{the10}.\qed
\end{proof}

\begin{remark} 	
Using the same method, we can finally conclude that $u^-$ defined in Definition \ref{Def1} is also a global viscosity solution to Lorentzian eikonal equation (\ref{E1}).
%
\end{remark}

\section{The level sets of viscosity solution of Lorentzian eikonal equation}\label{sec:level sets}
In this section, for any $u\in \mathcal{S}(M)$ we shall study the level sets of $u$ when the time orientation of $u$ is consistent.  Firstly, we give an  equivalent characterization of time orientation of $u$. Secondly, we show that $u_s$ is a partial Cauchy surface for each $s\in \text{Image}(u)$. Finally, we give an example, which shows that in general the level set $u_s$ is not a Cauchy surface. Recall that the time orientation of $u$ is consistent means that for any $x, y\in M$, $V\in \nabla^* u(x)$ and $W\in \nabla^* u(y)$, $V$ and $W$ have the same time orientation.  Without loss of generality, we always assume that the time orientation of $u$ is past-directed and $s\in \text{Image}(u)$ in this section.

\subsection{Equivalent characterization of time orientation of viscosity solutions}

\begin{lemma}\label{lem11}
  For $u\in \mathcal{S}(M)$, the time orientation of $u$ at any point is past-directed if and only if $u$ is increasing monotonically along any inextendible future-directed causal curve.
\end{lemma}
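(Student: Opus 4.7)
My plan is to reduce both implications to the reverse Cauchy--Schwarz inequality for Lorentzian causal vectors \cite[2.2.1]{SachsWuBAMS1977}: if $V$ is past-directed timelike and $W$ is future-directed causal, then
\[
g(V,W)\;\geq\;|V|\,|W|\;\geq\;0,
\]
with strict inequality unless $W=0$. In our setting, at any point $p$ where $u$ is differentiable, the viscosity property forces $g(\nabla u(p),\nabla u(p))=-1$ (apply Definition \ref{Def4} with $V=\nabla u(p)$ viewed as both a sub- and supergradient), so $\nabla u(p)$ is a unit timelike vector; hence every $V\in\nabla^*u(p)$ is a unit timelike vector by passing to the limit in $g(\nabla u(p_k),\nabla u(p_k))=-1$.

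\textbf{Forward direction.} Assume every $V\in\nabla^*u(x)$ is past-directed timelike. Let $\gamma:I\to M$ be an inextendible future-directed causal curve and fix $t_1<t_2$ in $I$. Since $u$ is locally Lipschitz, I would invoke Clarke's mean-value inequality for Lipschitz functions along a Lipschitz curve, bounding $u(\gamma(t_2))-u(\gamma(t_1))$ from below by $\int_{t_1}^{t_2}\min\{g(V,\dot\gamma(t))\,:\,V\in\partial u(\gamma(t))\}\,dt$, where $\partial u(p)=\operatorname{co}\nabla^*u(p)$ is Clarke's generalized gradient (this coincides with $\nabla^\pm u(p)$ via Lemma \ref{lem6} when those sets are nonempty). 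Any element of $\operatorname{co}\nabla^*u(\gamma(t))$ is a convex combination of past-directed unit timelike vectors and therefore is itself past-directed timelike (the past chronological cone is open and convex). The reverse Cauchy--Schwarz inequality above then forces the integrand to be nonnegative, so $u(\gamma(t_2))\geq u(\gamma(t_1))$.

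\textbf{Reverse direction.} Suppose $u$ is monotonically increasing along every future-directed causal curve. Fix $x\in M$ and let $V\in\nabla^*u(x)$ arise as $V=\lim_k\nabla u(p_k)$ with $p_k\to x$ and $u$ differentiable at $p_k$. By the remark above, each $\nabla u(p_k)$ is a unit timelike vector; I only need to pin down its time orientation. Given any future-directed causal $W\in T_{p_k}M$, choose a smooth future-directed causal curve $\sigma:[0,\varepsilon)\to M$ with $\sigma(0)=p_k$ and $\dot\sigma(0)=W$; monotonicity of $u\circ\sigma$ gives $g(\nabla u(p_k),W)=(u\circ\sigma)'(0)\geq 0$. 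A unit timelike vector whose inner product is nonnegative against every future-directed causal vector must be past-directed, because a future-directed timelike vector has strictly negative inner product with itself. Passing to the limit $p_k\to x$ inside the closed set of past-directed non-spacelike vectors, $V$ is past-directed, and it is timelike because $g(V,V)=-1$.

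\textbf{Main obstacle.} The delicate point is the forward implication: $u$ may fail to be differentiable at any point of the measure-zero curve $\gamma$, so the integration step cannot be carried out by a naive chain rule. Clarke's subdifferential calculus, combined with the identification $\partial u(p)=\operatorname{co}\nabla^*u(p)$ supplied by Lemma \ref{lem6} and the convexity of the past chronological cone, is exactly what resolves this; the rest is bookkeeping with the reverse Cauchy--Schwarz inequality.
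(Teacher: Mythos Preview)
Your argument is essentially correct and takes a genuinely different route from the paper in both directions.

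\textbf{Forward direction.} The paper first treats the special case where $u$ happens to be differentiable almost everywhere \emph{along} $\gamma$ and integrates $g(\nabla u,\dot\gamma)$ directly; for a general causal curve it then approximates by nearby causal curves with this property and passes to the limit. Your use of the Clarke generalized gradient and the chain rule for $u\circ\gamma$ bypasses this approximation entirely, which is cleaner. Two points to tighten. First, your parenthetical appeal to Lemma~\ref{lem6} is circular: that lemma is stated for semiconcave functions, and semiconcavity of $u$ is only established later (Theorem~\ref{the7}), using the present lemma. You do not need Lemma~\ref{lem6}; the identification $\partial u(p)=\operatorname{co}\nabla^*u(p)$ is standard Clarke theory for locally Lipschitz functions and should be cited as such. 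Second, as written you only obtain $u(\gamma(t_2))\geq u(\gamma(t_1))$, whereas the paper proves \emph{strict} monotonicity and relies on it immediately afterwards (Lemma~\ref{lem16} uses $u(x_2)>u(x_1)$ to conclude acausality of level sets). The fix is easy: each $V\in\operatorname{co}\nabla^*u(\gamma(t))$ lies in the convex set $\{W:g(W,W)\leq -1,\ W\ \text{past-directed}\}$, so $g(V,\dot\gamma(t))>0$ strictly; upper semicontinuity of the Clarke gradient and compactness of $\gamma([t_1,t_2])$ then give a uniform positive lower bound on the integrand.

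\textbf{Reverse direction.} The paper argues indirectly: it first shows a local consistency claim at differentiable points, and then, given a future-directed $V\in\nabla^*u(x)$, constructs nearby a future-directed causal curve along which $u$ decreases. Your argument is more direct and avoids that detour: you test monotonicity at the approximating differentiable points $p_k$ against an arbitrary future-directed causal vector $W$, conclude $g(\nabla u(p_k),W)\geq 0$, and deduce that $\nabla u(p_k)$ is past-directed by taking $W=\nabla u(p_k)$. This is correct; just note explicitly that the local curve $\sigma$ can be extended to an inextendible causal curve so that the hypothesis applies.
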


\begin{proof}
Assume that the time orientation of $u$ at any point is past-directed and let $\zeta$ be an inextendible future-directed causal curve. We firstly consider the case that $u$ is differentiable almost everywhere on $\zeta$. For any $a<b\in \text{dom}(\zeta)$, we have
\begin{align}\label{E45}
u(\zeta(b))-u(\zeta(a))&=\int_a^b d_{\zeta(s)}u(\dot{\zeta}(s))ds   \nonumber\\
                     &= \int_a^b g(\nabla u, \dot{\zeta})|_{\zeta(s)} ds.
\end{align}
Since $u$ is locally Lipshcitz with respect to $h$, for any $x\in M$, there exists a compact neighborhood $U_x$ such that
\[
\inf\limits_{z\in U_x, V\in \partial\mathcal{C}_z^{-1}}\left\{\left|\frac{\nabla u(z)}{|\nabla u(z)|_h}-V \right|_h\right\}>0,
\]
where $\partial\mathcal{C}_z^{-1}:=\{ ~V\in T_z M|~ V ~\text{is a past-directed lightlike vector and}~ |V|_h=1\}$. Thus, we have
\[
\inf\limits_{z\in U_x, W\in \mathcal{C}_z}g(\nabla u(z), W)>0,
\]
where $\mathcal{C}_z: = \{ ~V\in T_z M|~ V ~\text{is a future-directed causal vector and}~ |V|_h=1\}$. Without loss of generality, we assume that $\zeta(a)$ and $\zeta(b)$ are both in $U_x$. We parametrize $\zeta|_{[a, b]}$ with respect to $h$-arc length and use $\tilde{\zeta}:[\tilde{a}, \tilde{b}]\rightarrow M$ to denote this reparametrized curve. By equality \eqref{E45}, we have
\begin{align}\label{E46}
u(\zeta(b))-u(\zeta(a))&=\int_{\tilde{a}}^{\tilde{b}} g(\nabla u, \dot{\tilde{\zeta}})|_{\tilde{\zeta}(s)} ds \nonumber\\
                     &\geq d_h(\zeta(a), \zeta(b))\inf\limits_{z\in U_x, W\in \mathcal{C}_z}g(\nabla u(z), W).
\end{align}

In the following, we will study the case that $u$ is not differentiable almost everywhere on $\zeta$. For any $a<b\in \text{dom}(\zeta)$, we choose an open subset $U\subseteq M$ such that $\zeta|_{[a,b]} \subseteq U$. Since the time orientation of $u$ is past-directed, we can find a sequence of future-directed causal curves $\zeta_i$ satisfying
\begin{itemize}
  \item $\zeta_i\subseteq U$ for each i.
  \item $\zeta_i\rightarrow \zeta$ in the $C^0$-topology.
  \item $u$ is differentiable almost everywhere on $\zeta_i$.
\end{itemize}
We parametrize these curves $\zeta_i|_{[a_i, b_i]}$ with respect to $h$-arc length and use $\tilde{\zeta}_i:[\tilde{a_i}, \tilde{b_i}]\rightarrow M$ to denote these curves. Using the same argument above,
\begin{align*}
u(\zeta(b))-u(\zeta(a))&=\lim\limits_{i\rightarrow\infty}\int_{\tilde{a}_i}^{\tilde{b}_i} d_{\tilde{\zeta}_i(s)}u(\dot{\tilde{\zeta}}_i(s))ds\\
                     &\geq \lim\limits_{i\rightarrow\infty}d_h(\tilde{\zeta}(\tilde{a}_i), \tilde{\zeta}(\tilde{b}_i))\inf\limits_{z\in U_x, W\in \mathcal{C}_z}g(\nabla u(z), W)\\
                     &= d_h(\zeta(a), \zeta(b))\inf\limits_{z\in U_x, W\in \mathcal{C}_z}g(\nabla u(z), W) \\
                     &>0.
\end{align*}
By the arbitrariness of $a$, $b$ and $\zeta$, $u$ is increasing monotonically along any inextendible future-directed causal curve.
To prove the other direction of the lemma, we propose the following claim.

\textit{Claim:} Let $x$ be a differentiable point of $u$, then there exists a neighborhood $U$ of $x$ such that for any differentiable point $y\in U$, the time orientation of $u$ at $y$ is consistent with the time orientation of $u$ at $x$.

\textit{Proof of  Claim.}  Actually, if the claim is not true, we can get a sequence $\{x_i\}$ with $x_i\rightarrow x$ and the time orientation of $u$ at $x_i$ is different from the one at $x$. Then there exists a timelike vector in $\nabla^* u(x)$, which is opposite to the time orientation of $u$ at $x$. This contradicts the hypothesis that $x$ is a differentiable point of $u$. Thus the \textit{Claim} is proved. 


If there exists a future-directed timelike vector $V\in \nabla^* u(x)$, by the definition of $\nabla^*$, we can get a sequence of differentiable points $x_i$ such that $\nabla u(x_i)$ is future-directed timelike for each $x_i$. By the \textit{Claim} and the analysis above, we can easily find a future-directed causal curve $\gamma$ in a neighborhood of $x_i$ such that $u$ is decreasing monotonically on $\gamma$. One can extend $\gamma$ to an inextendible causal curve. It is a contradiction to the fact that $u$ is increasing monotonically along any inextendible future-directed causal curve. Up to now, the proof of Lemma \ref{lem11} is complete. \qed
\end{proof}

\subsection{The level sets of $u$ are partial Cauchy surfaces}\label{subsection4.2}

\begin{figure}\label{p2}
\begin{center}
\begin{tikzpicture} [scale=1.2]
\draw[->] (-3,0) -- (3,0) node[right]{$x$};
\draw[->] (0,-3) -- (0,3) node[right]{$y$};
\filldraw (0,0) circle (0.5pt) node[anchor=north east] {$O$};
\filldraw (-2,0) circle (1pt) node[anchor=north east] {$\scriptstyle (-1,0)$};
\filldraw (2,0) circle (1pt) node[anchor=north west] {$\scriptstyle (1,0)$};
\filldraw [blue](0,-2) circle (1pt) node[anchor=north west] {$\scriptstyle (0,-1)$};
\filldraw [blue](0,2) circle (1pt) node[anchor=south west] {$\scriptstyle (0,1)$};
\draw (2,0) .. controls (1,1) .. (0,2);
\draw (-2,0) .. controls (-1,1) .. (0,2);
\draw (-2,0) .. controls (-1,-1) .. (0,-2);
\draw (0,-2) .. controls (1,-1) .. (2,0);
\draw [blue] (0,-2) .. controls (0.5,0)  .. (0,2) ;
\draw [blue](0.5,0.25) node[above] {$\Gamma$};
\draw [red, thick](1.2,-0.8) .. controls (1.2,0) .. (1.2,0.8);
\draw [red] (1.2,0.2) node[right] {$u_a$};
\filldraw [red](1.2,0) circle (1pt) node[anchor=north east] {$\scriptstyle (a,0)$};
\end{tikzpicture}
\caption{A space-time admits a viscosity solution $u(x,y)=x$. Obviously, any Cauchy surface $\Gamma$ (the blue curve) must connect $(0, -1)$ and $(0, 1)$. The level set $u_a$ (the red segment) is not a Cauchy surface except $a=0$.}
\end{center}
\end{figure}
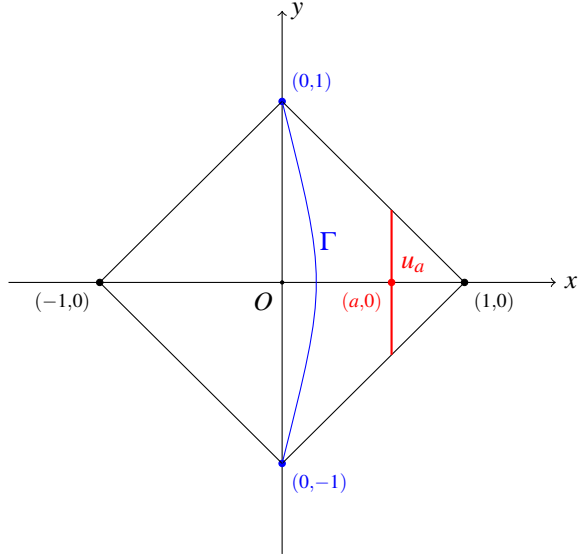

\begin{lemma}\label{lem16}
 Fix $s\in \text{Image}(u)$, $u_s$ is a closed acausal set.
\end{lemma}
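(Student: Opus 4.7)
The plan is to handle the two assertions of the lemma separately, using continuity of $u$ and the strict monotonicity along causal curves established in Lemma \ref{lem11}.

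Closedness is immediate: since $u \in \mathcal{S}(M) \subseteq Lip_{loc}(M)$ is continuous and the singleton $\{s\}\subset \mathbb{R}$ is closed, $u_s = u^{-1}(\{s\})$ is a closed subset of $M$.

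For acausality, I would argue by contradiction. Suppose there exist distinct points $x, y \in u_s$ with $x \leq y$. By the definition of the causal relation, there is a future-directed causal curve $\gamma$ from $x$ to $y$, and since $x \neq y$ this curve is non-constant. Extend $\gamma$ to a past- and future-inextendible future-directed causal curve $\zeta$. Since, by the standing assumption of this section, the time orientation of $u$ is past-directed at every point, Lemma \ref{lem11} asserts that $u$ is monotonically increasing along $\zeta$. Restricted to the subarc from $x$ to $y$, the quantitative estimate inside the proof of Lemma \ref{lem11} gives
\[
u(y) - u(x) \;\geq\; d_h(x, y) \cdot \inf_{z \in U,\, W \in \mathcal{C}_z} g(\nabla u(z), W),
\]
on a compact neighborhood $U$ containing this subarc, and the infimum on the right is strictly positive (this is precisely where one uses that $\nabla u$, when it exists, is past-directed timelike with $|\nabla u|=1$, hence stays uniformly away from the future lightcone). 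Since $x \neq y$ implies $d_h(x,y) > 0$, this yields the strict inequality $u(x) < u(y)$, contradicting $u(x) = u(y) = s$.

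The only subtlety I anticipate is making sure that the conclusion of Lemma \ref{lem11} is read as \emph{strict} monotonic increase between distinct points, not merely weak monotonicity; this is built into the lemma's proof via the positive lower bound above, so no additional machinery is needed. The extension of $\gamma$ to an inextendible curve is always possible on a globally hyperbolic space-time, so this step is also routine.
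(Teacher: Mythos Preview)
Your proof is correct and follows essentially the same route as the paper: closedness by continuity of $u$, and acausality by contradiction via the strict monotonicity of $u$ along future-directed causal curves provided by Lemma~\ref{lem11}. The paper's argument is slightly terser (it does not spell out the extension to an inextendible curve or the quantitative lower bound), but the substance is identical.
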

\begin{proof}
Indeed, $u_s$ is a closed subset by the continuity of $u$. We only need to show that $u_s$ is an acausal set. For $x_1, x_2 \in u_s$, if there exists a future-directed causal curve $\theta$ connecting $x_1$ and $x_2$, without loss of generality, we can assume that $\theta(0)=x_1$, $\theta(1)=x_2$ and $u$ is differentiable almost everywhere on $\theta$. By Lemma \ref{lem11}, we have
\begin{align*}
u(x_2)>u(x_1)
\end{align*}
This contradicts the fact $x_1, x_2 \in u_s$.\qed
\end{proof}

\begin{lemma}\label{lem17}
 For each $s\in \text{Image}(u)$, $\text{edge}(u_s)=\emptyset$ .
\end{lemma}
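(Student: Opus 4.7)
The plan is to show that every point of $u_s$ (which equals $\bar{u_s}$ since $u_s$ is closed by Lemma \ref{lem16}) fails the edge condition. Fix $p \in u_s$ and let $U$ be an arbitrary neighborhood of $p$. I want to verify that every future-directed timelike curve $\gamma\colon[0,1]\to U$ with $\gamma(0)\in I^-(p,U)$ and $\gamma(1)\in I^+(p,U)$ must intersect $u_s$; this is exactly the negation of the edge condition at $p$.

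First I would use Lemma \ref{lem11} to control the values of $u$ at the endpoints of $\gamma$. Writing $q=\gamma(0)$ and $r=\gamma(1)$, the relations $q\ll p$ and $p\ll r$ (in $U$, hence in $M$) mean there exist future-directed timelike curves in $U$ from $q$ to $p$ and from $p$ to $r$. Inspecting the proof of Lemma \ref{lem11}, the key estimate reads
\[
u(\zeta(b))-u(\zeta(a))\;\geq\;d_h(\zeta(a),\zeta(b))\,\inf_{z\in U_x,\,W\in\mathcal{C}_z}g(\nabla u(z),W),
\]
with the infimum strictly positive on any relatively compact neighborhood. Consequently $u$ is \emph{strictly} increasing along any non-constant future-directed causal curve, which gives $u(q)<u(p)=s$ and $u(r)>u(p)=s$.

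Next I would apply the intermediate value theorem to the continuous map $t\mapsto u(\gamma(t))$ on $[0,1]$. Since $u(\gamma(0))<s<u(\gamma(1))$, there is some $t^\ast\in(0,1)$ with $u(\gamma(t^\ast))=s$, i.e.\ $\gamma(t^\ast)\in u_s$. Thus no timelike curve in $U$ from $I^-(p,U)$ to $I^+(p,U)$ avoids $u_s$, so $p\notin\mathrm{edge}(u_s)$. Since $p\in u_s=\bar{u_s}$ was arbitrary, $\mathrm{edge}(u_s)=\emptyset$.

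The only delicate point is ensuring the \emph{strict} monotonicity of $u$ along the timelike connectors realizing $q\ll p\ll r$; I expect this to be the main thing to emphasize, but it is immediate once one notes that the proof of Lemma \ref{lem11} produces a quantitative positive lower bound rather than merely non-decreasingness. Everything else—closedness of $u_s$, the endpoint inequalities, and the intermediate value step—is a direct consequence of continuity.
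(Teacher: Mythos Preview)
Your proposal is correct and follows essentially the same argument as the paper: both use the strict monotonicity of $u$ along future-directed causal curves (from Lemma~\ref{lem11}) to force $u(\gamma(0))<s<u(\gamma(1))$ and then apply the intermediate value theorem to produce a point of $u_s$ on the curve. The only cosmetic difference is that the paper phrases it as a proof by contradiction (assuming $p\in\mathrm{edge}(u_s)$ and deriving a contradiction), whereas you verify the negation of the edge condition directly.
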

\begin{proof}
On the contrary, for any arbitrarily fixed $s\in \text{Image}(u)$ and any $p\in \text{edge}(u_s)$, by the definition of $\text{edge}(u_s)$, there exists a future-directed timelike curve $\beta:[a,b]\rightarrow M$ with $\beta(a)\ll p \ll \beta(b)$ and $\beta(t)\cap u_s=\emptyset$ for any $t\in [a,b]$. By Lemma \ref{lem11}, $u$ is strictly monotonically increasing along any future-directed causal curve. Thus we must have $u(\beta(0))<u(p)<u(\beta(t))$, this inequality  together with the continuity of $u$ implies that there exists a $t_0\in [a,b]$ such that $u(\beta(t_0))=u(p)$. This is impossible, since $\beta$ is chosen such that $\beta(t)\cap u_s=\emptyset$ for any $t\in[a, b]$. \qed
\end{proof}

Up to now, by Definition \ref{Def10}, $u_s$ is a partial Cauchy surface for each $s\in \text{Image}(u)$. To conclude this section we provide an example, which shows that in general the level sets $u_s$ are not Cauchy surfaces.

\textit{Example}.
In the 2-dimensional Minkowski space-time $(\mathbb{R}^2, dy^2-dx^2)$, let $M=I^+((-1,0))\cap I^-((1,0))$ be the induced space-time. Clearly, $M$ is globally hyperbolic. It is easy to see that $u(x,y)=x$ is a globally defined viscosity solution of equation (\ref{E1}) on $M$, but level sets $u_s$ are only partial Cauchy surfaces except $u_0$. See Figure 1.


\section{A variational representation of $u$}\label{sec:uuu}

In this section, we will give a variational representation of $u$ when the time orientation of $u$ at any point is past-directed. First of all, we propose a variational problem and prove the existence of minimizers. In the second place, we prove equality \eqref{25} and the existence of forward calibrated curve of $u$. In the end, we show that $u$ is a locally semiconcave function. These results in company complete the proof of the Theorem \ref{the7}. Our proof is motivated by the weak KAM theory for positively definite Lagrangian systems \cite{FathiWKAM}. To go into the details, we first fix a notation. In this section, $\Omega_{x}^t$ denotes the set of all the past-directed piecewisely smooth timelike curve $\gamma: [0, t]\rightarrow M$  with $\gamma(t)=x$.

\subsection{The existence of minimizer to the variational problem}
Define
\begin{equation}\label{E26}
\tilde{u}(t,x):=\inf\limits_{\gamma\in \Omega_{x}^t}\left[u(\gamma(0))-\int_0^t\sqrt{-g(\dot{\gamma}(s), \dot{\gamma}(s))}ds\right].
\end{equation}
Firstly, we shall prove the existence of minimizers in this variational problem.
\begin{lemma}\label{lem18}
For any $x\in M$, there exists a $t_0>0$ depending on $x$ and a past-directed timelike curve $\gamma_x\in \Omega_{x}^t$ such that
\begin{equation}\label{E27}
\tilde{u}(t,\gamma_x(t))=u(\gamma_x(0))-\int_0^t\sqrt{-g(\dot{\gamma}_x(s), \dot{\gamma}_x(s))}ds
\end{equation}
for any $t\leq t_0$.
\end{lemma}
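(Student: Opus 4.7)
The plan is to apply the direct method of the calculus of variations.

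First I would establish the lower bound $\tilde u(t,x)\geq u(x)>-\infty$. Given any $\gamma\in\Omega_x^t$, reverse it to the future-directed timelike curve $\tilde\gamma(s):=\gamma(t-s)$. At any differentiability point $p$ of $u$, the relations $\nabla u(p)\in\nabla^+ u(p)\cap\nabla^- u(p)$ force $g(\nabla u(p),\nabla u(p))=-1$, and past-directed time orientation makes $\nabla u(p)$ past-directed timelike. The reverse Cauchy--Schwarz inequality for oppositely time-oriented timelike vectors then gives $g(\nabla u,\dot{\tilde\gamma})\geq|\dot{\tilde\gamma}|$ wherever it applies, and by the approximation argument used in the proof of Lemma \ref{lem11} one concludes $u(\gamma(0))-u(x)\geq L(\gamma)$. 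This shows $\tilde u(t,x)\geq u(x)$, and moreover that for any minimizing sequence $\{\gamma_n\}\subset\Omega_x^t$ one has $L(\gamma_n)=u(\gamma_n(0))-u(x)+o(1)$.

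Second I would extract a minimizer via the Limit Curve Lemma. Write $y_n:=\gamma_n(0)\in I^+(x)$ for a minimizing sequence. Combining the steepness estimate $L(\gamma_n)\leq\tau(y_n)-\tau(x)$ from \eqref{E24}, the local Lipschitz bound $u(y_n)-u(x)\leq K d_h(y_n,x)$ on a compact neighborhood of $x$, and the identity $L(\gamma_n)=u(y_n)-u(x)+o(1)$, I would choose $t_0>0$ small enough to trap $\{y_n\}$ eventually in the compact causal diamond $J^+(x)\cap J^-(\tau_{\tau(x)+s_0})$, whose compactness follows from Lemma \ref{lem25} applied to the Cauchy surface $\tau_{\tau(x)+s_0}$. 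Reparametrizing each $\gamma_n$ by $h$-arc length confines the curves themselves to a fixed compact subset of $M$, so \cite[Lemma 14.2]{Beem 1996} produces a past-directed causal limit $\gamma_x\colon[0,t]\to M$ with $\gamma_x(t)=x$. Upper semicontinuity of $L(\cdot)$ under $C^0$-convergence of causal curves yields $L(\gamma_x)\geq\limsup L(\gamma_n)$, and continuity of $u$ gives $u(y_n)\to u(\gamma_x(0))$, whence
\[
u(\gamma_x(0))-L(\gamma_x)\ \leq\ \lim_{n\to\infty}\bigl[u(y_n)-L(\gamma_n)\bigr]\ =\ \tilde u(t,x),
\]
which combined with the defining infimum forces equality \eqref{E27}.

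The main obstacle I expect is twofold. First, choosing $t_0$ small enough that minimizing sequences are trapped in a fixed compact causal diamond is essential: this is where the steep temporal function $\tau$ of \cite{SuhrCMP2018} and Lemma \ref{lem25} are used in a nontrivial way, because the $h$-diameter of the curves is a priori uncontrolled. Second, one must rule out the degenerate limit $\gamma_x\equiv x$. To do this I would use that, by local Lipschitzness and Rademacher's theorem on charts, $u$ has differentiability points arbitrarily close to $x$; at such a point, the past-directed unit Lorentzian gradient together with the exponential map produces a short future-directed timelike geodesic whose reversal is a genuine past-directed timelike competitor in $\Omega_x^t$ of strictly positive Lorentzian length, preventing $\gamma_x$ from collapsing and making the limit a bona fide element of $\Omega_x^t$.
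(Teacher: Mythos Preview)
Your overall strategy (direct method: lower bound, compactness, upper semicontinuity of $L$) matches the paper's spirit, but the compactness step has a genuine gap. The parameter $t$ in $\Omega_x^t$ is purely a domain length: a past-directed timelike curve $\gamma:[0,t]\to M$ with $\gamma(t)=x$ can have its startpoint $y_n=\gamma(0)$ arbitrarily far into $I^+(x)$, regardless of how small $t$ is. Your three estimates do not combine to bound $\tau(y_n)$ from above: the steepness inequality $L(\gamma_n)\leq\tau(y_n)-\tau(x)$ is a \emph{lower} bound on $\tau(y_n)$, and the local Lipschitz bound $u(y_n)-u(x)\leq K\,d_h(y_n,x)$ presupposes that $y_n$ already lies in a fixed compact neighborhood. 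Thus ``choose $t_0$ small enough'' does not, as written, trap the minimizing sequence in any compact diamond.

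The paper closes this gap by a reduction you omit: since $L$ is reparametrization-invariant, one may restrict to unit-speed maximal geodesics, whereupon $\gamma\in\Omega_x^{t_0}$ forces $d(x,\gamma(0))=t_0$ and \eqref{E26} collapses to $\tilde u(t_0,x)=\inf\{u(y)-t_0:\ x\leq y,\ d(x,y)=t_0\}$. Compactness is then obtained not from a $\tau$-level set but from a level set $u_s$ of $u$ itself (a partial Cauchy surface by Section~\ref{sec:level sets}): one picks $s$ with $x\in D^-(u_s)\setminus u_s$, sets $t_0=\max_{z\in J^+_{u_s}(x)}\tfrac12 d(x,z)$, and invokes Lemma~\ref{lem25} together with \cite[Lemma~14.43]{Oneill1983} to make $J^+(x)\cap D^-(u_s)$ compact; the minimizer is then found inside the closed set $\{y:d(x,y)=t_0\}\cap J^-[u_s]$. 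This unit-speed reduction also dissolves your second obstacle automatically: once $d(x,\gamma(0))=t_0>0$ is enforced, the degenerate limit $\gamma_x\equiv x$ is excluded without any appeal to Rademacher or the exponential map.
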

\begin{proof}
For any $x\in M$ and $\gamma\in \Omega_x^t$, $L(\gamma)\leq u(\gamma(0))-u(x)$, and consequently $\tilde{u}(t,x)\geq u(x)$. Since the maximal existence time of ODE is lower semi-continuous with respect to initial conditions, we can find a uniformly $t>0$ such that any future-directed casual curve $\gamma$ with $\gamma(0)=x$ is well-defined on $[0, t]$. Moreover, by Lemma \ref{lem11} and inquality \eqref{E46}, there exists a positive constant $\delta$ which depends only on $d_h(\gamma(0), \gamma(t))$ and the local Lipschitz constant of $u$ such that $u(\gamma(t))-u(\gamma(0))\geq \delta>0$. Therefore, there exists an $s\in \text{Image}(u)$ such that $x\in D^-(u_s)\setminus u_s$. For any $z\in J^+_{u_s}(x)$, we set $t_{x, z}=\frac{1}{2}d(x, z)$ and $t_0=\max\limits_{z\in J^+_{u_s}(x)}t_{x, z}$.  Recall that by Lemmas \ref{lem16} \ref{lem17}, for each $s\in \text{Image}(u)$, $u_s$ is an acausal set and $\text{edge}(u_s)=\emptyset$. Then by \cite[Corollary 14.26]{Oneill1983}, $u_s$ is a closed topological hypersurface. Thanks to \cite[Lemma 14.43]{Oneill1983}, we know that $D(u_s)$ is open. By Remark \ref{remark1}, \cite[Proposition 14.31]{Oneill1983} and \cite[Theorem 14.38]{Oneill1983} we obtain that $J^+_{u_s}(x)$ is a compact, connected subset of $M$. Then, for the set $\{p\in I^+(x)| d(x,p)=t_0\}$, there exists a point $y\in u_s$ such that $d(x, y)= t_0$. Since $D(u_s)$ is open, Lemma \ref{lem25} guarantees that for any $x\in D^-(u_s)\setminus u_s$, the set $J^+(x)\cap D^-(u_s)$ is compact. Since the Lorentzian length functional $L(\gamma)$ does not rely on parameterization of $\gamma$, for the minimizer $\gamma$ of  \eqref{E26}, we can always assume that $\gamma$ is a unit speed maximal geodesic, then it is easy to check that
\begin{align}\label{E39}
\tilde{u}(t_0, x)=\inf\limits_{x\leq y, d(x,y)=t_0 }u(y)-t_0.
\end{align}
Then for any $t\leq t_0$, the  minimizer of  \eqref{E39} must be contained in $\{y| d(x,y)= t_0\}\cap J^-[u_s]$, which is a closed subset of $J^+(x)\cap D^-(u_s)$. Because of the upper semi-contimuity of $L(\cdot)$ and the existence of maximal geodesic between any two points $x, y$ with $x\leq y$ in $M$, we can finally find $\gamma_x$ satisfies equation (\ref{E27}).  \qed
\end{proof}

In the rest of this section, for any $x\in M$, we still use $\gamma_x^t$ to denote one of the minimizers in equality \eqref{E26}.


\subsection{ Variational representation and forward calibrated curve }\label{subs5.2}
 In this subsection, we shall show that the viscosity solution $u$ admits a local variational representation. Such variational representations can help us obtain an inextendible forward calibrated curve of $u$ at any $x\in M$. Based on Lemma \ref{lem18}, we have the following lemma.
\begin{lemma}\label{lem22}
 For any $x\in M$, there exists a $t_0>0$ depending on $x$ such that
\begin{align}\label{E40}
u(x)=\tilde{u}(t,x)=\inf\limits_{x\leq y, d(x,y)=t}u(y)-t
\end{align}
for any $0<t\leq t_0$.
\end{lemma}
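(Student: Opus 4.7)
The lemma asserts two equalities: the variational formula $\tilde u(t, x) = \inf_{x \leq y, d(x, y) = t} u(y) - t$, and the Lax--Oleinik fixed-point identity $u(x) = \tilde u(t, x)$. The first is essentially contained in the proof of Lemma \ref{lem18}: any minimizer of $\tilde u(t, x)$ may be reparametrized as a unit-speed maximal past-directed timelike geodesic, so $L(\gamma) = t = d(x, \gamma(0))$, whence minimization over curves in $\Omega_x^t$ collapses to minimization over endpoints $y$ with $d(x, y) = t$. Extending the compactness argument (using that $J^+(x) \cap D^-(u_s)$ is compact) gives the formula for all $t \in (0, t_0]$. I therefore focus on $u(x) = \tilde u(t, x)$.

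The lower bound $\tilde u(t, x) \geq u(x)$ is the inequality $L(\gamma) \leq u(\gamma(0)) - u(x)$ already used in the proof of Lemma \ref{lem18}. Reversing $\gamma$ yields a future-directed timelike curve $\tilde\gamma$ from $x$ to $y := \gamma(0)$. At any differentiability point of $u$ the eikonal identity $g(\nabla u, \nabla u) = -1$ holds (a differentiable Lipschitz function admits quadratic $C^1$ upper and lower support functions with gradient $\nabla u$, so $\nabla u \in \nabla^+ u \cap \nabla^- u$, and the viscosity property then pins down the norm), and $\nabla u$ is past-directed timelike by the time-orientation hypothesis. The reverse Cauchy--Schwarz inequality for oppositely oriented causal vectors then gives $g(\nabla u, \dot{\tilde\gamma}) \geq |\nabla u|_g \cdot |\dot{\tilde\gamma}|_g = |\dot{\tilde\gamma}|_g$. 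A Fubini-type perturbation of $\tilde\gamma$ in a local chart so that $u$ is a.e.\ differentiable along the perturbed curves (as in the proof of Lemma \ref{lem11}), integration, and passing to the limit yield $u(y) - u(x) \geq L(\tilde\gamma) = L(\gamma)$.

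The upper bound $\tilde u(t, x) \leq u(x)$ is the subtler direction. I plan to exhibit a specific past-directed competitor in $\Omega_x^t$ by constructing a future-directed unit-speed calibrated geodesic $\beta$ issuing from $x$. Pick a sequence $x_n \to x$ of differentiability points of $u$, set $V_n := -\nabla u(x_n)$ (a future-directed timelike unit vector by the above), and consider $\beta_n(s) := \exp_{x_n}(s V_n)$. For $s \in [0, T]$ with $T > 0$ uniform in $n$ (using continuity of the exponential together with Lemma \ref{lem10} and compactness), $\beta_n$ is unit-speed and maximal. The central claim is $u(\beta_n(s)) - u(x_n) \leq s$; combined with the lower bound applied to $\beta_n$, this forces equality, so after extracting a $C^1$-convergent subsequence $\beta_n \to \beta$, continuity of $u$ gives $u(\beta(s)) - u(x) = s$ for $s \in [0, T]$. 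For any $t \in (0, T]$, the past-directed reversal $\gamma(\sigma) := \beta(t - \sigma)$ then lies in $\Omega_x^t$ and satisfies $u(\gamma(0)) - L(\gamma) = u(\beta(t)) - t = u(x)$, which is $\tilde u(t, x) \leq u(x)$.

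The main obstacle is the claim $u(\beta_n(s)) - u(x_n) \leq s$, which captures the fact that $\beta_n$ is the characteristic of the eikonal equation emanating from $x_n$ in the gradient direction, along which $u$ cannot grow faster than the unit Lorentzian rate. Since $u$ is only Lipschitz, direct differentiation is unavailable; my intended route is to combine the viscosity supersolution inequality $g(V, V) \geq -1$ for $V \in \nabla^- u$ with an adaptation of the method of characteristics to the Lorentzian setting: on a uniform interval $[0, T]$ on which $\beta_n$ is free of conjugate and cut points, one shows inductively that $u$ remains differentiable along $\beta_n$ with $\nabla u(\beta_n(s)) = -\dot\beta_n(s)$, whence $\tfrac{d}{ds}(u \circ \beta_n) \equiv 1$. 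This is the Lorentzian analogue of the weak KAM statement that a viscosity solution is a fixed point of the Lax--Oleinik semigroup, and constitutes the core of the argument.
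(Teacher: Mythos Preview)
Your approach diverges from the paper's, and the route you propose for the upper bound contains a genuine gap.

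The paper's argument for $u(x)=\tilde u(t,x)$ is nearly a one-liner once one observes that the variational problem \eqref{E26} is reparametrization-invariant: membership in $\Omega_x^t$ only requires that the curve be past-directed timelike on the interval $[0,t]$ with endpoint $x$, and imposes no relation between the parameter length $t$ and the Lorentzian length $L(\gamma)$. Hence for $0<t_1<t_2\le t_0$ one linearly rescales a minimizer $\gamma_x^{t_1}\in\Omega_x^{t_1}$ to obtain a competitor in $\Omega_x^{t_2}$ with the same initial point and the same Lorentzian length, giving $\tilde u(t_2,x)\le\tilde u(t_1,x)$; the symmetric rescaling gives the reverse inequality. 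Thus $\tilde u(t,x)$ is constant on $(0,t_0]$, and letting $t\to0^+$ (equivalently, testing against arbitrarily short competitors, for which $u(\gamma(0))-L(\gamma)\to u(x)$ by continuity) yields $\tilde u(t,x)=u(x)$. No calibrated curve is constructed at this stage; the minimizer from Lemma~\ref{lem18} is only \emph{a posteriori} identified as calibrated in the discussion following Lemma~\ref{lem22}.

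Your plan instead tries to produce a calibrated ray from $x$ directly, by flowing along $\beta_n(s)=\exp_{x_n}(-s\nabla u(x_n))$ and passing to a limit. The obstacle you isolate, namely $u(\beta_n(s))\le u(x_n)+s$, is real, and the resolution you sketch does not work at this point in the argument. The assertion that $u$ remains differentiable along $\beta_n$ with $\nabla u(\beta_n(s))=-\dot\beta_n(s)$ is exactly the content of Proposition~\ref{prop3}, whose proof in the paper relies on the forward calibrated curves built from Lemma~\ref{lem22} and on the local semiconcavity of $u$ established in Theorem~\ref{the7}; invoking it here is circular. An open--closed propagation-of-differentiability argument along a characteristic does not go through for a merely Lipschitz viscosity solution: openness would require that differentiability at $\beta_n(s_0)$ forces differentiability at nearby points of the curve, and nothing short of semiconcavity delivers that. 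Moreover, the supersolution inequality $g(V,V)\ge -1$ for $V\in\nabla^-u$ constrains \emph{lower} test functions, whereas the bound you need is an \emph{upper} bound on $u$ along $\beta_n$; the natural smooth lower support $p\mapsto u(x_n)+d(x_n,p)$ only reproduces the inequality $u(\beta_n(s))\ge u(x_n)+s$ you already have. The reparametrization trick bypasses all of this.
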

\begin{proof}
By Lemma \ref{lem18}, there exists a $t_0>0$ depending on $x$ such that the second equality holds for any $t\in (0, t_0]$. Then, we only need to show that $u(x)=\tilde{u}(t,x)$ for any $t\in [0, t_0]$. For $0<t_1<t_2<t_0$, by Lemma \ref{lem18}, we have two past-directed timelike curves $\gamma_x^{t_1}:[0, t_1]\rightarrow M$ and $\gamma_x^{t_2}:[0, t_2]\rightarrow M$. Moreover, $x=\gamma_x^{t_1}(t_1)=\gamma_x^{t_2}(t_2)$. We define a past-directed timelike curve $\gamma_x^{t_1,t_2}:[0, t_2]\rightarrow M$ with $\gamma_x^{t_1,t_2}(s)=\gamma_x^{t_1}(\frac{t_1}{t_2}s)$ for any $s\in [0, t_2]$. Since the Lorentzian length functional $L(\gamma)$ does not rely on parameterization of $\gamma$, it can be concluded that
\begin{align*}
\tilde{u}(t_2,x)-\tilde{u}(t_1,x)\leq u(\gamma_x^{t_1, t_2}(0))-L(\gamma_x^{t_1, t_2})-u(\gamma_x^{t_1}(0))+L(\gamma_x^{t_1})=0.
\end{align*}
Similarly, it can be inferred that
\begin{align*}
\tilde{u}(t_2,x)-\tilde{u}(t_1,x)\geq 0.
\end{align*}
Thus, $\tilde{u}(t,x)$ is a constant with respect to $t$. Letting $t\rightarrow 0$, we can finally conclude that
\[
\tilde{u}(t,x)=u(x)
\]
\qed
\end{proof}

Up to now, we have shown that $u(x)=\inf\limits_{x\leq y, d(x,y)=t }u(y)-t$ for any $t\in [0, t_0]$. Without loss of generality, in the rest of this paper , we always assume that $\gamma_x^{t_0}$ is a unit speed maximal geodesic segment. For any $s\in [0, t_0]$, we define
\[
\breve{\gamma}^{t_0}_x(s)=\gamma^{t_0}_x(t_0-s).
\]
Then we have a future-directed unit speed timelike curve $\breve{\gamma}^{t_0}_x: [0 , t_0]\rightarrow M$ satisfying
\begin{align}\label{E41}
u(\breve{\gamma}_x^t(s_2))-u(\breve{\gamma}_x^t(s_1))=s_2-s_1,
\end{align}
for any $0\leq s_1 \leq s_2\leq t_0$. For $\breve{\gamma}_x(t_0)$, using Lemma \ref{lem22} again, we can extend $\breve{\gamma}_x^{t_0}:[0, t_0]\rightarrow M$ to be a future-directed timelike curve (denoted by $\breve{\gamma}_x^{t_0+t_1}$) $\breve{\gamma}_x^{t_0+t_1}:[0, t_0+t_1]\rightarrow M$. Obviously, $\breve{\gamma}_x^{t_0+t_1}$ satisfies equality \eqref{E41} for any $0\leq s_1 \leq s_2\leq t_0+t_1$. By the reverse triangle inequality of $d$, we have
\begin{align}\label{E50}
t_0+t_1\leq d(x, \breve{\gamma}_x^{t_0+t_1}(t_0+t_1)).
\end{align}
Inductively, one can easily get a future-directed inextendible timelike curve, denoted by $\gamma_x$. Actually, if $\gamma_x$ has an endpoint $p\in M$, by the reverse triangle inequality and \eqref{E50}, 
\[
\sum \limits_{i} t_i \leq d(x, p).
\]
For the point $p$, by Lemma \ref{lem22}, there exists a $t_p>0$, such that $u(p)=\inf\limits_{x\leq y, d(x,y)=t}u(y)-t$ for any $0<t\leq t_p$. Then $\gamma_x$ can be extended again.

In the rest of this paper, we always use $\gamma_x$ and $T$ to denote the future-directed inextendible timelike curve of $u$ at $x$ and the maximal existence time of $\gamma_x$.
Since $\gamma_x$ satisfies equality \eqref{E41} for any $0<s_1<s_2<T$, $\gamma_x$ is an inextendible forward calibrated curve of $u$ at $x$. In the following, we shall show that $\gamma_x$ is a ray. Fix arbitrarily $t_1, t_2\in[0, T)$, for any timelike curve $\eta$  connecting $\gamma_x(t_1)$ and $\gamma_x(t_2)$, we firsly assume that $u$ is differentiable almost everywhere on $\eta$. Then by the reverse Cauchy-Schwarz inequality, we have
\[
L(\gamma_x)|_{(t_1,  t_2)}=u(\gamma_x(t_2))-u(\gamma_x(t_1))\geq\int_{t_1}^{t_2} g(\nabla u(\eta), \dot{\eta}) ds \geq L(\eta).
\]
If $u$ is not differentiable almost everywhere on $\eta$, using the method in Lemma \ref{lem11}, we still have the above inquality.
This means that the inextendible timelike curve $\gamma_x$ is maximal on each segment. In a word, $\gamma_x$ is an inextendible forward calibrated ray.

\subsection{Local semiconcavity of $u$ }
In this subsection, we will show that $u$ is locally semiconcave when the time orientation of $u$ is consistent. Actually, with the help of variational representation of $u$, one can directly obtain the semiconcavity of $u$ in a local chart. The following proposition is the main result of this subsection.

\begin{proposition}\label{prop4}
Let $u\in \mathcal{S}(M)$, suppose that the time orientation of $u$ at any point is past-directed, then for any $x\in M$, there exists an $s\in \text{Image}(u)$ and a neighborhood $U$ of $x$ such that up to a constant,
\[
u(x)=-d(x, u_s)
\]
for any $x\in U$.
\end{proposition}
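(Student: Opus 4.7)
The plan is to realize $u$ locally as $s - d(\cdot, u_s)$ for an appropriately chosen level $s \in \text{Image}(u)$, combining the subsolution inequality (which yields $d(\cdot, u_s) \leq s - u(\cdot)$) with the inextendible forward calibrated ray construction of Section \ref{subs5.2} (which yields the reverse inequality).

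First I would select $s$ and a neighborhood $U$ of $x_0$ with $U \subseteq D^-(u_s) \setminus u_s$. Mimicking the argument from the proof of Lemma \ref{lem18}, by lower semi-continuity of ODE existence times together with uniform control of the local Lipschitz constant of $u$ on a compact neighborhood of $x_0$, I obtain constants $t > 0$ and $\delta > 0$ such that every future-directed causal curve from any $x$ in a small neighborhood $V$ of $x_0$ extends to parameter time at least $t$ and satisfies $u(\gamma(t)) - u(x) \geq \delta$; this uses the uniform positivity of $\inf g(\nabla u(z), W)$ as $z$ ranges over the compact neighborhood and $W$ over unit future-directed causal vectors at $z$, together with the elementary lower bound $d_h(\gamma(0), \gamma(t)) \geq t/2$ valid for small $t$ in $h$-arc-length parametrization. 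Set $s := u(x_0) + \delta/2$. Any future-inextendible future-directed causal curve from $x \in V$ with $u(x) < s$ then satisfies $u(\gamma(t)) \geq u(x) + \delta > s$, so by continuity of $u$ and the intermediate value theorem $\gamma$ crosses $u_s$; hence $x \in D^-(u_s)$. Shrinking $V$ to a smaller neighborhood $U$ on which $u < s$ gives $U \subseteq D^-(u_s) \setminus u_s$, and $s \in \text{Image}(u)$ because $u$ is continuous on the connected manifold $M$ and attains values exceeding $s$ along the forward calibrated ray from $x_0$ guaranteed by Lemma \ref{lem22}.

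Next, for each $x \in U$, the upper bound $d(x, u_s) \leq s - u(x)$ follows from the viscosity solution property $|\nabla u| = 1$ almost everywhere combined with the reverse Cauchy-Schwarz inequality for causal vectors: along any future-directed causal curve $\beta$ from $x$ to any $y \in J^+(x) \cap u_s$, one obtains $u(y) - u(x) \geq L(\beta)$, with the passage from almost-everywhere differentiability to general causal curves handled exactly as in the proof of Lemma \ref{lem11}. For the lower bound $d(x, u_s) \geq s - u(x)$, I would invoke the inextendible forward calibrated ray $\gamma_x : [0, T) \rightarrow M$ constructed in Section \ref{subs5.2}: along $\gamma_x$ one has $u(\gamma_x(r)) = u(x) + r$ and $\gamma_x$ is a unit-speed timelike geodesic that is maximal on each compact subinterval. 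Since $x \in D^-(u_s)$, this future-inextendible ray must intersect $u_s$, necessarily at the parameter $r^* = s - u(x)$, producing an explicit causal curve from $x$ to $u_s$ of length exactly $s - u(x)$. Combining the two bounds yields $d(x, u_s) = s - u(x)$, i.e., $u(x) = s - d(x, u_s)$ for every $x \in U$, which is the desired representation with constant $s$.

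The main obstacle will be the uniformity in the first step: the constants $t$, $\delta$, and $c = \inf g(\nabla u, \cdot)$ appearing in the proof of Lemma \ref{lem18} are stated pointwise, and one needs to verify carefully that they can be chosen on a common compact neighborhood of $x_0$. This reduces to combining the lower semi-continuity of the ODE existence time with the uniform positivity of $g(\nabla u(z), W)$ on a compact neighborhood (which in turn follows from $\nabla u$ being past-directed timelike with $|\nabla u| = 1$ almost everywhere and the local Lipschitz/semiconcavity properties of $u$). Once these uniform bounds are secured, the remaining steps reduce to direct applications of the viscosity solution property and the calibrated ray construction already developed in Sections \ref{sec:level sets} and \ref{sec:uuu}.
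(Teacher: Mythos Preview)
Your proof is correct and the core argument---bounding $d(x,u_s)$ above by $s-u(x)$ via the subsolution inequality and reverse Cauchy--Schwarz, and below via the forward calibrated ray---is exactly what the paper does, packaged there as Lemma~\ref{lem13}. The only substantive difference is in how you obtain the neighborhood $U\subseteq D^-(u_s)$. You argue uniformity of the constants $t,\delta$ from the proof of Lemma~\ref{lem18} over a compact neighborhood and flag this as the main obstacle; the paper instead observes that once $x_0\in D^-(u_s)$ is known (pointwise, from Lemmas~\ref{lem18} and~\ref{lem22}), the \emph{openness} of $D(u_s)$---established earlier in the proof of Lemma~\ref{lem18} via \cite[Lemma~14.43]{Oneill1983}, using that $u_s$ is closed, acausal and edgeless---immediately yields a neighborhood $U\subseteq D^-(u_s)$ on which Lemma~\ref{lem13} applies with the same $s$. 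This sidesteps your uniformity discussion entirely. Incidentally, the intermediate bound $d_h(\gamma(0),\gamma(t))\geq t/2$ you invoke is both unjustified as stated and unnecessary: in $h$-arc-length parametrization the integral $\int_0^t g(\nabla u,\dot\gamma)\,ds$ is already bounded below by $t\cdot\inf_{z,W} g(\nabla u(z),W)$ directly, without passing through $d_h$.
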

Proposition \ref{prop4} can be seen as a space-time counterpart of the results obtained in \cite{CuiIJM2016}, where the author showed that any viscosity solution is somehow a kind of distance like function in the Riemannian case.  Distance like functions, introduced by Gromov, play an important role in studying the geometry and topology of some kind of non-compact manifold. For more details, readers can refer \cite{CuiIJM2016, GromovHG}. The proof of Proposition \ref{prop4} is based on the following lemmas.
\begin{lemma}\label{lem21}
For any unit vector $V\in \nabla^* u(x)$, there exists a constant $c>0$ and a unique future-directed timelike geodesic $\gamma_{x,V}:[0, c]\rightarrow M$, satisfies
\[
\gamma_{x,V}(0)=x, ~~~u(\gamma_{x,V}(t))=u(\gamma_{x,V}(0))+t ~~\mbox{for every}~~t\in [0, c]
\]
and
\[
\dot{\gamma} _{x,V}(0)=-V.
\]
\end{lemma}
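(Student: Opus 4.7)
The plan is to realize $\gamma_{x,V}$ as a limit of forward calibrated rays issuing from a sequence of differentiable points approaching $x$. First, I would observe that the hypothesis of the section (time orientation of $u$ past-directed) together with the eikonal equation implies every element of $\nabla^* u(x)$ is a past-directed unit timelike vector, so $V$ is past-directed and $-V$ is future-directed unit timelike; in particular $-V$ is a legitimate initial velocity for a future-directed timelike geodesic. Then, by the definition of $\nabla^* u(x)$ recalled right after Definition \ref{Def6}, I choose a sequence $x_i\to x$ at which $u$ is differentiable and $\nabla u(x_i)\to V$.

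Next, for each $i$ I invoke the construction of subsection \ref{subs5.2}: Lemma \ref{lem22} applied at $x_i$ produces a unit-speed future-directed maximal (hence geodesic) timelike segment $\beta_i:[0,c_i]\to M$ starting at $x_i$ along which $u(\beta_i(t))=u(x_i)+t$. To pin down its initial velocity I differentiate the calibration identity at $t=0$ to get $g(\nabla u(x_i),\dot{\beta}_i(0))=1$, and combine this with $|\nabla u(x_i)|=|\dot{\beta}_i(0)|=1$ and the reverse Cauchy--Schwarz inequality for causal vectors (as used already in the proof of Lemma \ref{lem2}); equality forces $\dot{\beta}_i(0)=-\nabla u(x_i)$. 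So each $\beta_i$ is the unique geodesic with data $(x_i,-\nabla u(x_i))$.

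Now I pass to the limit. Since $(x_i,-\nabla u(x_i))\to(x,-V)$ in $TM$, standard continuous dependence of the geodesic flow on initial data yields a uniform $c>0$ such that, for all large $i$, the geodesic $\beta_i$ extends to $[0,c]$ and the family $\{\beta_i|_{[0,c]}\}$ converges in $C^1$ to the unique geodesic $\gamma_{x,V}:[0,c]\to M$ with $\gamma_{x,V}(0)=x$, $\dot{\gamma}_{x,V}(0)=-V$. Since $-V$ is future-directed timelike, $\gamma_{x,V}$ is a future-directed timelike geodesic after shrinking $c$ if necessary. Passing to the limit in $u(\beta_i(t))=u(x_i)+t$ and using continuity of $u$ gives $u(\gamma_{x,V}(t))=u(x)+t$ for all $t\in[0,c]$, which is the claimed calibration identity. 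Uniqueness of $\gamma_{x,V}$ is immediate from uniqueness of solutions of the geodesic ODE with prescribed initial position and velocity.

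The main obstacle is obtaining the uniform lower bound on the maximal existence times $c_i$: the $t_0$ produced by Lemma \ref{lem22} depends on the base point, so I need to know that this dependence is locally lower semicontinuous in a way compatible with the sequence $x_i\to x$. I would handle this by noting that the initial velocities $\dot{\beta}_i(0)=-\nabla u(x_i)$ lie in a compact subset of $TM$ (they are unit timelike vectors converging to $-V$), and then applying the standard lower semicontinuity of the exit time of a smooth flow from a compact neighbourhood of $x$, which gives a common $c>0$ on which every $\beta_i$ is defined for $i$ large; this uniform interval is what the limit argument in the previous paragraph requires.
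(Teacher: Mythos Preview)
Your proposal is correct and follows essentially the same route as the paper: choose differentiable points $x_i\to x$ with $\nabla u(x_i)\to V$, use the forward calibrated geodesics from subsection~\ref{subs5.2} at each $x_i$, identify $\dot\beta_i(0)=-\nabla u(x_i)$ via the reverse Cauchy--Schwarz equality case, and pass to the limit using continuous dependence and lower semicontinuity of the maximal existence time for the geodesic ODE. The paper additionally records that the limit curve is maximal (via upper semicontinuity of the length functional), but this is not needed for the statement as written and your argument already delivers everything the lemma claims.
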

\begin{proof}
For any $V\in\nabla^* u(x)$, by the definition of $\nabla^*u(x)$, there exists a sequence of events $x_i \in M$ with $x_i\rightarrow x$, such that $u$ is differentiable at $x_i$ and $\nabla u(x_i)\rightarrow V$. For each $x_i \in M$, by the last subsection, there exists a future inextendible unit speed timelike geodesic $\gamma_i: [0, t_i)\rightarrow M$, such that
\[
u(\gamma_{i}(t))-u(\gamma_{i}(0))=t
\]
for every $t\in (0, t_i)$. Diving by $t$ and letting $t\rightarrow 0$, we have
\begin{align*}
g(\nabla u(x_i), \dot{\gamma}_{x_i}(0))=1.
\end{align*}
On the other hand, by the reverse Cauchy-Schwarz inequality,
\[
g(\nabla u(x_i), \dot{\gamma}_{x_i}(0))\geq |\nabla u(x_i)||\dot{\gamma}_{x_i}(0)|=1
\]
Thus,
\[
\dot{\gamma}_i(0)=-\nabla u(x_i).
\]

For such a sequence of timelike geodesics $\gamma_i$, by the smooth dependence of solutions of ODE on the initial conditions, there exists a future-directed timelike geodesic $\gamma_\infty: [0, T)\rightarrow M$ with $\gamma_\infty(0)=x$, $\dot{\gamma}_\infty(0)=-V$, where $T$ is the maximal existence time of $\gamma_\infty$, such that $\gamma_i$ converge to $\gamma_\infty$ on any compact subset of $[0, T)$ with respect to $C^1$ topology. Moreover, since the maximal existence time of ODE is lower semi-continuous with respect to initial conditions, we can find a uniform $c>0$ such that $\{\gamma_i|_{[0,c]}\}_i$ and $\gamma_\infty|_{[0,c]}$ are
 well-defined.

 By the continuity of the Lorentzian distance function $d$, the maximality of $\{\gamma_i\}$ and the upper semicontinuity of the length functional with respect to the $C^0$-topology, we have, for each $t\in [0, c]$,
\begin{align*}
d(\gamma_\infty(0),\gamma_\infty(t))&= \lim\limits_{i\rightarrow\infty}d(\gamma_i(0), \gamma_i(t))\\
                      &=\lim\limits_{i\rightarrow\infty}L(\gamma_i|_{[0,t]})\\
                      &\leq L(\gamma_\infty)|_{[0,t]}\\
                      &\leq d(\gamma_\infty(0), \gamma_\infty(t))=d(x,\gamma_\infty(t)).
\end{align*}
Thus $\gamma_\infty$ is a maximal geodesic on $[0, t]$. By the continuity of $u$ and the convergence of $\gamma_i$,
\[
u(\gamma_\infty(t))=\lim\limits_{i\rightarrow\infty} u(\gamma_i(t))=\lim\limits_{i\rightarrow\infty} u(\gamma_i(0))+t=u(x)+t.
\]

Finally, define $\gamma_{x,V}:=\gamma_\infty$, then it is easily seen that $\gamma_{x,V}$ satisfied all the properties we need. So the proof of Lemma \ref{lem21} is complete.\qed
\end{proof}

\begin{lemma}\label{lem13}
Let $u\in \mathcal{S}(M)$ and $s\in \text{Image}(u)$, suppose that the time orientation of $u$ at any point is past-directed, then for any $x\in D^-(u_s)$,  we have $d(x, u_{s})= s- u(x)$.
\end{lemma}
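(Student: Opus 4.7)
The plan is to prove the lemma by establishing $d(x,u_s)\le s-u(x)$ and $d(x,u_s)\ge s-u(x)$ separately.

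For the upper bound, I would show that every future-directed causal curve $\gamma$ from $x$ to any $y\in u_s$ satisfies $L(\gamma)\le u(y)-u(x)=s-u(x)$, whence $d(x,u_s)\le s-u(x)$ by taking the supremum. At points where $u$ is differentiable, the eikonal equation gives $|\nabla u|=1$ and $\nabla u$ is past-directed timelike by the orientation hypothesis, so for a future-directed causal $\dot\gamma$ the reverse Cauchy--Schwarz inequality yields
\[
\tfrac{d}{dt}u(\gamma(t)) \;=\; g(\nabla u,\dot\gamma) \;\ge\; |\nabla u|\,\sqrt{-g(\dot\gamma,\dot\gamma)} \;=\; \sqrt{-g(\dot\gamma,\dot\gamma)},
\]
and integrating produces the required bound. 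When $u$ is not differentiable almost everywhere along $\gamma$, I would approximate $\gamma$ by future-directed causal curves on which $u$ is differentiable a.e., exactly as in the proof of Lemma \ref{lem11}, and pass to the limit using the upper semicontinuity of $L$ with respect to the $C^0$-topology.

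For the lower bound, I would invoke the forward calibrated inextendible timelike ray $\gamma_x:[0,T)\to M$ constructed in Section \ref{subs5.2}, which satisfies $u(\gamma_x(t))=u(x)+t$ for all $t\in[0,T)$ and is a unit-speed maximal geodesic on every compact subsegment. Since $x\in D^-(u_s)$ and $\gamma_x$ is a future-inextendible causal curve starting at $x$, the definition of the past Cauchy development forces $\gamma_x$ to meet $u_s$ at some time $t^*\in[0,T)$, and the calibration identity $u(\gamma_x(t^*))=u(x)+t^*=s$ gives $t^*=s-u(x)$. Maximality of $\gamma_x|_{[0,t^*]}$ then yields $d(x,u_s)\ge d(x,\gamma_x(t^*))=L(\gamma_x|_{[0,t^*]})=t^*=s-u(x)$, and combining with the upper bound completes the proof.

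The main obstacle I expect is the upper bound in the non-differentiable regime, where one cannot differentiate $u\circ\gamma$ directly; the approximation technique of Lemma \ref{lem11} should resolve it, but care is needed to ensure the approximating curves remain future-directed causal, terminate arbitrarily close to $u_s$, and preserve the length estimate in the limit. Once this is granted, the lower bound is essentially immediate from the existence of the calibrated ray and the defining property of $D^-(u_s)$; note also that uniqueness of the hit time $t^*$ is automatic from the acausality of $u_s$ established in Lemma \ref{lem16}, though only existence is needed for the estimate.
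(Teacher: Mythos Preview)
Your proposal is correct and follows the same two-inequality strategy as the paper: establish $d(x,u_s)\ge s-u(x)$ via a calibrated ray and $d(x,u_s)\le s-u(x)$ via the reverse Cauchy--Schwarz inequality along causal curves.

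There are two minor technical differences worth noting. For the upper bound, the paper argues by contradiction: assuming $s_1:=d(x,u_s)>s-u(x)$, it uses Fubini's theorem to select a single near-maximal causal curve $\gamma$ (with $L(\gamma)=s_2$ satisfying $s_1-s_2\le\tfrac12(s_1-s+u(x))$) on which $u$ is differentiable almost everywhere, then integrates $g(\nabla u,\dot\gamma)$ to obtain $s-u(x)\ge s_2>s-u(x)$. Your direct approach --- bounding $L(\gamma)$ for every $\gamma$ and approximating as in Lemma~\ref{lem11} when necessary --- achieves the same end without the Fubini step. For the lower bound, your appeal to the defining property of $D^-(u_s)$ (every future-inextendible causal curve from $x$ must meet $u_s$) to guarantee that the calibrated ray $\gamma_x$ hits $u_s$ is arguably cleaner than the paper's route, which instead invokes the construction of $t_0$ in Lemma~\ref{lem22} and the extensibility argument of Section~\ref{subs5.2} to ensure the calibrated segment can be extended until $u$ exceeds $s$. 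Both arguments are valid; yours makes more direct use of the hypothesis $x\in D^-(u_s)$.
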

\begin{proof}
For any $x\in D^-(u_s)$, we choose a reachable unit vector $V\in \nabla^* u(x)$. By Lemma \ref{lem21}, there exists a unique future-directed maximal geodesic segment $\beta: [0, c]\rightarrow M$ such that  $\beta(0)=x$, $\dot{\beta}(0)=V$ and $u(\beta(t))-u(x)=t$. By the construction of $t_0$ in Lemma \ref{lem22} and the extensibility of the calibrated curve in subsection \ref{subs5.2}, we can assume that $u(\beta(c))>s$. Thus, there exists a unique $t_1$ such that $u(\beta(t_1))=s$, $t_1=s- u(x)$ and
\[
d(x, \beta(t_1))=\mbox{length}(\beta|[0,t_1])=t_1=s-u(x).
\]
Since $\beta(t_1)\in u_{s}$, we have $d(x, u_{s})\geq s-u(x)$. If $s_1:= d(x, u_{s})> s-u(x)$, applying Fubuni's theorem, we can find an absolutely continuous curve (parameterized by arc-length) $\gamma: [0, s_2]\rightarrow M$ with $\gamma(0)=x\in u_{u(x)}$, $\gamma(s_2)\in u_{s}$ such that $0\leq s_1-s_2 \leq \frac{1}{2}(s_1-s+u(x))$ and $u$ is differentiable almost everywhere along $\gamma$ (with respect to the 1-dimensional Lebesgue measure). By the reverse Cauchy-Schwarz inequality for causal vectors, we have
\begin{align*}
s-u(x)&=u(\gamma(s_2))-u(\gamma(0))\\
       &=\int_0^{s_2} g(\nabla u, \dot{\gamma})|_{\gamma(t)}dt\\
       &\geq \mbox{length}(\gamma|_{[0,s_2]})\\
       &=s_2\\
       &\geq s_1-\frac{1}{2}(s_1-s+u(x))\\
       &> s-u(x).
\end{align*}
This contradiction proves $d(x, u_{s})= s-u(x)$. \qed
\end{proof}

\textit{Proof of Proposition \ref{prop4}.}
For any $x\in M$, by Lemmas \ref{lem18} \ref{lem22} , there exists an $s>0$, which is dependent on $x$, such that $x\in D^-(u_s)$. Then by Lemma \ref{lem13},
\begin{equation}\label{E35}
d(x, u_s)=s-u(x).
\end{equation}
Recall that for each $s\in \text{image}(u_s)$, $D(u_s)$ is an open subset of $M$. Then there exists a neighborhood $U$ of $x$ such that $U\subseteq D^-(u_s)$. By Lemma \ref{lem13}, for any $x\in U$, we have
\begin{equation}\label{E36}
u(x)=-d(x, u_s)+s.
\end{equation}

\textit{Proof of Theorem \ref{the7}.}
The first two conclusions of Theorem \ref{the7} can be obtained from Lemmas \ref{lem16}, \ref{lem17}, \ref{lem22}. For the last conclusion Theorem \ref{the7}, we can construct a support function of $u$ by Proposition \ref{prop4}.  Similar to $u^+(x)$ in Proposition \ref{the1}, the local semiconcavity of $u$ can be obtained by Lemma \ref{lem9} .

\begin{remark}\label{viscosity and smc}
Our results show that when the time orientation of viscosity solution is consistent, viscosity solution is locally semiconcave. Actually, it is not difficult to show that when the time orientation of a function $u\in Lip_{loc}(M)$ is consistent, $u\in \mathcal{S}(M)$ if and only if $u$ is a locally semiconcave function satisfying \eqref{E1} almost everywhere.
\end{remark}

\section{Weak KAM properties of $u$}\label{sec:3}

In this section, we will show that $u$ satisfies some particular properties, so-called weak KAM properties \cite{FathiWKAM} when the time orientation of $u$ is consistent. Similar properties for Busemann functions and regular cosmological time functions have been obtained on space-time in \cite{CuiJMP2014}. In this section, we still assume that the time orientation of $u$ at any point is past-directed.

Recall that if $u$ is differentiable at $x\in M$, then by Lemma \ref{lem21} and the discussions in subsection \ref{subs5.2}, there exists a unique forward calibrated ray $\gamma_x:[0, T)\rightarrow M$ satisfies
\[
\gamma_x(0)=x, u(\gamma_x(t))=u(\gamma_x(0))+t ~~~\mbox{for every}~~~ t\in [0,T)
\]
and
\[
\dot{\gamma_x}(0)=-\nabla u(x).
\]

\begin{proposition}\label{prop2}
If there is a unique future-directed timelike curve $\gamma_x:[0,T)\rightarrow M$, such that
\begin{equation}\label{E17}
  \gamma_x(0)=x, ~~ u(\gamma_x(t))=u(\gamma_x(0))+t  ~~~\mbox{for every}~~~ t\in [0,T),
\end{equation}
then $u$ is differentiable at $x$.
\end{proposition}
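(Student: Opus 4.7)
The plan is to argue by contrapositive: if $u$ is not differentiable at $x$, then there must exist two distinct future‑directed forward calibrated curves issuing from $x$. Since $u$ is locally semiconcave when its time orientation is consistent (Theorem \ref{the7}), Lemma \ref{lem6} gives
\[
\nabla^+ u(x)=\mathrm{co}\,\nabla^* u(x),
\]
and a locally semiconcave function is differentiable at $x$ precisely when $\nabla^+u(x)$, equivalently $\nabla^* u(x)$, is a singleton. So assuming non‑differentiability, I can pick two distinct limiting gradients $V_1,V_2\in\nabla^*u(x)$.

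Next I would invoke Lemma \ref{lem2}, which shows that every element of $\nabla^*u(x)$ is a unit past‑directed timelike vector, so both $V_1$ and $V_2$ satisfy the hypotheses of Lemma \ref{lem21}. Applying Lemma \ref{lem21} to each $V_i$ gives a future‑directed unit‑speed timelike geodesic $\gamma_{x,V_i}:[0,c_i]\to M$ with
\[
\gamma_{x,V_i}(0)=x,\qquad u(\gamma_{x,V_i}(t))=u(x)+t,\qquad \dot{\gamma}_{x,V_i}(0)=-V_i.
\]
Then, as explained in Subsection \ref{subs5.2}, each of these segments can be extended to a future‑directed \emph{inextendible} forward calibrated ray starting at $x$, using Lemma \ref{lem22} together with the inductive extension procedure. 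Call the resulting inextendible curves $\eta_1,\eta_2:[0,T_i)\to M$.

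Finally I would note that $\dot{\eta}_1(0)=-V_1\neq -V_2=\dot{\eta}_2(0)$, so $\eta_1$ and $\eta_2$ differ on every interval $[0,\varepsilon]$ with $\varepsilon>0$; in particular they are distinct as forward calibrated curves through $x$ (any common restriction parameter $T\le \min\{T_1,T_2\}$ still leaves two different curves). This contradicts the assumed uniqueness of the forward calibrated curve $\gamma_x$. Hence $\nabla^*u(x)$ is a singleton and $u$ is differentiable at $x$.

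The only delicate point is verifying that the two germs produced by Lemma \ref{lem21} can both be extended into \emph{honest} inextendible forward calibrated curves of the type ruled out by the hypothesis, rather than being merely germs; but this is exactly the extension construction already carried out in Subsection \ref{subs5.2}, so no new work is required. The rest of the argument is purely a consequence of semiconcavity and the one‑to‑one correspondence, given by Lemma \ref{lem21}, between limiting gradients at $x$ and forward calibrated timelike geodesics starting at $x$.
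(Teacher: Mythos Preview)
Your argument is essentially the paper's own: use local semiconcavity (Theorem~\ref{the7}) together with Lemma~\ref{lem6} to deduce that non-differentiability at $x$ forces $\nabla^*u(x)$ to contain at least two distinct vectors, and then apply Lemma~\ref{lem21} to produce two distinct forward calibrated curves issuing from $x$, contradicting uniqueness. The paper stops at that point; your added extension to inextendible rays via Subsection~\ref{subs5.2} is harmless extra detail.

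One correction: Lemma~\ref{lem2} concerns the specific function $u^+$ constructed in Section~\ref{sec:2}, not an arbitrary $u\in\mathcal{S}(M)$, so it cannot be cited here. The fact you need---that every $V\in\nabla^*u(x)$ is a past-directed unit timelike vector---follows instead from (i) the standing assumption in this section that the time orientation of $u$ is past-directed, and (ii) the fact that any viscosity solution satisfies $g(\nabla u,\nabla u)=-1$ at each differentiable point, hence also at limiting gradients. With that substitution your proof is complete and matches the paper's.
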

\begin{proof}
 By Proposition \ref{prop4}, for any $x\in M$ there exists some $s\in R$ such that $u(x)=-d(x,u_s)$. Moreover, by Theorem \ref{the7}, $u(x)$ is a locally semiconcave function. Furthermore, $\nabla^+ u(x)\neq \emptyset $ for every $x\in M$. Suppose $u$ is non-differentiable at $x$, then $\nabla^+ u(x)$ is not a singleton. By Lemma \ref{lem6}, $\nabla^* u(x)$ is not a singleton as well. Finally, using Lemma \ref{lem21} the curves satisfying condition \eqref{E17} must not be unique. This completes the proof.\qed
\end{proof}

\begin{proposition}\label{prop3}
For any $x\in M$, let $\gamma_x(t): [0, T)\rightarrow M$ be a future-directed timelike curve with $\gamma_x(0)=x$, and
\begin{equation}\label{E19}
  u(\gamma_x(t))=u(\gamma_x(0))+t~~\mbox{for every}~~t\in [0, T),
\end{equation}
  then $u$ is differentiable at $\gamma_x(t)$ and $\dot{\gamma}_x(t)=-\nabla u(\gamma_x(t))$ for any $t\in (0, T)$.
\end{proposition}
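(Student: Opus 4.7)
My plan is to combine the local semiconcavity of $u$ from Theorem \ref{the7} with the equality case of the reverse Cauchy--Schwarz inequality for timelike vectors, applied to the canonical unit-speed forward calibrated ray $\gamma_x$ constructed in Subsection \ref{subs5.2}.

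Fix $t_0\in(0,T)$ and set $p:=\gamma_x(t_0)$. Local semiconcavity of $u$ together with Lemma \ref{lem6} ensures $\nabla^+u(p)\neq\emptyset$, and it suffices to show $\nabla^+u(p)=\{-\dot\gamma_x(t_0)\}$. For any $V\in\nabla^+u(p)$, select a $C^1$ upper support function $\phi$ near $p$ with $\phi(p)=u(p)$, $\phi\geq u$ near $p$, and $\nabla\phi(p)=V$. The calibration $u(\gamma_x(t))=u(p)+(t-t_0)$ combined with $\phi\geq u$ yields
\[
f(t):=\phi(\gamma_x(t))-\phi(p)-(t-t_0)\geq 0
\]
near $t_0$, with equality at $t_0$. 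Since $f$ is $C^1$ and $t_0$ is an interior minimum of $f$, one has $f'(t_0)=0$, giving the key pairing identity
\[
g(V,\dot\gamma_x(t_0))=1. \quad (\star)
\]

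I would then invoke the viscosity subsolution property $g(V,V)\leq -1$ together with the past-directed time orientation of $u$ to conclude that $V$ is past-directed timelike with $|V|\geq 1$. Since $\gamma_x$ is unit-speed by construction in Subsection \ref{subs5.2}, $\dot\gamma_x(t_0)$ is a future-directed unit timelike vector, lying in the opposite time cone from $V$. The reverse Cauchy--Schwarz inequality then gives
\[
1=g(V,\dot\gamma_x(t_0))\geq |V|\cdot|\dot\gamma_x(t_0)|=|V|\geq 1,
\]
forcing equality throughout. The equality case of reverse Cauchy--Schwarz then forces $V$ to be parallel to $-\dot\gamma_x(t_0)$ (with matching time orientation), and together with $|V|=1=|\dot\gamma_x(t_0)|$ this pins down $V=-\dot\gamma_x(t_0)$. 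Since $V\in\nabla^+u(p)$ was arbitrary, $\nabla^+u(p)=\{-\dot\gamma_x(t_0)\}$ is a singleton, and by Lemma \ref{lem6} this means $u$ is differentiable at $p$ with $\nabla u(p)=-\dot\gamma_x(t_0)$, which is exactly the conclusion of Proposition \ref{prop3}.

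The main obstacle is the first-order identity $(\star)$: it crucially uses that $t_0$ is an interior minimum of the support-inequality error $f$, which is why the claim is only stated for $t\in(0,T)$ and cannot be obtained at the boundary $t_0=0$ where one only has the one-sided bound $g(V,\dot\gamma_x(0))\geq 1$. A secondary subtlety, which is inherited from the construction in Subsection \ref{subs5.2}, is the unit-speed parametrization of the canonical $\gamma_x$; without it, the equality case of reverse Cauchy--Schwarz would only yield $V\parallel -\dot\gamma_x(t_0)$ up to scale rather than the sharp identification $V=-\dot\gamma_x(t_0)$.
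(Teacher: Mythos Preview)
Your proof is correct and takes a genuinely different route from the paper's.

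The paper argues by contradiction: assuming $u$ is not differentiable at $\gamma_x(s_0)$, it invokes Lemma \ref{lem21} to produce a second forward calibrated geodesic $\check\gamma$ issuing from $\gamma_x(s_0)$ with $\dot{\check\gamma}(s_0)\neq\dot\gamma_x(s_0)$, so that the concatenation $\check\gamma\ast\gamma_x|_{[0,s_0)}$ has a corner. Global hyperbolicity then furnishes a maximal timelike geodesic $\sigma$ from $x$ to $\check\gamma(s_1)$, and the strict reverse triangle inequality for broken timelike curves yields $d(x,\check\gamma(s_1))>s_1$; combined with the domination $u(\check\gamma(s_1))\geq u(x)+d(x,\check\gamma(s_1))$ this contradicts the calibration identity $u(\check\gamma(s_1))=u(x)+s_1$.

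Your argument is more direct and more analytic: you work entirely inside the superdifferential, using the two-sided calibration at an interior parameter to force the pairing $g(V,\dot\gamma_x(t_0))=1$ for every $V\in\nabla^+u(p)$, and then the subsolution bound $|V|\geq 1$ together with the equality case of reverse Cauchy--Schwarz pins down $V=-\dot\gamma_x(t_0)$. This bypasses Lemma \ref{lem21} and the corner-cutting step altogether, and in particular does not require producing a maximal geodesic between $x$ and a far point (which is where the paper tacitly uses global hyperbolicity of the ambient space). What the paper's geometric argument buys, on the other hand, is a transparent explanation of \emph{why} calibrated curves must be smooth geodesics: any corner could be shortcut, violating calibration. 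Both approaches rely on the unit-speed normalisation of $\gamma_x$ inherited from Subsection \ref{subs5.2}; you flag this correctly, and the paper's proof uses it as well when it writes $d(\hat\gamma(0),\hat\gamma(s_0))=s_0$.
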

\begin{proof}
If there is a $T>s_0>0$ such that $u$ is not differentiable at $\gamma_x(s_0)$, then $\nabla^* u(\gamma_x(s_0))$ is not a singleton. By Lemma \ref{lem21}, there exists a future-directed timelike geodesic $\check{\gamma}_{\gamma_x(s_0)}:[s_0, T_1)\rightarrow M$ with $\check{\gamma}_{\gamma_x(s_0)}(s_0)=\gamma_x(s_0)$, $\dot{\check{\gamma}}_{\gamma_x(s_0)}(s_0)\neq\dot{\gamma}_x(s_0)$ and
\begin{equation}\label{E20}
  u(\check{\gamma}_{\gamma_x(s_0)}(s))=u(\gamma_x(s_0))+(s-s_0)~~\mbox{for every}~~s\in [s_0, T_1).
\end{equation}
 For the sake of convenience, let $\hat{\gamma}(t)=\gamma_x(t)$, for any $t\geq 0$,
$\check{\gamma}(s)=\check{\gamma}_{\gamma_x}(s)$ for any $s\geq s_0$.
Since $\dot{\hat{\gamma}}(s_0)\neq\dot{\check{\gamma}}(s_0)$. Then the curve $\check{\gamma}\ast(\hat{\gamma}|_{[0,s_0)}):[0, T_1)\rightarrow M$ has a corner at $\hat{\gamma}(s_0)$, where the $\ast$ denotes the conjunction of curves. By the transitivity of chronological relation $\ll$ \cite[p.55]{Beem 1996}, for $0<s_0 <s_1<T_1$, we have $x=\hat{\gamma}(0)\ll \hat{\gamma}(s_0) \ll\check{\gamma}(s_1)$. Since $M$ is globally hyperbolic, there is a maximal future-directed timelike geodesic $\sigma:[0, s_1]\rightarrow M$ such that $\sigma(0)=x$, $\sigma(s_1)=\check{\gamma}(s_1)$. By the smoothness of geodesic and reverse triangle inequality,
\begin{equation}\label{E21}
  d(\sigma(0), \sigma(s_1))=d(\hat{\gamma}(0), \check{\gamma}(s_1))>d(\check{\gamma}(s_0), \check{\gamma}(s_1))+d(\hat{\gamma}(0), \hat{\gamma}(s_0))=s_1.
\end{equation}
By Lemma \ref{lem1} and inequality (\ref{E21}), we obtain
\begin{align}\label{E22}
  u(\check{\gamma}(s_1))&\geq u(x)+d(x, \check{\gamma}(s_1)) \nonumber\\
                          &>u(x)+s_1.
\end{align}
On the other hand, by (\ref{E19}) and (\ref{E20}), we have
\begin{align}\label{E23}
  u(\check{\gamma}(s_1))&=u(\check{\gamma}(s_0))+(s_1-s_0) \nonumber \\
                          &=u(x)+s_0-(s_1-s_0)  \nonumber \\
                          &=u(x)+s_1.
\end{align}
This contradiction proves Proposition \ref{prop3}.\qed
\end{proof}

\textit{Proof of Theorem \ref{the2}.}
If $u$ is differentiable at $x$, by Proposition \ref{prop3}, $u$ is differentiable everywhere on $\gamma_x(t)$ for any $t\in [0, T)$. By subsection \ref{subs5.2}, we have
\[
\nabla u(\gamma_x(t))=-\dot{\gamma}_x(t)
\]
for any $t\in [0, T)$. This is to say, if $u$ is differentiable at $x$, there is only one future-directed timelike ray $\gamma_x:[0,T)\rightarrow M$ with $\gamma_x(0)=x$, $\dot{\gamma}(0)=-\nabla u(x)$ and
\[
u(\gamma(0))=u(\gamma(s))-s~~\mbox{for any}~~s\in [0,T).
\]
Together with Proposition \ref{prop2}, this impiles the first conclusion of Theorem \ref{the2}. The last conclusion of Theorem \ref{the2} follows from Proposition \ref{prop3}. Up to now, the proof of Theorem \ref{the2} is complete.

 In the end of this section, we will prove Theorem \ref{prop1}. There are two cases we will consider one by one.

\textit{Case 1}: The time orientations  of $\phi$ and $\varphi$ are identical.

Firstly, by Theorem \ref{the7}, both $\phi$ and $\varphi$ are locally semiconcave. By \cite[Proposition 1.1.3]{CannarsaPNDE2004}, we know $\min\{\phi , \varphi\}$ is locally semiconcave on $M$. Thus by Remark \ref{viscosity and smc}, we only need to show that $\min\{\phi , \varphi\}$ satisfies the equation (\ref{E1}) at its differentiable points. Let $M_1:=\{x:\phi(x) \neq \varphi(x)\}$ and $M_2:=\{x:\phi(x) = \varphi(x)\}$. Denote the interior of $M_2$ by $\text{int}(M_2)$. Let $U:=M_1\cup \text{int}(M_2)$, then $\min\{\phi , \varphi\}$ satisfies the equation (\ref{E1}) at its differentiable points in $U$. Obviously, $U$ is an open and dense subset, by \cite[Proposition 3.3.4]{CannarsaPNDE2004}, $\min\{\phi , \varphi\}$ satisfies the equation (\ref{E1}) at any differentiable point. So far, we know that $\min\{\phi , \varphi\}$ is really a viscosity solution to equation (\ref{E1}).

\textit{Case 2}: The time orientations  of $\phi$ and $\varphi$ are different.

Using the methods above we can show that $\max\{\phi , \varphi\}$ satisfies equation (\ref{E1}) at its differentiable points. Recall that $M_1=\{x:\phi(x) \neq \varphi(x)\}$ and $M_2=\{x:\phi(x) = \varphi(x)\}$. Then we only need to show that $\max\{\phi , \varphi\}$ satisfies the definition of viscosity solution at $M_2$.
Without loss of generality, we can assume that the time orientation of $\phi$ is future-directed and the time orientation of $\varphi$ is past-directed.
For any $x\in M_2$, it is easy to see that $\max\{\phi , \varphi\}$ is not differentiable at $x$. By Lemma \ref{lem11}, for any future-directed causal curve $\gamma:(-T, T):\rightarrow M$ with $\gamma(0)=x$, we have $\max\{\phi , \varphi\}$ strictly  decreases along $\gamma(t)$ for  $-T<t<0$ and increases along $\gamma(t)$ for $0<t<T$.

Firstly, we will show that $\nabla^+ \max\{\phi , \varphi\}(x)=\emptyset$ for $x\in M_2$. Actually, for any $V\in \nabla^+ \max\{\phi , \varphi\}(x)$, by the definition of $\nabla^+$, we can find a neighborhood $O$ of $x$ and a function $\psi\in C^\infty(O)$ with $\max\{\phi , \varphi\}(y)\leq \psi(y)$ in $O$, $\max\{\phi , \varphi\}(x)= \psi(x)$ and $\nabla \psi(x)=V$. Consequently, for any given future-directed causal curve $\gamma:[-T, T]\rightarrow M$ with $\gamma(0)=x$, we have
\[
\psi(y)\geq \max\{\phi , \varphi\}(y)\geq\max\{\phi , \varphi\}(x)= \psi(x)
\]
for any $y\in O\cap\gamma(t)$. In other words, $\psi(x)$ is a local minimum along $\gamma(t)$. This means that $g(\nabla \psi(x), \dot{\gamma}(0))=0$ for any future-directed causal curve $\gamma$ with $\gamma(0)=x$. Thus, $\nabla \psi(x)=0$. Recall that there is a forward calibrated curve $\gamma_x:[0,T)\rightarrow M$ with respect to $\varphi$ such that
\[
\varphi(\gamma_x(t))- \varphi(\gamma_x(0))=t
\]
for any $t\in [0, T)$. By Lemma \ref{lem11}, we know the above equality holds true for $\max\{\phi , \varphi\}$, i.e.,
\[
\varphi(\gamma_x(t))- \varphi(\gamma_x(0))=\max\{\phi , \varphi\}(\gamma_x(t))- \max\{\phi , \varphi\}(\gamma_x(0))=t
\]
for any $t\in [0, T)$. Note that $\psi\circ\gamma_x$ is smooth function from $[0,T)$ to $R$. By the Taylor formula, when $t$ is sufficiently small, we have
\begin{align*}
t=\max\{\phi , \varphi\}(\gamma_x(t))- \max\{\phi , \varphi\}(\gamma_x(0))\leq \psi(\gamma_x(t))-\psi(\gamma_x(0))=d \psi(\dot{\gamma}_x(0))t+o(t^2)= o(t^2).
\end{align*}
This contradiction means that $\nabla^+ \max\{\phi , \varphi\}(x)=\emptyset$.

Therefore, to obtain the second conclusion of Proposition \ref{prop1}, it suffices to show that if a vector $V\in \nabla^- \max\{\phi , \varphi\}(x)$, then $g(V, V)\geq -1$.

Without losing generality, let us assume that there exists a past-directed timelike vector $\overline{V}$ in $\nabla^- \max\{\phi , \varphi\}(x)$ such that
\[
g(\overline{V}, \overline{V})<-1.
\]
We fix a suitable neighborhood $O$ of $x$, by the definition of $\nabla^-$, we can find a smooth function $\zeta\in C^\infty(O)$ such that $\zeta(y)\leq \max\{\phi , \varphi\}(y)$ for any $y\in O$, the equality holds at $y=x$ and moreover $\nabla \zeta(x)=\overline{V}$. By the similar analysis as above, for a forward calibrated curve $\gamma_x:[0,T)\rightarrow M$ with respect to $\varphi$, we have
\[
\max\{\phi , \varphi\}(\gamma_x(t))- \max\{\phi , \varphi\}(\gamma_x(0))=t
\]
for any $t\in [0, T)$. By the Taylor formula and wrong-way Cauchy-Schwarz inequality, when $t$ is sufficiently small, we have
\begin{align*}
t=\max\{\phi , \varphi\}(\gamma_x(t))- \max\{\phi , \varphi\}(\gamma_x(0))&\geq \zeta(\gamma_x(t))-\zeta(\gamma_x(0))\\
                                                                          &=d \zeta(\dot{\gamma}_x(0))t+o(t^2)\\
                                                                          &\geq |\overline{V}||\dot{\gamma}_x(0)|t+o(t^2)\\
                                                                          &= |\overline{V}|t+o(t^2).
\end{align*}
This contradiction indicates that if $V\in \nabla^- \max\{\phi , \varphi\}(x)$, then $g(V, V)\geq -1$.  Theorem \ref{prop1} is proved.         \qed


\section{Characterization of $C_u$}\label{sec:6}

Up to now, we have discussed the case when the time orientation of the viscosity solution is consistent. Our results show that when the time orientation of viscosity solution is consistent, the properties of the viscosity solution are essentially not much different from the case in Riemannian manifold. But when the time orientation of viscosity solution is non-consistent, viscosity solutions have many peculiar properties.

At the beginning of this section, we provide an example, which can illustrate the differences of viscosity solutions in the case of Riemannian manifold and space-time.

\textit{Example}. Let $(M, g)$ be the 2-dimensional Minkowski space-time, and $f(x,y)= \sqrt{2}|x|-|y|$. Clearly, $f$ is Lipschitz on $M$. By a simple calculation,
\begin{align*}
\nabla f= \left\{
\begin{array}{ll}
\displaystyle (\sqrt{2},  1)~~~~~~~&x>0, y>0,\\
\displaystyle (\sqrt{2}, -1)~~~~~&x>0, y<0,\\
\displaystyle (-\sqrt{2},  1)~~~~~&x<0, y>0,\\
\displaystyle (-\sqrt{2}, -1)~~~~~&x<0, y<0.
\end{array}\right.
\end{align*}
It is easy to check that $f$ is indeed a globally defined viscosity solution to equation (\ref{E1}). Note that $\nabla^+ f((0, 0))=\nabla^-f ((0, 0))=\emptyset$. See the Figure 2.

\begin{figure}\label{P1}
\begin{center}
\includegraphics[scale=0.3]{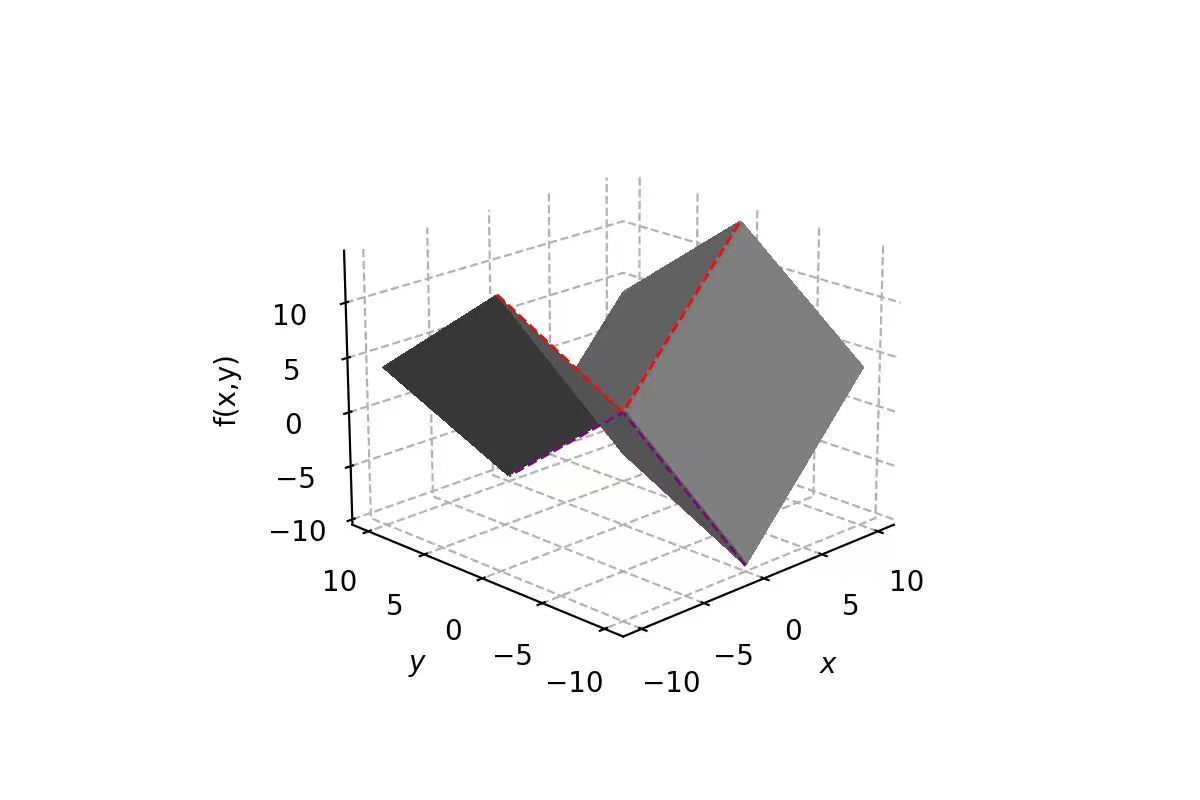}
\caption{A globally defined viscosity solution to equation (\ref{E1}) on the 2-dimensional Minkowski space-time.}
\end{center}
\end{figure}

\begin{lemma}\label{lem26}
For $u\in \mathcal{S}(M)$ and any $x\in M\setminus C_u$, there exists a neighborhood $U_x$ of $x$ such that the time orientation of $u$ is consistent on $U_x$.
\end{lemma}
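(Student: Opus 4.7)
The plan is to argue by contradiction, using the compactness built into the definition of $\nabla^*$ together with the openness of each timelike half-cone. First I would observe that every $V\in\nabla^*u(y)$ satisfies $g(V,V)=-1$: since $V$ is a limit of gradients $\nabla u(x_k)$ at differentiability points $x_k\to y$, and the viscosity equation forces $g(\nabla u(x_k),\nabla u(x_k))=-1$ at each such point (because $\nabla^+u(x_k)=\nabla^-u(x_k)=\{\nabla u(x_k)\}$ there), continuity of $g$ passes this identity to the limit. Consequently each limiting gradient is a nonzero timelike vector and therefore lies in exactly one of the two connected components of the timelike cone. For $x\in M\setminus C_u$, the definition of $C_u$ rules out $\nabla^*u(x)$ containing simultaneously a past-directed and a future-directed causal vector, so the entire set $\nabla^*u(x)$ lies in a single half-cone; without loss of generality I will assume every element of $\nabla^*u(x)$ is past-directed timelike.

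Next I would suppose, toward a contradiction, that no neighborhood of $x$ has consistent time orientation. Then there exist $y_n\to x$ and $V_n\in\nabla^*u(y_n)$ with each $V_n$ future-directed. By Definition \ref{Def6}, each $V_n$ is itself a limit of gradients $\nabla u(z_n^{(k)})$ at differentiability points $z_n^{(k)}\to y_n$, and a standard diagonal extraction yields a sequence $z_n\to x$ with $u$ differentiable at each $z_n$ and $\nabla u(z_n)$ within distance $1/n$ of $V_n$ in a fixed chart about $x$. Since the set of future-directed timelike vectors is open in the tangent bundle, $\nabla u(z_n)$ is itself future-directed timelike for all sufficiently large $n$. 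The remark immediately after Definition \ref{Def6} gives the chart-wise boundedness of $\{\nabla u(z_n)\}$, so along a subsequence $\nabla u(z_n)\to V$ for some vector $V$, which by construction lies in $\nabla^*u(x)$. Because the future-directed causal cone is closed in the tangent bundle and $g(V,V)=-1$ rules out both $V=0$ and $V$ lightlike, $V$ is forced to be future-directed timelike, directly contradicting the assumption that $\nabla^*u(x)$ contains only past-directed vectors.

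The only point requiring real care is the diagonal construction, namely verifying that the future orientation of $V_n$ truly transfers to the nearby differentiability points $z_n$; this uses nothing deeper than openness of the future-directed timelike cone in a chart, combined with the approximation $|\nabla u(z_n)-V_n|_h<1/n$. Everything else is a routine interplay between the continuity of $g$ (which propagates $g(V,V)=-1$ through limits) and the closedness of each causal half-cone (which prevents a limit of future-directed causal vectors from escaping into the past-directed cone). Granted these two facts, the lemma follows from a single contradiction argument, with no appeal to any structural property of $C_u$ beyond its defining formula.
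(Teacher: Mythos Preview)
Your proof is correct and follows essentially the same route as the paper's: both argue by contradiction, use the definition of $\nabla^*$ to pass from the approximating points $y_n$ to genuine differentiability points via a diagonal extraction, and then observe that the resulting limit lies in $\nabla^* u(x)$ with the wrong time orientation. Your version is in fact slightly more explicit than the paper's in justifying why the $\nabla u(z_n)$ inherit the future orientation (via openness of the timelike cone) and why the limit remains future-directed (via $g(V,V)=-1$ and closedness of the causal half-cone), points the paper passes over silently.
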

\begin{proof}
Without loss of generality, we assume that the time orientation of $u$ at $x$ is past-directed. If the conclusion is not true, then there exists a sequence $\{x_i\}$ converging to $x$ and a future-directed timelike vector $V_i\in \nabla^* u(x_i)$ for each $i$. By the definition of $\nabla^*$, for each $i$ there exists a sequence $x_i^n\rightarrow x_i$ as $n\rightarrow\infty$ such that $u$ is differentiable at every $x_i^n$ and $\nabla u(x_i^n)\rightarrow V_i$ as $n\rightarrow\infty$. By the diagonal method, we can select a subsequence $x_{n_i}$ satisfying $x_{n_i}\rightarrow x$ as $i\rightarrow\infty$. Moreover,  $u$ is differentiable at $x_{n_i}$ and $\nabla u(x_{n_i})$ is future-directed. Then the future-directed timelike vector $\lim\limits_{i\rightarrow\infty}\nabla u(x_{n_i})$ must belong to $\nabla^* u(x)$. This contradicts the time orientation of $u$ at $x$ is past-directed. \qed
\end{proof}

\textit{Proof of Theorem \ref{the5}.} The first three conclusions of Theorem \ref{the5} are immediate consequences of Lemma \ref{lem26}. Actually, since $u\in \mathcal{S}(M)$, $u$ is differentiable almost everywhere. This, together with Lemma \ref{lem26}, implies that $ M\setminus C_u$ is an open dense subset of $M$. Assume that $C_u\neq\emptyset$ and $M\setminus C_u$ is connected. For any $x\in C_u$, by the definition of $C_u$, there exist two differentiable points $y, z$ and a suitable neighborhood $U$ of $x$ such that $y,z\in U$ and the time orientation of $u$ at $y$ is different from the time orientation of $u$ at $z$. Obviously, $y, z\in M\setminus C_u$. Since $M\setminus C_u$ is connected, there exists a continuous curve $\alpha:[0, 1]\rightarrow M$ such that $\alpha(0)=y$, $\alpha(1)=z$ and $\alpha|_{[0,1]}\subseteq M\setminus C_u$. By Lemma \ref{lem26}, for each $0\leq t \leq 1$, there exists a neighborhood $U_{\alpha(t)}$ of $\alpha(t)$ such that the time orientation of $u$ is consistent on $U_{\alpha(t)}$. Since $\alpha|_{[0,1]}$ is compact and $M\setminus C_u$ is  open, we can choose a family of finite neighborhoods $\{U_{\alpha(t_i)}\}_i$ such that $\bigcup\limits_{i}U_{\alpha(t_i)}\subseteq M\setminus C_u$ and the time orientation of $u$ is consistent on $U_{\alpha(t_i)}$. This means that the time orientation of $u$ at $y$ is consistent with the time orientation of $u$ at $z$. This is a contradiction.

Note that $C_u$ is a subset of $\{x\in M|~ \text{$u$ is not differentiable at $x$}\}$, then $\mathcal{M}(C_u)=0$. Recall that an open set $U$ in a space-time is said to be causally convex if no causal curve intersects $U$ in a disconnected set. Since $(M, g)$ is globally hyperbolic, $(M, g)$ is strongly causal and consequently for any $x\in M$, $x$ has arbitrarily small casually convex neighborhoods. Then the open neighborhood $U_x$ in Lemma \ref{lem26} can be chosen to be a causally convex neighborhood. Obviously, $(U_x, g|_{U_x})$ is causal since $(M, g)$ is causal. For any $y, z\in U_x$ with $y\leq z$, all future-directed causal curves from $y$ to $z$ must stay in $U_x$, i.e., $J^+(y)\cap J^-(z) \subseteq U_x$ if $y\leq z$. Since $J^+(y)\cap J^-(z)$ is compact in $M$, $J^+(y)\cap J^-(z)$ is compact with respect to the induced topology in $U_x$.  In a word, the space-time $(U_x, g|_{U_x})$ is globally hyperbolic. By Lemma \ref{lem26}, the time orientation of $u$ is consistent on $(U_x, g|_{U_x})$. Therefore the third conclusion of  Theorem \ref{the5} is clearly valid.

In the following, we will prove  the last conclusion of Theorem \ref{the5}. we firstly claim that the time orientation of $u$ is consistent on $J^+(x)\setminus \{x\}$ and $J^-(x)\setminus \{x\}$ respectively. For any $y\in J^+(x)\setminus \{x\}$, since $C_u$ is an acausal set, thus $y \notin C_u$ and consequently, the time orientation of $y$ is either future-directed or past-directed. Assume that there exist $y_0, y_1\in J^+(x)\setminus \{x\}$ such that the time orientation of $u$ at $y_0$ is different from the time orientation of $u$ at $y_1$. By the path connectedness of $J^+(x)\setminus \{x\}$, there is a continuous curve $\beta: [0 ,1]\rightarrow J^+(x)\setminus \{x\} $ such that $\beta(0)=y_0$ and $\beta(1)=y_1$. Meanwhile, by Lemma \ref{lem26}, for each $0\leq t \leq 1$, there exists a neighborhood $U_{\beta(t)}$ of $\beta(t)$ such that the time orientation of $u$ is consistent on $U_{\beta(t)}$. By the same argument above, we can choose a family of finite neighborhoods $\{U_{\beta(t_i)}\}_i$ such that $\bigcup\limits_{i}U_{\beta(t_i)}\subseteq M\setminus C_u$ and the time orientation of $u$ is consistent on $U_{\beta(t_i)}$. This means that the time orientation of $u$ at $y_0$ is consistent with the time orientation of $u$ at $y_1$, which contradicts our assumption. Then, the time orientation of $u$ is consistent on $J^+(x)\setminus \{x\}$. Similarly, the time orientation of $u$ is consistent on $J^-(x)\setminus \{x\}$.

Assume that $\text{edge} (C_u)\neq\emptyset$. By the definition of $\text{edge}$, for $x\in \text{edge} (C_u)$ and any causal convex neighborhood $U$ of $x$, there exists a timelike curve $\gamma$ from $I^-(x, U)$ to $I^+(x, U)$ such that $\gamma\cap C_u=\emptyset$. By the similar analysis above, the time orientation of $u$ is consistent on $I^-(x)\cup I^+(x)$. Recall that $C_u$ is a closed subset of $M$, then $x\in \text{edge} (C_u)$ implies $x\in C_u$.  By the definition of $C_u$, for any neighborhood $U_1$ of $x$, there exist points $y_2, y_3\in U_1\setminus C_u$ such that the time orientation of $u$ at $y_2$ is different from the time orientation of $u$ at $y_3$. We fix $z_1\in I^-(x, U)$, $z_2\in  I^+(x, U)$ and choose $U_1$ such that $U_1 \subseteq J^+(z_1)\cap J^-(z_2)$. Then by  the transitivity of chronological relation $\ll$, there exist timelike curves $\gamma_2, \gamma_3$ from $I^-(x, U)$ to $I^+(x, U)$ which passes through $y_2, y_3$ respectively. Since $C_u$ is an acausal set, $\gamma_2$ intersects $C_u$ at most one point. Moreover, $y_2 \notin C_u\cap \gamma_2$. Then the time orientation of $u$ at $y_2$ is consistent with  the time orientation of $u$ on either $I^-(x)$ or $I^+(x)$. Similarly, the time orientation of $u$ at $y_3$ is consistent with  the time orientation of $u$ on either $I^-(x)$ or $I^+(x)$. Since the time orientations of $u$ are the same as on $I^-(x)$ and $I^+(x)$, $y_2, y_3$ must belong to the same connected component of $M\setminus C_u$. This contradicts the selection of $y_2$ and $y_3$. Thus, $\text{edge} (C_u)=\emptyset$ and consequently $C_u$ is a partial Cauchy surface. The proof of Theorem \ref{the5} is complete.

\begin{remark}
Applying Theorem \ref{the5} to the function $\max\{\phi,\varphi\}$ in Theorem \ref{prop1}, it is easy to see that 
\[
C_{\max\{\phi,\varphi\}}=\{x\in M|~ \phi(x)=\varphi(x)\}
\]
is a partial Cauchy surface.
\end{remark}

\section*{Acknowledgments}

We sincerely thank Professor Jin Liang for the helpful advice.
Xiaojun Cui and Siyao Zhu are supported by the National Natural Science
Foundation of China (Grant No. 12171234).
Hongguang Wu is supported by the National Natural Science Foundation of China (Grant no. 12201073).

\end{document}